\newcommand{\norm}[1]{\left\lVert#1\right\rVert}
\newtheorem{theorem}{Theorem}[section]
\newtheorem{corollary}[theorem]{Corollary}
\newtheorem{lemma}[theorem]{Lemma}
\newtheorem{definition}[theorem]{Definition}
\newtheorem{example}[theorem]{Example}
\theoremstyle{remark}
\newtheorem{question}[theorem]{Question}
\newtheorem*{remark}{Remark}
\newcommand{\veps}{\varepsilon}
\newcommand{\CC}{\mathbb C}
\newcommand{\RR}{\mathbb R}
\newcommand{\DD}{\mathbb D}
\newcommand{\TT}{\mathbb T}
\newcommand{\NN}{\mathbb N}
\newcommand{\ZZ}{\mathbb Z}
\newcommand{\ovdd}{\overline{\DD}^2}
\newcommand{\ovdt}{\overline{\DD}^3}
\newcommand{\ovD}{\overline{\DD}^d}
\newcommand{\m}[1]{\vert #1 \vert}
\newcommand{\TTI}{\TT^{\m{I}}}
\newcommand{\tk}{\theta_k}
\newcommand{\rk}{\rho_k}
\newcommand{\tl}{\theta_l}
\newcommand{\vb}[1]{V_{\beta_#1}}
\newcommand{\sed}{S(\eta,\ovd)}
\newcommand{\eio}{e^{i\theta}}
\newcommand{\dss}{\displaystyle\sum}
\newcommand{\Ab}{A_\beta^2(\DD^3 )}
\newcommand{\tu}{\theta_1}
\newcommand{\td}{\theta_2}
\newcommand{\tr}{\theta_3}
\newcommand{\I}[1]{\lbrace 1,\dots,#1\rbrace}
\newcommand{\Si}{S_I(\eta,\ovd)}
\newcommand{\Abd}{A^2_\beta(\DD^d)}
\newcommand{\Abdi}[1]{A^2_{\beta_{#1}}(\DD^d)}
\newcommand{\wik}{\omega_{I,k}(\ovd)}
\newcommand{\dprod}{\displaystyle\prod}
\def\Imm{\mathrm{Im}}
\def\Ree{\mathrm{Re}}
\def\ovd{\overline{\delta}}
\author{Frédéric Bayart}
\address{Laboratoire de Math\'ematiques Blaise Pascal UMR 6620 CNRS, Universit\'e Clermont Auvergne, Campus universitaire des C\'ezeaux, 3 place Vasarely, 63178 Aubi\`ere Cedex, France.}
\email{frederic.bayart@uca.fr}
\author{Anne Dorval}
\address{Laboratoire de Math\'ematiques Blaise Pascal UMR 6620 CNRS, Universit\'e Clermont Auvergne, Campus universitaire des C\'ezeaux, 3 place Vasarely, 63178 Aubi\`ere Cedex, France.}
\email{anne.dorval@uca.fr}
\thanks{The authors are partially supported by the grant ANR-24-CE40-0892-01 of the French National Research
		Agency ANR}
\title[Composition Operators]{Composition Operators on weighted Bergman spaces of the polydisc}
\date{}
\begin{document}
\begin{abstract}
We study composition operators between weighted Bergman spaces of the polydisc induced by smooth symbols. We prove a general result of continuity which only involves the behaviour of the symbol on the polytorus. We deduce from this several consequences about the automatic continuity of the induced operator. We study in depth the case of the tridisc and exhibit several examples showing that a characterization of continuity using only derivatives seems impossible.
\end{abstract}
\maketitle

\section{Introduction}

Let $\mathcal U$ be a domain in $\CC^d$, let $X$ be a Banach space of holomorphic functions on $\mathcal U$ and let $\phi:\mathcal U\to\mathcal U$
be holomorphic (we will write $\phi\in\mathcal O(\mathcal U,\mathcal U)$). The composition operator with symbol $\phi$ is defined by $C_\phi(f)=f\circ\phi,$ $f\in X$. The first question to solve when studying composition operators is that of continuity: for which symbols $\phi$ does $C_\phi$ induce a bounded composition operator on $X$? When $X$ is the Hardy space or a weighted Bergman space of the unit disc $\DD,$ the answer is easy: by the Littlewood
subordination principle, this is always the case. 

The situation breaks down dramatically on the polydisc $\DD^d,$ $d\geq 2,$ where there are simple examples of symbols $\phi$ such that $C_\phi$ does not map $H^2(\DD^d)$ or any Bergman space $A_\beta^2(\DD^d)$ into itself. 
The papers \cite{BAYPOLY,KSZ08,SZ06b,SZ06a} contain some general results as well as a detailed study of the case of affine symbols. 

It turns out that studying the continuity of composition operators on Hardy and Bergman spaces of the bidisc and the tridisc already becomes an issue. Regarding the bidisc, a characterization of the symbols $\phi:\DD^2\to\DD^2$ which are regular up to the boundary inducing a bounded composition operator on $A^2(\DD^2)$ was given in \cite{BAYPOLY}. This characterization was extended to $H^2(\DD^2)$ and to all Bergman spaces $A^2_\beta(\DD^2)$ in \cite{Ko22}. Precisely, let $\phi\in \mathcal O(\DD^2,\DD^2)\cap \mathcal C^2(\overline{\DD}^2)$ and let $\beta>-1.$ Then $C_\phi$ is bounded on $A^2_\beta(\DD^2)$ (resp. on $H^2(\DD^2)$) if and only if $d\phi(\xi)$ is invertible for all $\xi\in\TT^2$ such that $\phi(\xi)\in\TT^2$.
Hence, to study the continuity of $C_\phi$ on Hardy or Bergman spaces of the bidisc, it is sufficient to know conditions on the first order derivative for any $\xi\in\TT^2$ such that $\phi(\xi)$ belongs to the boundary of $\DD^2$. Moreover, this shows that if $C_\phi$ is continuous on one $A_\beta^2(\DD^2)$, then it is continuous on all $A_\beta^2(\DD^2).$
We also mention the recent papers \cite{Bes25b,Bes26,Bes25a,CCS25,KL25} on composition operators on the bidisc.

The problem of continuity is trickier on the tridisc. In \cite{Ko22}, a characterization of holomorphic symbols $\phi:\DD^3\to\DD^3$ which are $\mathcal C^2$-smooth on $\overline{\DD}^3$ inducing a bounded composition operator on $A^2(\DD^3)$ is given. This characterization again only involves conditions on the first-order derivative of $\phi$. A similar characterization was given in \cite{Baytridisc} for the Hardy space $H^2(\DD^3)$. It now involves conditions on the second-order derivatives and it is shown that first-order derivatives are not sufficient: for a map 
$\phi:\overline{\mathbb D}^d\to\overline{\mathbb D}^d$ we denote by $\mathcal E(\phi)$ the set $\{z\in\overline{\DD}^d:\ \phi(z)\in\partial {\overline{\DD}^d}\}.$
Then one can construct two holomorphic functions $\phi,\psi\in\mathcal O(\DD^3,\DD^3)\cap \mathcal C^\infty(\overline{\DD}^3)$ such that
\begin{itemize}
    \item $\mathcal E(\phi)=\mathcal E(\psi)$.
    \item For all $\xi\in\mathcal E(\phi),$ $\phi(\xi)=\psi(\xi)$ and $\nabla \phi(\xi)=\nabla \psi(\xi)$.
    \item $C_\phi$ is continuous on $H^2(\DD^3)$ whereas $C_\psi$ is not.
\end{itemize}

In this paper we investigate composition operators $C_\phi$ on general weighted Bergman spaces $A^2_{\beta}(\DD^d)$ when the symbol $\phi$ is regular up to the boundary. Our first main result, Theorem \ref{result_gen},
gives a necessary and sufficient condition for $C_\phi$ to map boundedly $A^2_{\beta_1}(\DD^d)$ into $A^2_{\beta_2}(\DD^d).$ This condition only involves boundary values of $\phi,$ contrary to the usual condition based on Carleson measures. 
It has several interesting corollaries, like a stability result: if $C_\phi$ is continuous on $A^2_{\beta_1}(\DD^d)$, then it is continuous on $A^2_{\beta_2}(\DD^d)$ for all $\beta_2\geq\beta_1$ (see Theorem \ref{thm:stability} below).

Then one concentrates on Bergman spaces of the tridisc. In view of the results of \cite{Baytridisc} and \cite{Ko22}, one could expect that conditions on the second-order derivatives of $\phi$ would be sufficient to characterize the boundedness of $C_\phi$ on any $A^2_\beta(\DD^3).$ This conjecture is far from being true: for all $n\geq 1,$ even the knowledge of the first $n$-th order derivatives will not be enough to characterize the continuity of $C_\phi$ on all $A^2_\beta(\DD^3)$. 

\begin{theorem}\label{thm:nthderivative}
    Let $n\geq 1$. There exist $\phi,\psi\in\mathcal O(\DD^3,\DD^3)\cap\mathcal C^\infty(\ovdt)$ and $\beta\in(-1,0)$ such that 
    \begin{itemize}
\item $\mathcal E(\phi)=\mathcal E(\psi)$.
\item For any $\xi\in\mathcal E(\phi),$ for any $\alpha\in\mathbb Z_+^3$ with $|\alpha|\leq n,$ $\partial^\alpha \phi(\xi)=\partial^\alpha \psi(\xi)$.
\item $C_\phi$ is continuous on $A_\beta^2(\DD^3)$ and $C_\psi$ is not continuous on $A_\beta^2(\DD^3)$.
\end{itemize}
\end{theorem}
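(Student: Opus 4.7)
The plan is to upgrade the first-order counterexample of \cite{Baytridisc} (which handles $n=1$ for $H^2(\DD^3)$) by introducing a ``contact-order'' parameter that can be raised above $n$, while simultaneously pushing the Bergman weight $\beta$ toward $-1$ so that the boundary criterion of Theorem \ref{result_gen} still discriminates between the two symbols.

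I would first fix a model symbol $\phi \in \mathcal O(\DD^3,\DD^3) \cap \mathcal C^\infty(\ovdt)$ whose contact set $\mathcal E(\phi)$ is a thin piece of $\TT^3$ (say a single point or a real-analytic curve) and for which the integral condition from Theorem \ref{result_gen} can be evaluated explicitly, showing that $C_\phi$ is continuous on $A_\beta^2(\DD^3)$ for every $\beta$ in some open interval $(-1,0)$. Then, following \cite{Baytridisc}, I would produce a holomorphic perturbation $h \in \mathcal O(\DD^3)\cap\mathcal C^\infty(\ovdt)$ vanishing on $\mathcal E(\phi)$ such that the naive candidate $\psi_0 := \phi + h$ already violates the same criterion for some admissible $\beta$, so that $C_{\psi_0}$ fails to be continuous on $A_\beta^2(\DD^3)$.

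The new ingredient for general $n$ is to multiply $h$ by a holomorphic factor $\Theta_k$ vanishing to order $k$ on $\mathcal E(\phi)$ (for instance a suitable power of a holomorphic defining function for $\mathcal E(\phi)$), and to set $\psi := \phi + \Theta_k h$ with $k > n$. By the Leibniz rule, $\partial^\alpha(\psi - \phi) \equiv 0$ on $\mathcal E(\phi)$ for every $\alpha \in \ZZ_+^3$ with $|\alpha| \leq n$; after rescaling $\Theta_k$ by a small constant if necessary, $\psi$ still maps $\DD^3$ into $\DD^3$ and $\mathcal E(\psi) = \mathcal E(\phi)$, so the first two bullets of the statement are satisfied. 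The smoothness up to the closed tridisc is preserved because $\Theta_k$ is a polynomial (or at any rate $\mathcal C^\infty$) and $h$ was already smooth.

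The final step is to re-apply Theorem \ref{result_gen} to $\psi$: the factor $\Theta_k$ damps the relevant integrand near $\mathcal E(\phi)$ by an additional power of $\dist(\cdot,\mathcal E(\phi))^{2k}$, so in order to keep the integral divergent one must compensate by lowering $\beta$, while making sure that $\beta > -1$. The main obstacle is precisely this quantitative bookkeeping: one has to verify that for every $n$ there is a window of weights $\beta \in (-1,0)$ inside which $\phi$ still satisfies the Theorem \ref{result_gen} condition (so $C_\phi$ is continuous) but $\psi$ violates it (so $C_\psi$ is not), and this requires tracking how the vanishing rate of $\Theta_k$ along $\mathcal E(\phi)$ compares to the intrinsic singularity carried by $h$, uniformly in the parameters. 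Once this balance is struck, the stability corollary of Theorem \ref{result_gen} mentioned in the introduction ensures the continuity of $C_\phi$ on $A_\beta^2$ for all $\beta$ in the chosen window, and the construction is complete.
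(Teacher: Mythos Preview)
Your proposal outlines a perturbative strategy but leaves the decisive step unresolved: you yourself identify ``this quantitative bookkeeping'' as the main obstacle, and you never carry it out. The tension you describe is real and works against you. Multiplying the perturbation $h$ by a factor $\Theta_k$ vanishing to order $k>n$ on $\mathcal E(\phi)$ forces $\psi-\phi$ to be $O(\dist(\cdot,\mathcal E(\phi))^k)$ on $\TT^3$, so near the contact set $\psi$ behaves increasingly like $\phi$. For the boundary volume criterion of Theorem~\ref{result_gen} to distinguish them, the perturbation must still dominate at the scale $\delta$ where the estimate is tested; you give no mechanism ensuring this survives the damping by $\Theta_k$, nor do you explain why rescaling $\Theta_k$ by a small constant (needed to keep $\psi(\DD^3)\subset\DD^3$) does not kill the effect entirely. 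As written, the proposal is an outline of a hope, not a proof.

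The paper's route is completely different and avoids the perturbative balance altogether. It constructs two explicit one-variable self-maps $h_n$ and $g_{2n}$ of $\overline\DD$, both touching $\TT$ only at $1$, which satisfy $h_n^{(k)}(1)=g_{2n}^{(k)}(1)$ for all $k\leq n$ but have different tangential contact orders on the circle: $|h_n(e^{i\theta})|=1-c|\theta|^{2n}+o(|\theta|^{2n})$ versus $|g_{2n}(e^{i\theta})|=1-c'|\theta|^{4n}+o(|\theta|^{4n})$. Plugging these into the symbol shape $(\varphi(z_1)z_2z_3,\varphi(z_1)z_2z_3,0)$ and invoking Corollary~\ref{coro} gives immediately that $C_\phi$ is bounded on $A_\beta^2(\DD^3)$ iff $\beta\geq -1/(2n)$ while $C_\psi$ is bounded iff $\beta\geq -1/(4n)$, so any $\beta\in[-1/(2n),-1/(4n))$ works. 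The point is that matching finitely many derivatives at a boundary point does not pin down the order of contact of $|\varphi(e^{i\theta})|$ with $1$, and the latter is what Corollary~\ref{coro} reads off.
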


Moreover, two different values of $\beta\in[-1,0]$ require two different characterizations of the boundedness of composition operators, as the following result indicates.

\begin{theorem}\label{thm:weights}
    Let $-1\leq \beta_1<\beta_2\leq 0.$ Then there exists $\phi\in\mathcal O(\DD^3,\DD^3)\cap \mathcal C^1(\overline{\DD}^3)$ such that $C_\phi$ is continuous on $A^2_{\beta_2}(\DD^3)$ and $C_\phi$ is not continuous on $A^2_{\beta_1}(\DD^3).$ 
\end{theorem}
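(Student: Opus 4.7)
By the stability result of Theorem \ref{thm:stability}, continuity of $C_\phi$ propagates upward in $\beta$, so for each prescribed pair $\beta_1<\beta_2$ in $[-1,0]$ it is enough to exhibit a single $\mathcal C^1$-smooth symbol whose continuity threshold sits strictly between $\beta_1$ and $\beta_2$. My plan is to build a one-parameter family $\{\phi_\alpha\}$ of holomorphic maps $\DD^3\to\DD^3$ in $\mathcal C^1(\overline{\DD}^3)$ whose threshold $\beta_{\mathrm{crit}}(\alpha)$ is a continuous and strictly monotone function of $\alpha$, whose image covers the entire interval $[-1,0]$, and then to specialise $\alpha$.

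The natural candidate is
$$
\phi_\alpha(z_1,z_2,z_3) \;=\; \bigl(\,1 - \kappa\,\rho(z)^{\alpha},\ z_2,\ z_3\,\bigr),
$$
where $\rho$ is holomorphic on $\DD^3$, continuous on $\overline{\DD}^3$, and vanishes only at a chosen boundary point $\xi_0\in\TT^3$. A concrete choice with $\xi_0=(1,1,1)$ is $\rho(z) = 3-z_1-z_2-z_3$, whose real part is strictly positive on $\overline{\DD}^3\setminus\{\xi_0\}$, so the principal branch of $\rho^\alpha$ is well defined and holomorphic. The constant $\kappa>0$ is chosen small enough that $\phi_\alpha(\DD^3)\subset\DD^3$, and the exceptional set $\mathcal E(\phi_\alpha)$ then reduces essentially to $\xi_0$. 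For $\alpha\geq 1$ the symbol is automatically $\mathcal C^1$ up to the boundary; for smaller values of $\alpha$ that may be required to reach the full interval $[-1,0]$, I will replace $\rho$ by $\rho^N$ for an appropriate integer $N$ (or embed the fractional power inside a product of smooth factors), so that $(\rho^N)^\alpha$ stays $\mathcal C^1$ while the effective boundary exponent at $\xi_0$ remains tunable.

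The next step is to insert $\phi_\alpha$ into the integral criterion supplied by Theorem \ref{result_gen}, which replaces the usual Carleson-measure test by a condition on the boundary values of $\phi$. After the standard polar change of variables $1-\xi_j = \rho_j e^{i\theta_j}$, the integral localises to a neighbourhood of $\xi_0$ and factors into a product of elementary one-dimensional power-type integrals whose exponents are affine functions of $\alpha$ and $\beta$. A direct computation then produces a continuous, strictly monotone threshold function $\beta_{\mathrm{crit}}(\alpha)$ whose image contains $[-1,0]$. Choosing $\alpha$ so that $\beta_{\mathrm{crit}}(\alpha)\in(\beta_1,\beta_2)$ concludes the argument: the integral condition is satisfied at $\beta_2$ and fails at $\beta_1$, hence $C_{\phi_\alpha}$ is bounded on $A^2_{\beta_2}(\DD^3)$ but not on $A^2_{\beta_1}(\DD^3)$.

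The main obstacle is the tension between the $\mathcal C^1$-regularity required by the statement and the fractional boundary exponent needed to sweep the whole interval $[-1,0]$. A bare power $\rho^\alpha$ with $\alpha<1$ is only Hölder, so the singular exponent must be transferred onto a combination of smooth factors or onto an anisotropic substitution in the three variables; verifying that this regularisation neither destroys the prescribed behaviour near $\xi_0$ nor prevents the threshold curve $\beta_{\mathrm{crit}}(\alpha)$ from covering all of $[-1,0]$ is the delicate technical step of the proof.
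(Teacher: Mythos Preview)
Your construction has a fatal gap: the proposed first coordinate $1-\kappa\rho(z)^\alpha$ is \emph{not} a holomorphic map $\DD^3\to\DD$ for any $\alpha>1$, no matter how small $\kappa>0$. Near $\xi_0=(1,1,1)$ with $z_j=e^{i\theta_j}$ one has $\rho(z)\sim -i(\theta_1+\theta_2+\theta_3)$, so $\arg(\rho^\alpha)\approx -\alpha\pi/2$ and hence $\Ree(\rho^\alpha)<0$ for $1<\alpha<3$; then $|1-\kappa\rho^\alpha|^2 = 1-2\kappa\Ree(\rho^\alpha)+\kappa^2|\rho|^{2\alpha}>1$ at such boundary points. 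Equivalently, Lemma~\ref{JC} forbids a nonconstant map in $\mathcal O(\DD^3,\DD)\cap\mathcal C^1(\overline{\DD}^3)$ from having a vanishing gradient at a point where it touches $\TT$, yet $\nabla(\rho^\alpha)(\xi_0)=0$ whenever $\alpha>1$. Thus the only $\mathcal C^1$ regime of your family, $\alpha\ge 1$, is exactly where $\phi_\alpha$ fails to be a self-map of $\DD^3$ (and for $\alpha=1$ the map is affine with $d\phi_\alpha(\xi_0)$ invertible, producing no nontrivial threshold). Replacing $\rho$ by $\rho^N$ changes nothing, since $(\rho^N)^\alpha=\rho^{N\alpha}$ faces the same obstruction; the ``anisotropic substitution'' is left entirely unspecified and does not constitute a proof.

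The paper resolves the regularity/fractional-exponent tension in one complex variable rather than three: given $\kappa>1$, it builds, via the Riemann map onto a suitable Jordan domain together with the Kellogg--Warschawski boundary-regularity theorem, a map $\varphi\in\mathcal O(\DD,\DD)\cap\mathcal C^1(\overline\DD)$ with $|\varphi(e^{i\theta})|=1-c|\theta|^\kappa+o(|\theta|^\kappa)$ and $|\varphi|=1$ only at $1$. Setting $\phi(z)=(\varphi(z_1)z_2z_3,\varphi(z_1)z_2z_3,0)$, Corollary~\ref{coro} (a special case of Theorem~\ref{thm:examples}) shows that $C_\phi$ is bounded on $A^2_\beta(\DD^3)$ if and only if $\beta\ge -1/\kappa$; choosing $\kappa$ with $-1/\kappa\in(\beta_1,\beta_2)$ finishes the argument.
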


Hence it seems impossible to characterize in full generality the continuity of $C_\phi$ on $A_\beta^2(\DD^3),$ $\beta\in(-1,0)$. Nevertheless, we do in Section \ref{sec:tridisc} an in-depth study to understand what can happen when we have 
a full knowledge of the second-order derivative. This allows us to give plenty of examples in Section \ref{sec:examples}. In Section \ref{sec:bidisc}, we come back 
to composition operators on the bidisc and we are interested in the following problem: let $\beta_1>-1$; for which values of $\beta_2\geq\beta_1$ does there exist $\phi\in \mathcal O(\DD^2,\DD^2)\cap\mathcal C^1(\overline{\DD}^2)$ such that $C_\phi$ maps $A_{\beta_1}(\DD^2)$ into $A_{\beta}(\DD^2)$ if and only if $\beta\geq \beta_2.$ We show that this is the case if and only if $\beta_2\in[\beta_1,\beta_1+1/2]$ or $\beta_2=2\beta_1+2.$ This answers a question 
of \cite{KL25}.


\section{Some general results}
\subsection{Notations}Let us start by introducing some notation that will be used throughout this paper. The unit vector $(1,\dots,1)$ will be denoted by $e$. 
For $\theta=(\theta_1,\dots,\theta_d)\in\RR^d,$ we will write $e^{i\theta}$ for $(e^{i\theta_1},\dots,e^{i\theta_d}).$ Any $z\in\overline{\DD}^d$ such that $z_k\neq 0$ for all $k$ will be uniquely written $((1-\rho_k)e^{i\theta_k})$
with $\theta_k\in[-\pi,\pi)$ and $\rho_k\in[0,1).$ 

For  $f : \overline{\DD}^d \rightarrow A$, $\theta \in \RR^d$ and $\rho\in[0,1]^d$, we write $f(\theta)$ for $f(e^{i\tu},\dots,e^{i\theta_d})$ and $f(\rho,\theta)$ for $f(z)$ with $z=((1-\rho_k)e^{i\theta_k}).$
If $f$ maps $A$ into $\CC^d,$ the map $f_I$, where $I=\{i_1,\dots,i_p\}\subset \{1,\dots,d\}$, will denote $(f_{i_1},\dots,f_{i_p}):A\to\CC^p.$ 
For $f,g : A \rightarrow \RR$, we shall write ${f\lesssim g} $ provided there exists $C>0$ such that, for all $x \in A,\ f(x) \leq Cg(x).$ When we write $f\simeq g,$ we mean $f\lesssim g$ and $g\lesssim f$. 

The Lebesgue measure on $\RR^d$ will be denoted $\lambda_d$ and the normalized surface measure on $\TT^d$ will be denoted $\sigma_d$. Let $dA$ denote the normalized area measure on $\DD$. For $\beta >-1$, we put $dA_\beta(z) = (\beta+1)(1-\m{z}^2)^\beta dA(z)$. On the polydisc $\DD^d$, we define 
$$dV_\beta(z) = dA_\beta(z_1)\cdots dA_\beta(z_d), \hspace{5mm} z=(z_1,\dots,z_d) \in \DD^d.$$
For $\beta>-1,$ the Bergman space $\Abd$ is the function space defined by 
$$ \Abd = \left\lbrace f \in \mathcal{O}(\DD^d,\DD^d):\ \norm{f}_2 = \int_{\DD^d}\m{f(z)}^2dV_\beta(z)<+\infty \right\rbrace,$$ and the Hardy space $H^2(\DD^d)$ is defined by 
$$ H^2(\DD^d) = \left\lbrace f \in \mathcal{O}(\DD^d,\DD^d):\  \norm{f} = \sup\limits_{0<r<1}\int_{\TT^d}\m{f(re^{i\theta})}^2d\sigma_d(z) <+\infty \right\rbrace.$$ 
For simplicity, we will sometimes write $A^2_{-1}(\DD^d)$ for $H^2(\DD^d).$ 

For $\eta \in \TT^d$ and $\ovd \in (0,2)^d$, the Carleson box is the set $${\sed = \lbrace z \in \DD^d :\ \m{z_j-\eta_j}<\delta_j,\ j=1,\dots,d\rbrace}.$$ If $I\subset \{1,\dots,d\}$, $S_I(\eta,\overline\delta)$
will mean $\lbrace z \in \DD^d :\ \m{z_j-\eta_j}<\delta_j,\ j\in I\rbrace$.

\subsection{Technical lemmas}
In this subsection, we collect several lemmas which will be used throughout the proofs. We start with two geometric lemmas. The first one is the parametrized Morse lemma (see \cite[Section 4.44]{BG92} for instance).
\begin{lemma}\label{Morse}
Let $f : \RR^n \times \RR^{d-n} \rightarrow  \RR$ be a $\mathcal{C}^3-$function such that $f(0) = 0$, $d_yf(0) = 0 $ and $d^2_yf(0)$ is non-singular. There exist a neighbourhood $U$ of $0$ in $\RR^d $, a diffeomorphism $\Gamma  : U \rightarrow \RR^d$, $(x,y) \mapsto (x,\gamma(x,y))$ and a $\mathcal{C}^1-$map $h : \mathbb{R}^n \rightarrow \RR$ such that for any $(x,y) \in U,$
$$f(x,y) = \sum\limits_{j=1}^{d-n}\pm\gamma_j(x,y)^2+h(x).$$
\end{lemma}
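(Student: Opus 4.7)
The plan is to use the classical parametrized Morse argument, carried out in three stages: build a $\mathcal{C}^2$ critical section $y=\mu(x)$ by the implicit function theorem, expand $f$ around this section as a quadratic form in $y$ with $\mathcal{C}^1$ coefficients, and diagonalize that quadratic form via Lagrange reduction (iterated completion of the square).

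For the first stage, I consider the map $(x,y)\mapsto d_y f(x,y)$ from $\RR^n\times\RR^{d-n}$ to $\RR^{d-n}$. It vanishes at $0$ and its $y$-differential at $0$ is the invertible matrix $d^2_y f(0)$, so the implicit function theorem produces a $\mathcal{C}^2$-map $\mu : V \to \RR^{d-n}$, with $V$ a neighbourhood of $0$ in $\RR^n$ and $\mu(0)=0$, such that $d_y f(x,\mu(x))=0$ on $V$. I set $h(x):=f(x,\mu(x))$. Taylor's formula with integral remainder applied to $y\mapsto f(x,y+\mu(x))$ at $y=0$ then yields
$$f(x,y+\mu(x))-h(x)=\sum_{i,j=1}^{d-n} a_{ij}(x,y)\,y_iy_j,$$
where $a_{ij}(x,y)=\int_0^1(1-t)\,\partial_{y_i}\partial_{y_j}f\bigl(x,ty+\mu(x)\bigr)\,dt$. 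Here the second partial derivatives of $f$ are $\mathcal{C}^1$, composed with the $\mathcal{C}^2$ map $(x,y)\mapsto(x,ty+\mu(x))$, hence each $a_{ij}$ is $\mathcal{C}^1$. After symmetrizing, the matrix $A(x,y)=(a_{ij}(x,y))$ is symmetric with $A(0,0)=\tfrac12 d^2_y f(0)$ invertible, so $A$ stays invertible in a neighbourhood of $0$.

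For the diagonalization, after a fixed linear change of $y$-coordinates one may assume that $A(0,0)=\diag(\varepsilon_1,\dots,\varepsilon_{d-n})$ with $\varepsilon_j\in\{\pm1\}$. I then proceed inductively: having reached
$$f(x,y+\mu(x))-h(x)=\sum_{j<k}\varepsilon_j\gamma_j(x,y)^2+\sum_{i,j\geq k}b_{ij}^{(k)}(x,y)\,y_iy_j$$
with $\mathcal{C}^1$ coefficients $b_{ij}^{(k)}$ satisfying $b_{kk}^{(k)}(0,0)=\varepsilon_k$, I define
$$\gamma_k(x,y):=\sqrt{|b_{kk}^{(k)}(x,y)|}\left(y_k+\sum_{j>k}\frac{b_{kj}^{(k)}(x,y)}{b_{kk}^{(k)}(x,y)}\,y_j\right).$$
This is $\mathcal{C}^1$ because $b_{kk}^{(k)}$ has constant sign near $0$, and completing the square shows that $\varepsilon_k\gamma_k^2$ absorbs exactly the terms of the sum involving $y_k$, leaving a $\mathcal{C}^1$ quadratic form in $(y_{k+1},\ldots,y_{d-n})$ with coefficients $b_{ij}^{(k+1)}=b_{ij}^{(k)}-b_{ki}^{(k)}b_{kj}^{(k)}/b_{kk}^{(k)}$; at the origin these remain $\diag(\varepsilon_{k+1},\ldots,\varepsilon_{d-n})$ since the off-diagonal entries of $A(0,0)$ are already zero. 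After $d-n$ iterations one obtains $f(x,y+\mu(x))=\sum_j\varepsilon_j\gamma_j(x,y)^2+h(x)$, and the map $\Gamma(x,y):=(x,\gamma_1(x,y-\mu(x)),\dots,\gamma_{d-n}(x,y-\mu(x)))$ provides the claimed formula after translating $y$ by $-\mu(x)$.

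Finally, $d\Gamma(0)$ is block upper triangular with identity $(x,x)$-block, and its $(y,y)$-block is, after the initial linear change of coordinates, upper unipotent with diagonal entries $\sqrt{|\varepsilon_k|}=1$ (because $\gamma_k$ only involves $y_k,\ldots,y_{d-n}$). The inverse function theorem therefore provides a local $\mathcal{C}^1$ diffeomorphism. I expect that the genuinely delicate part of the proof is the inductive bookkeeping in Lagrange's reduction: one must verify that substituting $\gamma_k$ in place of $y_k$ produces a residual expression which is again a quadratic form in the remaining variables with $\mathcal{C}^1$ coefficients, and that the next pivot $b_{k+1,k+1}^{(k+1)}(0,0)=\varepsilon_{k+1}$ remains nonzero. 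Both follow from writing the substitution as a congruence of $A$ by a lower triangular unipotent matrix with $\mathcal{C}^1$ entries, which amounts to an elementary algebraic identity.
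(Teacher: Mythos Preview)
Your argument is the standard proof of the parametrized Morse lemma and is correct. Note that the paper does not actually prove this lemma: it merely states it with a reference to \cite[Section~4.44]{BG92}, so there is no ``paper's own proof'' to compare against. Your three-stage scheme (implicit function theorem for the critical section $\mu$, Taylor expansion with integral remainder to extract a $\mathcal{C}^1$ quadratic form, and iterated completion of the square) is exactly the classical approach found in that reference and in Milnor's \emph{Morse Theory}.

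One minor imprecision: you write that ``$\gamma_k$ only involves $y_k,\dots,y_{d-n}$'' to justify the triangular structure of $d\Gamma(0)$. Strictly speaking this is false, since the coefficients $b_{ij}^{(k)}(x,y)$ depend on \emph{all} the $y$-variables. What is true, and sufficient, is that at the origin the contributions coming from differentiating the coefficients vanish (because each is multiplied by some $y_j$, which is zero there), while $b_{kj}^{(k)}(0,0)=0$ for $j\neq k$; hence $\partial_y\gamma(0,0)$ is in fact the identity. Your conclusion that $d\Gamma(0)$ is invertible is therefore correct, but the stated reason should be sharpened.
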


\noindent The second one is a kind of Julia-Caratheodory theorem in several variables (see, in particular, \cite[Lemma 2.7]{BAYPOLY}).
\begin{lemma}\label{JC}
Let $\varphi\in\mathcal O( {\DD^d},{\DD})\cap\mathcal C^1(\overline{\DD}^d)$ and $\xi\in\TT^d.$ Assume that $\varphi(\xi) = 1$. Then for all $k \in \I{d},$ $\dfrac{\partial\varphi}{\partial z_k}(\xi) \geq 0$. Moreover, if $\dfrac{\partial\varphi}{\partial z_k}(\xi) = 0$, then $\varphi$ does not depend on $z_k$. 
\end{lemma}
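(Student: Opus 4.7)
The plan is to reduce to the classical (one-variable) Julia--Carath\'eodory theorem by restricting $\varphi$ to one-complex-dimensional slices. Composing $\varphi$ with the unitary rotation $z\mapsto(\xi_1 z_1,\dots,\xi_d z_d)$ lets me assume $\xi=e=(1,\dots,1)$; the statement then reduces to $\partial_k\varphi(e)\ge 0$, and $\partial_k\varphi(e)=0$ forcing $\varphi$ to be independent of $z_k$.

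For each $k$, I introduce the slice function $f_k(w)=\varphi(1,\dots,1,w,1,\dots,1)$ with $w$ in the $k$-th position, defined on $\overline{\DD}$ via the continuous extension of $\varphi$. Writing $f_k$ as the uniform-on-compacta limit, as $r\to 1^-$, of the holomorphic maps $w\mapsto\varphi(r,\dots,r,w,r,\dots,r)$ shows that $f_k$ is holomorphic on $\DD$, continuous on $\overline{\DD}$, sends $\DD$ into $\overline{\DD}$, and satisfies $f_k(1)=1$. By the maximum modulus principle, either $f_k\equiv 1$ (in which case $f_k'(1)=0$) or $f_k(\DD)\subset\DD$; in the latter case the classical Julia--Carath\'eodory theorem together with the $\mathcal C^1$-regularity of $\varphi$ yields $f_k'(1)=\partial_k\varphi(e)\ge 0$. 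This establishes the first assertion.

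For the rigidity statement, I assume $\partial_k\varphi(e)=0$. The one-variable Julia inequality rules out the alternative $f_k(\DD)\subset\DD$---the angular derivative at $1$ would otherwise be strictly positive, contradicting the vanishing assumption---so necessarily $f_k\equiv 1$, that is, $\varphi\equiv 1$ on the analytic disc $\Delta_k=\{(1,\dots,w,\dots,1):w\in\overline{\DD}\}\subset\partial\DD^d$. To propagate this slice rigidity into global non-dependence of $\varphi$ on $z_k$, my plan is: for each $j\ne k$ and each $w\in\overline{\DD}$, apply one-variable Julia--Carath\'eodory to the slice $z_j\mapsto\varphi(1,\dots,z_j,\dots,w,\dots,1)$, which has boundary value $1$ at $z_j=1$ precisely because $\varphi\equiv 1$ on $\Delta_k$. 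This yields that $\partial_{z_j}\varphi(1,\dots,w,\dots,1)$ is real and nonnegative; as a holomorphic function of $w\in\DD$ taking values in $[0,\infty)$, it must be constant in $w$. Iterating this rigidity in each of the remaining coordinate directions should then force $\partial_{z_k}\varphi\equiv 0$ throughout $\DD^d$, proving the claim.

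The main obstacle is precisely this final propagation step. The disc $\Delta_k$ has only one complex dimension inside the $d$-real-dimensional boundary $\partial\DD^d$, so the vanishing of a holomorphic function on $\Delta_k$ does not a priori entail its global vanishing; the argument must genuinely exploit the bound $|\varphi|\le 1$ through a multi-variable Schwarz--Julia type rigidity on the polydisc.
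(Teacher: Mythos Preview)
The paper does not give its own proof of this lemma; it simply cites \cite[Lemma~2.7]{BAYPOLY}. So there is no in-paper argument to compare your attempt against, and your proposal should be judged on its own.

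Your treatment of the first assertion (nonnegativity of $\partial_k\varphi(\xi)$) is correct: after rotating so that $\xi=e$, the slice $f_k(w)=\varphi(1,\dots,w,\dots,1)$ is a holomorphic self-map of $\overline{\DD}$ with $f_k(1)=1$, and the classical one-variable Julia--Carath\'eodory theorem (together with the $\mathcal C^1$ regularity) yields $\partial_k\varphi(e)=f_k'(1)\ge 0$.

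For the rigidity part you correctly deduce that $\partial_k\varphi(e)=0$ forces $f_k\equiv 1$, so $\varphi\equiv 1$ on the disc $\Delta_k=\{1\}^{k-1}\times\overline{\DD}\times\{1\}^{d-k}$. The subsequent step --- showing that for each $j\ne k$ the function $w\mapsto\partial_j\varphi(1,\dots,w,\dots,1)$ is a nonnegative holomorphic function of $w$, hence constant --- is also correct. The gap, which you yourself flag as the ``main obstacle'', is the final propagation. Knowing that $\varphi$ and all $\partial_j\varphi$ ($j\ne k$) are constant along the single one-complex-dimensional disc $\Delta_k\subset\partial\DD^d$ does \emph{not} by itself force $\partial_k\varphi\equiv 0$ on $\DD^d$: $\Delta_k$ is contained in the boundary, so no identity principle applies, and the information you have collected is first-order along a codimension-$(d-1)$ set. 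The phrase ``iterating this rigidity in each of the remaining coordinate directions should then force $\partial_{z_k}\varphi\equiv 0$'' is not an argument --- it is precisely the point that needs a new idea. One cannot simply iterate single-variable Julia on slices through boundary points and hope to cover an open set of $\DD^d$; some genuinely multi-variable input (for instance the polydisc Julia inequality used in \cite{BAYPOLY}, or an argument via the Herglotz representation of the positive-real-part function $\frac{1+\varphi}{1-\varphi}$) is required to close the gap. As written, the proof of the second assertion is incomplete.
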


Our next lemma will help us to provide examples of self-maps of the polydisc with prescribed derivatives. 
\begin{lemma}\label{fonction}Let $\psi$ be defined by $\psi(z) = 1+\dfrac{z-1}{2}-\dfrac{(z-1)^2}{8}+\dfrac{3}{128}(z-1)^3$ for $z \in \mathbb{D}$. Then $\psi(\overline{\mathbb{D}}) \subset \overline{\mathbb{D}}$ and $\vert \psi(z) \vert  = 1$ if and only if $z=1$. 
\end{lemma}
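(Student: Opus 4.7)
The plan is to use the maximum modulus principle to reduce the question to the boundary. Since $\psi$ is a non-constant polynomial, for any $z \in \DD$ one has $|\psi(z)| < \max_{w \in \TT} |\psi(w)|$, so it is enough to prove that $|\psi(e^{i\theta})| \leq 1$ for every $\theta \in \RR$, with equality if and only if $\theta \equiv 0 \pmod{2\pi}$.

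To compute $|\psi|^2$ on the circle, the first step is to rewrite $\psi$ as an ordinary polynomial in $z$: expanding gives
\[
\psi(z) = \frac{1}{128}\bigl(45 + 105\,z - 25\,z^2 + 3\,z^3\bigr).
\]
Since $\bar z = 1/z$ on $\TT$, one has $|\psi(e^{i\theta})|^2 = \psi(z)\,\psi(1/z)$, a Laurent polynomial invariant under $z \mapsto 1/z$. Multiplying the two polynomials and collecting pairs $z^k + z^{-k}$ yields
\[
128^2\,|\psi(e^{i\theta})|^2 = 13684 + 4050\cos\theta - 1620\cos 2\theta + 270\cos 3\theta.
\]

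The final step is to re-express this in terms of $t := |e^{i\theta}-1|^2 = 2 - 2\cos\theta \in [0,4]$. Using $\cos\theta = 1 - t/2$, $\cos 2\theta = 1 - 2t + t^2/2$ and $\cos 3\theta = 1 - 9t/2 + 3t^2 - t^3/2$, a pleasant cancellation occurs: the coefficients of $t$ and of $t^2$ vanish, and one is left with the clean identity
\[
|\psi(e^{i\theta})|^2 = 1 - \frac{135}{16384}\,t^3.
\]
Because $t \geq 0$, this immediately gives $|\psi(e^{i\theta})| \leq 1$, with equality if and only if $t = 0$, that is, $z = 1$. Combined with the maximum modulus reduction, this proves both $\psi(\overline{\DD}) \subset \overline{\DD}$ and the fact that $|\psi(z)|=1$ for $z \in \overline{\DD}$ forces $z=1$. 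The only genuine obstacle is the arithmetic cancellation of the $t$ and $t^2$ terms; this is not a coincidence, but reflects the careful choice of the coefficient $3/128$, as opposed to the value $1/16$ that would arise from simply truncating the Taylor expansion of $\sqrt{z}$ at $z=1$ to third order.
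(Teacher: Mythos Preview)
Your proof is correct and follows essentially the same route as the paper: both reduce to the boundary via the maximum modulus principle, compute $|\psi(e^{i\theta})|^2$ as a trigonometric polynomial, and recognize that it equals $1$ plus a constant multiple of $(\cos\theta-1)^3$. Your substitution $t=|e^{i\theta}-1|^2=2-2\cos\theta$ is equivalent to the paper's factorization $(\cos\theta-1)^3\leq 0$, and your closed form $|\psi(e^{i\theta})|^2=1-\tfrac{135}{16384}\,t^3$ is precisely the identity the paper invokes later when it needs the sixth-order behaviour of $|\psi|$ near $\theta=0$.
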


\begin{proof}
Indeed, let $z= e^{i\theta} \in \mathbb{T}.$ After some computations, we find
\begin{align*}
\m{\psi(\theta )}^2 & = \dfrac{1}{16384}(2(135\cos(3\theta)-810\cos(2\theta)+2025\cos(\theta))+13684)\\
& = \dfrac{1}{16384}(2(540\cos^3(\theta)-1620\cos^2(\theta)+1620\cos(\theta))+15304)
\end{align*}
Then, 
\begin{align*}
\m{\psi(\theta)}^2 \leq 1\hspace*{6mm}  & \Leftrightarrow \hspace*{6mm}  2(540\cos^3(\theta)-1620\cos^2(\theta)+1620\cos(\theta)) \leq 1080 \\
 &\Leftrightarrow \hspace*{6mm}   \cos^3(\theta)-3\cos^2(\theta)+3\cos(\theta)\leq 1\\
 &\Leftrightarrow \hspace*{6mm}  (\cos(\theta)-1)^3 \leq 0.
\end{align*}
The last inequality is always satisfied with equality if and only if $\theta \equiv 0\ [2\pi]$. 
\end{proof}

We will also need to estimate the volume of certain sets to characterize  continuity. This is done in the following lemmas: 

\begin{lemma}\label{equiv}
Let $\beta >-1$. There exist $C_1,C_2>0$ such that, for all measurable subsets $A$ of $\mathbb{R}^d$ and for all $\ovd \in (0,1)^d,$
$$ V_\beta(\lbrace ((1-\rk)e^{i\tk})_{k=1,\dots,d}: 0\leq \rho_k\leq \delta_k,\ \theta \in A \rbrace)\simeq (\delta_1\cdots \delta_d)^{(1+\beta)}\lambda_d(A).$$
\end{lemma}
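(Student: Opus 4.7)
The plan is to unfold the definition of $V_\beta$ as a product of one-dimensional weighted area measures, perform the change of variables $z_k=(1-\rho_k)e^{i\theta_k}$ in each factor, and reduce the statement to a one-variable estimate on the weight. Since $dA_\beta(z_k)=(\beta+1)(1-|z_k|^2)^\beta dA(z_k)$ and in polar coordinates $dA(z_k)=\tfrac{1}{\pi}|z_k|\,d|z_k|\,d\theta_k$, the substitution $|z_k|=1-\rho_k$ and $1-|z_k|^2=\rho_k(2-\rho_k)$ turns $dA_\beta$ into $\tfrac{\beta+1}{\pi}\rho_k^\beta(2-\rho_k)^\beta(1-\rho_k)\,d\rho_k\,d\theta_k$. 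By Fubini the quantity on the left of the claim equals
$$\lambda_d(A)\,\prod_{k=1}^d\frac{\beta+1}{\pi}\int_0^{\delta_k}\rho_k^\beta(2-\rho_k)^\beta(1-\rho_k)\,d\rho_k,$$
(after assuming, as is implicit, that $A$ is contained in a fundamental domain such as $[-\pi,\pi)^d$). So everything reduces to showing that for each $k$,
$$I(\delta_k):=\int_0^{\delta_k}\rho_k^\beta(2-\rho_k)^\beta(1-\rho_k)\,d\rho_k\;\simeq\;\delta_k^{1+\beta},$$
with constants independent of $\delta_k\in(0,1)$.

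For the upper bound I would simply bound the factor $(2-\rho_k)^\beta(1-\rho_k)$ by a constant on $[0,1]$ (the quantity is bounded since $\beta>-1$ means $(2-\rho_k)^\beta$ stays within a compact interval, and $(1-\rho_k)\le 1$), yielding $I(\delta_k)\lesssim \int_0^{\delta_k}\rho_k^\beta\,d\rho_k=\delta_k^{1+\beta}/(1+\beta)$. The lower bound, which is where the only minor subtlety lies, I would split into two regimes. If $\delta_k\le 1/2$, then on $[0,\delta_k]$ both $(2-\rho_k)^\beta$ and $(1-\rho_k)$ are bounded below by positive absolute constants, giving $I(\delta_k)\gtrsim\int_0^{\delta_k}\rho_k^\beta\,d\rho_k\simeq\delta_k^{1+\beta}$. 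If instead $\delta_k>1/2$, we drop down to $I(\delta_k)\ge I(1/2)$, a fixed positive constant, while simultaneously $\delta_k^{1+\beta}\le 1$ since $1+\beta>0$ and $\delta_k<1$; hence again $I(\delta_k)\gtrsim \delta_k^{1+\beta}$.

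Taking the product over $k=1,\dots,d$ and multiplying by $\lambda_d(A)$ yields the claimed two-sided estimate with constants $C_1,C_2$ depending only on $\beta$ and $d$. The main (very mild) obstacle is the factor $(1-\rho_k)$ in the Jacobian, which is not bounded away from $0$ uniformly in $\delta_k\in(0,1)$; the two-case split above handles this, and otherwise the proof is a routine Fubini calculation.
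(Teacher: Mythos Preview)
Your proof is correct and follows essentially the same approach as the paper's: separate variables by Fubini and reduce to a one-variable estimate $\int_{1-\delta_k}^1(\beta+1)(1-r^2)^\beta r\,dr\simeq\delta_k^{1+\beta}$. The only difference is cosmetic: the paper evaluates this integral exactly via the antiderivative $-\tfrac12(1-r^2)^{\beta+1}$, obtaining the closed form $2^\beta\delta_k^{\beta+1}(1-\delta_k/2)^{\beta+1}$, from which the two-sided bound is immediate; you instead bound the integrand directly and split into the cases $\delta_k\le1/2$ and $\delta_k>1/2$, which is perfectly fine but slightly less efficient.
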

\begin{proof}
First, let us consider 
\begin{align*}
    \displaystyle\int_{1-\delta_i}^1(\beta+1)(1-r^2)^\beta rdr &= \left[ \dfrac{-(1-r^2)^{\beta+1}}{2}\right]_{1-\delta_i}^1 = 2^\beta\delta_i^{\beta+1}\left(1-\frac{\delta_i}{2}\right)^{\beta+1} \underset{0}{\sim} 2^\beta \delta_i^{\beta+1}.
\end{align*}
Thus, there exist $c_1,c_2>0$ such that 
$$c_1\delta_i^{\beta+1} \leq \displaystyle\int_{1-\delta_i}^1(\beta+1)(1-r^2)^\beta rdr  \leq c_2\delta_i^{\beta+1}.$$
We conclude using Fubini's theorem. 
\end{proof}
\begin{remark} A look at the proof shows that the involved constants are independent of $\beta$ provided $\beta\in(-1,0]$ (see also \cite[Lemma 3.6]{Baytridisc}).
\end{remark}

\begin{lemma}\label{techniq}
Let $O$ be a bounded open subset of $\mathbb{R}^2$. Then there exists $C>0$ such that for all $a \in \mathbb{R}$ and all $\delta\in(0,e^{-1})$, we have 
$$\lambda_2(\lbrace (x,y)\in O:\ \m{x^2-y^2 - a} < \delta\rbrace)\leq -C\delta\log(\delta).$$
\end{lemma}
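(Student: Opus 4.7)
The plan is to reduce the problem to a hyperbolic-slab estimate via the linear change of variables $(u,v)=(x+y,x-y),$ which has constant Jacobian $1/2$ and turns the quadratic form $x^2-y^2$ into the product $uv.$ Since $O$ is bounded, its image $O'$ under this diffeomorphism is contained in some square $[-M,M]^2.$ Hence it suffices to find $C>0$ such that
\[
\lambda_2\bigl(\{(u,v)\in[-M,M]^2:\ |uv-a|<\delta\}\bigr)\leq -C\delta\log(\delta)
\]
uniformly in $a\in\RR$ and $\delta\in(0,e^{-1}).$

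Next, I would apply Fubini's theorem and slice along the $u$-axis. For each fixed $u\neq 0,$ the condition $|uv-a|<\delta$ forces $v$ to lie in an interval of length $2\delta/|u|,$ independently of $a.$ Intersecting with $O'\subset[-M,M]^2$ yields a $v$-slice of length at most $\min\bigl(2M,\,2\delta/|u|\bigr).$ The two regimes balance when $|u|=\delta/M,$ so splitting the integral at this threshold gives
\[
\lambda_2(\{|uv-a|<\delta\}\cap O')
\leq \int_{|u|\leq \delta/M}\!\!2M\,du+\int_{\delta/M\leq|u|\leq M}\!\!\frac{2\delta}{|u|}\,du
= 4\delta+4\delta\log\!\Bigl(\tfrac{M^2}{\delta}\Bigr).
\]

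Finally, for $\delta\in(0,e^{-1})$ one has $\log(1/\delta)>1,$ so $\log(M^2/\delta)=2\log M+\log(1/\delta)\leq (2\log M+1)\log(1/\delta)$ (when $M>1,$ otherwise absorb $\log M$ into the constant). Combining everything produces a bound of the form $C\delta\log(1/\delta)=-C\delta\log(\delta)$ with $C$ depending only on $O.$ There is no real obstacle here; the only mild subtlety is correctly handling the slice near $u=0,$ where the hyperbolic bound $2\delta/|u|$ blows up, by cutting it off using the boundedness of $O'$ at the threshold $|u|=\delta/M$—this is exactly what produces the logarithmic factor.
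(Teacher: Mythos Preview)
Your proof is correct and in fact cleaner than the paper's. Both arguments begin with the same linear change of variables $(u,v)=(x\pm y,\,x\mp y)$ turning $x^2-y^2$ into $uv$ and reducing to the set $\{(u,v)\in[-M,M]^2:\ |uv-a|<\delta\}$. After that, the paper distinguishes the cases $a\geq\delta$ and $a<\delta$ and in each case decomposes the set into two or three explicit regions whose areas it computes separately. Your observation that for each fixed $u\neq 0$ the $v$-slice has length at most $\min(2M,\,2\delta/|u|)$ \emph{independently of $a$} bypasses this case split entirely: a single Fubini computation with the cutoff at $|u|=\delta/M$ gives the bound uniformly in $a$. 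The paper's route tracks the dependence on $a$ more explicitly, but yours is shorter and makes the uniformity in $a$ more transparent.
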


\begin{proof}
We can assume that $a>0$ and $O = [-M/2,M/2]^2$ for some $M>1$.  Let $(x,y) \in O$ and set $u=x-y$ and $v=x+y$. Then, $(u,v)\in[-M,M]^2$ and
$$\m{x^2-y^2-a} <\delta \Leftrightarrow \m{uv-a}<\delta.$$
In particular, if the set $\{(x,y)\in O:\ |x^2-y^2-a|<\delta\}$ 
is nonempty, then $a$ has to be smaller than $M^2+\delta,$
an assumption that we now make. By symmetry, one only has to estimate the measure of 
$$\mathcal E=\{(u,v)\in[0,M]\times[-M,M]:\ |uv-a|<\delta\}.$$
Assume first that $a\geq\delta$ which implies $\mathcal E\subset[0,+\infty)^2.$ Using $0\leq v\leq M$ and $(a-\delta)/u\leq v\leq (a+\delta)/u,$ we find
that $\mathcal E\subset\mathcal E_1\cup\mathcal E_2$ with 
\begin{align*}
\mathcal E_1&=\left\{(u,v)\in\RR^2:\ \frac{a-\delta}M\leq u\leq \frac{a+\delta}M,\ 0\leq v\leq M\right\}\\
\mathcal E_2&=\left\{(u,v)\in\RR^2:\ \frac{a+\delta}M\leq u\leq M+2,\ \frac{a-\delta}u\leq v\leq\frac{a+\delta}u\right\}
\end{align*}
(we have taken here $M+2$ instead of $M$ to be sure that $M+2>(a+\delta)/M$).
Therefore, 
\begin{align*}
    \lambda_2(\mathcal E)&\leq 2\delta+\int_{(a+\delta)/M}^{M+2}\frac{2\delta}udu\\
    &\leq 2\delta+2\delta\log(M+2)-2\delta\log(a+\delta)+2\delta\log(M)\\
    &\leq -C\delta\log(\delta).
\end{align*}
On the other hand, if $a<\delta,$ then we write $\mathcal E\subset \mathcal E'_1\cup\mathcal E'_2\cup\mathcal E'_3$ with
\begin{align*}
\mathcal E'_1&=\left\{(u,v)\in\RR^2:\ 0\leq u\leq \frac{\delta-a}M,\ -M\leq v\leq M\right\}\\
\mathcal E'_2&=\left\{(u,v)\in\RR^2:\ \frac{\delta-a}M\leq u\leq \frac{a+\delta}M,\ \frac{a-\delta}u\leq v\leq M\right\}\\
\mathcal E'_3&=\left\{(u,v)\in\RR^2:\ \frac{a+\delta}M\leq u\leq M+2,\ \frac{a-\delta}u\leq v\leq \frac{a+\delta}u\right\}
\end{align*}
so that 
\begin{align*}
\lambda_2(\mathcal E)&\leq 2(\delta-a)+\int_{\frac{\delta-a}M}^{\frac{\delta+a}M}\left(M+\frac{\delta-a}u\right)du+2\delta\log(M+2)-2\delta\log(a+\delta)+2\delta\log(M)\\
&\leq 2\delta+(\delta-a)\log\left(\frac{\delta+a}{\delta-a}\right)+4\delta\log(M+2)-2\delta\log(a+\delta)\\
&\leq -C\delta\log(\delta)
\end{align*}
since $x\mapsto -x\log(x)$ is decreasing on $(0,e^{-1})$ and $(\delta-a)\log(\delta+a)\leq 0.$
\end{proof}

\subsection{How to prove continuity}
It is well known that the continuity of composition operators on Hardy and Bergman spaces of the polydisc is linked to volume estimates of the preimage of Carleson sets. As a consequence of \cite[Theorem 2.5]{Jaf91}, for $\beta_1,\beta_2>-1$, we know that $C_\phi$ maps boundedly $A^2_{\beta_1}(\DD^d)$ into $A^2_{\beta_2}(\DD^d)$ if and only if
\begin{equation}\label{eq:carleson}
\exists C>0,\ \forall \eta\in\TT^d,\ \forall \ovd\in(0,1)^d,\ V_{\beta_2}(\phi^{-1}(S(\eta,\ovd)))\leq C V_{\beta_1}(S(\eta,\ovd)).
\end{equation}
This means that $V_{\beta_2}\circ\phi^{-1}$ is a Carleson measure for $A_{\beta_1}^2(\DD^d).$ Recall that
$$V_{\beta_1}(S(\eta,\ovd))\simeq \prod_{i=1}^d \delta_i^{2+\beta_1}.$$
This characterization extends to $\beta_2=-1$ (that is, replacing $A_{\beta_2}(\DD^d)$ by $H^2(\DD^d)$), with the measure $\lambda_d\circ\phi^{-1}$ instead of
$V_{\beta_2}\circ\phi^{-1}.$ It becomes more complicated for $\beta_1=-1$
since Carleson measures on Hardy spaces of the polydisc are difficult objects. The condition 
$$V_{\beta_2}(\phi^{-1}(S(\eta,\ovd)))\leq C\prod_{i=1}^d \delta_i$$
is still necessary but not sufficient. To prove continuity, a strategy first devised in \cite{BAYPOLY} is to prove that $C_\phi$ maps continuously $A^2_{\beta_1}(\DD^d)$ into $A^2_{\beta_2}(\DD^d)$ with $\|C_\phi\|_{\mathcal L(A_{\beta_1}^2,A_{\beta_2}^2)}\leq C$ with $C$ independent of $\beta_1$ and to allow $\beta_1$ to $-1$. For this we will need the following result (see \cite[Proposition 9.3]{BAYPOLY}): 

\begin{lemma}\label{lem:independentconstant}
There exists a constant $C(d)$ depending only on the dimension $d$ such that, for all $\phi\in\mathcal O(\DD^d,\DD^d)$, all $\beta_1\in(-1,0]$ and  all $\beta_2>-1,$ if \eqref{eq:carleson} is satisfied, then $\|C_\phi\|_{\mathcal L(A_{\beta_1}^2,A^2_{\beta_2})}\leq C(d)\cdot C.$
\end{lemma}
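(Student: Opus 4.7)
The plan is to interpret the continuity of $C_\phi:A_{\beta_1}^2(\DD^d)\to A_{\beta_2}^2(\DD^d)$ as a Carleson-type embedding for the pullback measure $\mu:=V_{\beta_2}\circ\phi^{-1}$, namely
$$\|C_\phi f\|_{A^2_{\beta_2}}^2=\int_{\DD^d}|f|^2\,d\mu\lesssim C\cdot\|f\|_{A^2_{\beta_1}}^2,$$
with an implicit constant depending only on $d$; the claimed operator-norm bound then follows. The approach is a classical dyadic-tent decomposition of $\DD^d$, with special care given to the dependence of every constant on the weight $\beta_1$.

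Concretely, I would partition $\DD^d$ into tents $T_{k,j}$ indexed by $k=(k_1,\dots,k_d)\in\ZZ_+^d$ and multi-indices $j$ with $0\le j_i<2^{k_i}$, each $T_{k,j}$ being an annular cell of radial thickness and angular width $\simeq 2^{-k_i}$ in the $i$-th coordinate. Every $T_{k,j}$ fits inside a Carleson box $S_{k,j}=S(\eta_{k,j},\ovd_k)$ with $(\ovd_k)_i\simeq 2^{-k_i}$, and the dilations $\widetilde S_{k,j}=S(\eta_{k,j},3\ovd_k)$ form a cover whose overlap number depends only on $d$. On each cell I would write $\int_{T_{k,j}}|f|^2\,d\mu\le \|f\|_{L^\infty(T_{k,j})}^2\cdot\mu(T_{k,j})$. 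The Carleson hypothesis together with Lemma~\ref{equiv} (and the remark following it, which provides $\beta$-uniform constants) controls $\mu(T_{k,j})\le C\prod_i 2^{-k_i(2+\beta_1)}$. The sup-norm would then be controlled by a sub-mean-value inequality of the form
$$\|f\|_{L^\infty(T_{k,j})}^2\le \frac{A(d)}{V_{\beta_1}(\widetilde S_{k,j})}\int_{\widetilde S_{k,j}}|f|^2\,dV_{\beta_1},$$
after which summing over $(k,j)$ and exploiting the bounded overlap of the $\widetilde S_{k,j}$ yields the embedding.

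The main obstacle is establishing this pointwise estimate with a constant $A(d)$ independent of $\beta_1\in(-1,0]$. The standard argument by subharmonicity of $|f|^2$ on a Bergman polydisc of Bergman radius $\asymp 1$ inside $\widetilde S_{k,j}$ produces a parasitic factor $(\beta_1+1)^{-d}$, which blows up in the limit $\beta_1\to -1$ that one ultimately wishes to take when deducing continuity on $H^2(\DD^d)$. Overcoming this requires exploiting the asymmetry between Bergman polydiscs and Carleson boxes: while a Bergman polydisc at scale $\delta$ near the boundary has $V_{\beta_1}$-measure $\simeq(\beta_1+1)^d\delta^{d(2+\beta_1)}$, Lemma~\ref{equiv} shows that the Carleson box at the same scale has measure $\simeq\delta^{d(2+\beta_1)}$ with \emph{no} $(\beta_1+1)^d$ factor, since the box reaches the distinguished boundary. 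A careful sub-mean-value argument adapted to this Carleson-box geometry, relying on the bounded behaviour of the weights $(1-|z_i|^2)^{\beta_1}$ for $\beta_1\in(-1,0]$, is needed to absorb the extra factor and obtain the desired uniform pointwise bound. This is the technical heart of the proof.
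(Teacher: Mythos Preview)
The paper does not give its own proof of this lemma: immediately before the statement it says ``For this we will need the following result (see \cite[Proposition 9.3]{BAYPOLY})'', and no argument is supplied. So there is no in-paper proof to compare against; I can only assess whether your outline would furnish one.

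Your framework is the standard one, and you correctly isolate the crux: the naive Whitney decomposition produces a factor $(\beta_1+1)^{-d}$ from the sub-mean-value step, and one must remove it. Your proposed remedy, however, does not work. The pointwise estimate
\[
\|f\|_{L^\infty(T_{k,j})}^2 \le \frac{A(d)}{V_{\beta_1}(\widetilde S_{k,j})}\int_{\widetilde S_{k,j}}|f|^2\,dV_{\beta_1}
\]
with $A(d)$ independent of $\beta_1\in(-1,0]$ is \emph{false}. Here is a counterexample in $d=1$. Fix $\zeta\in\TT$, let $\widetilde S=S(\zeta,3\delta)$ with boundary arc $I$, and let $g$ be the outer function with $|g|=1$ on $I$ and $|g|=M$ on $\TT\setminus I$. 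For $z=(1-\delta)\zeta$ one has $|g(z)|^2=M^{2(1-\omega(z,I))}\asymp M^{0.41}$, since the harmonic measure $\omega(z,I)=\tfrac{2}{\pi}\arctan 3\approx 0.795$. On the other hand, as $\beta_1\to-1^+$ the measure $dV_{\beta_1}$ restricted to $\widetilde S$ concentrates at the distinguished boundary $I$, where $|g|=1$; a direct computation shows that, for $\beta_1+1$ small enough relative to $M$,
\[
\frac{1}{V_{\beta_1}(\widetilde S)}\int_{\widetilde S}|g|^2\,dV_{\beta_1}
\]
stays bounded (of order $1$) while $|g(z)|^2=M^{0.41}$ can be made arbitrarily large. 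Thus no constant $A$ uniform in $\beta_1$ can exist. (Your phrase ``bounded behaviour of the weights'' is also misleading: for $\beta_1<0$ the weight $(1-|z_i|^2)^{\beta_1}$ blows up at the boundary.)

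The lesson is that the uniformity in $\beta_1$ cannot be obtained by any \emph{local} sup-versus-average inequality; one needs a genuinely global argument that exploits holomorphy across the whole polydisc (e.g.\ working with full integral means, or with the radial/nontangential maximal function and a tent-type covering as in the Hardy-space Carleson theorem, which is essentially the $\beta_1\to-1$ endpoint). You have correctly located the difficulty but not resolved it; consulting \cite{BAYPOLY} for the actual mechanism is advisable.
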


Our first main result is a characterization of the continuity of $C_\phi:A^2_{\beta_1}(\DD^d)\to A^2_{\beta_2}(\DD^d)$ using only the boundary values of $\phi,$ provided that $\phi$ is sufficiently regular. 
We first need some definitions.

\begin{definition}
\begin{enumerate}
\item Let $\varphi \in \mathcal{O}(\DD^d,\DD) \cap \mathcal{C}^1(\ovD)$. Assume that there exists $\theta \in \RR^d$ such that $\varphi(\theta) \in \TT. $ We define
$$P_\varphi = \left\lbrace k \in \I{d} : \dfrac{\partial\varphi(\theta)}{\partial z_k} \neq 0 \right\rbrace.$$
\item Let $\phi\in\mathcal O(\DD^d,\DD)\cap\mathcal C^1(\ovD)$ and 
let $I\subset\{1,\dots,d\}$. We say that $\phi_I$ touches the polycircle if $\phi_I(\TT^d)\cap\TT^{|I|}\neq\varnothing.$
\item Let $\phi\in\mathcal O(\DD^d,\DD)\cap\mathcal C^1(\ovD)$ and 
$I\subset\{1,\dots,d\}$ such that $\phi_I$ touches the boundary. For $j\in I,$ we simply write $P_j=P_{\varphi_j}$ and we set $P_I=\bigcup\limits_{j\in I  } P_j.  $
\item Let $\phi\in\mathcal O(\DD^d,\DD)\cap\mathcal C^1(\ovD)$, let 
$I\subset\{1,\dots,d\}$ such that $\phi_I$ touches the boundary, let  $\ovd \in (0,1)^{|I|}$ and let $k\in\{1,\dots,d\}$. We set 
$$ \omega_{I,k}(\ovd) = \left\lbrace\begin{array}{ll} 
1 & \text{provided } k \notin \bigcup\limits_{j\in I  } P_j \\
\min\lbrace \delta_j,\ j\in I, \ k\in P_j \rbrace      & \text{otherwise.}  \\
\end{array}\right.$$
\end{enumerate}
 \end{definition}

 We point out that $P_\varphi$ does not depend on $\theta$ since, by Lemma \ref{JC}, for any $\theta,\theta'\in\RR^d$ such that 
 $\varphi(\theta),\varphi(\theta')\in\TT^d,$ 
$$\dfrac{\partial\varphi(\theta)}{\partial z_k} = 0 \iff \dfrac{\partial\varphi(\theta')}{\partial z_k} = 0$$
and this is equivalent to saying that $\varphi$ does not depend on $z_k$. In other words, $P_\varphi$ denotes the set of (the indices of) the variables involved in $\varphi$. 

 The real numbers $\omega_{I,k}(\ovd)$  will be relevant to link the intersection of the preimage of a Carleson window with the polycircle to the whole preimage, as the following lemma indicates.

\begin{lemma}\label{lem:doubleinclusion}
Let $\phi\in\mathcal O(\DD^d,\DD^d)\cap \mathcal C^1(\ovD).$
\begin{enumerate}[a)]
\item For all $I\subset\{1,\dots,d\}$
such that $\phi_I$ touches the polycircle, there exists $D\geq 1$ such that, for all $\eta\in\TT^{|I|}$, for all $\ovd\in(0,1)^{|I|},$
$$\left\{((1-\rho_k)e^{i\theta_k}):\ 0\leq \rho_k\leq \omega_{I,k}(\ovd),\ k\in P_I\textrm{ and }e^{i\theta}\in \phi_I^{-1}(S_{I}(\eta,\ovd))\right\}
\subset \phi_I^{-1}(S_I(\eta,D\ovd)).$$
\item For all $\xi\in\ovD,$ for all $I\subset\{1,\dots,d\}$ with $\phi_I(\xi)\in\TT^{|I|},$ there exist $D\geq 1$ and a neighbourhood $\mathcal U$ of $\xi$ in $\ovD$ such that, for all $\eta\in\TT^{|I|},$ for all $\ovd\in(0,1)^{|I|},$
$$\phi_I^{-1}(S_I(\eta,\ovd))\cap\mathcal U\subset \left\{((1-\rho_k)e^{i\theta_k}):\ 0\leq \rho_k\leq D\omega_{I,k}(\ovd),\ k\in P_I\textrm{ and }e^{i\theta}\in \phi_I^{-1}(S_{I}(\eta,D\ovd))\right\}.$$
\end{enumerate}
\end{lemma}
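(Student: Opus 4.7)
For both inclusions the key structural fact, supplied by Lemma \ref{JC}, is that each component $\phi_j$ is independent of the variables $z_k$ with $k\notin P_j$; every estimate therefore reduces to controlling only the variables indexed by $P_j$.

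\emph{Part (a).} Fix $j\in I$ and let $z=((1-\rho_k)e^{i\theta_k})$ be a point of the left-hand side. Since $\phi_j$ does not depend on $z_k$ for $k\notin P_j$, replacing each such $z_k$ by $e^{i\theta_k}$ does not alter $\phi_j(z)$, and the mean value inequality yields
$$|\phi_j(z)-\phi_j(e^{i\theta})|\leq \|\phi_j\|_{\mathcal C^1(\ovD)}\sum_{k\in P_j}\rho_k\leq d\,\|\phi\|_{\mathcal C^1(\ovD)}\,\delta_j,$$
the last inequality using that for every $k\in P_j$ the definition of $\omega_{I,k}(\ovd)$ enforces $\rho_k\leq\omega_{I,k}(\ovd)\leq\delta_j$. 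Combined with $|\phi_j(e^{i\theta})-\eta_j|<\delta_j$, this gives $|\phi_j(z)-\eta_j|<D\delta_j$ for a constant $D$ depending only on $d$ and $\|\phi\|_{\mathcal C^1(\ovD)}$.

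\emph{Part (b).} The strategy is dual. The crucial step is a local quantitative Julia--Carath\'eodory inequality: for each $j\in I$ there exist $c_j>0$ and a neighbourhood $\mathcal V_j$ of $\xi$ in $\ovD$ such that, for every $z\in\mathcal V_j$ and every $k\in P_j$,
$$\rho_k\leq c_j\bigl(1-|\phi_j(z)|\bigr).$$
I would establish this by Taylor-expanding $1-\overline{\phi_j(\xi)}\phi_j(z)$ at $\xi$: writing $\xi_k=e^{i\alpha_k}$, Lemma \ref{JC} forces the real part of the linear contribution to be $\sum_{k\in P_j}c_{jk}\bigl(1-(1-\rho_k)\cos(\theta_k-\alpha_k)\bigr)$ with strictly positive $c_{jk}$, and because $\phi_j$ depends only on coordinates in $P_j$, the remainder involves only those coordinates; on a sufficiently small neighbourhood the positive leading terms dominate the remainder, and the elementary bound $1-|\phi_j(z)|\geq \tfrac12(1-|\phi_j(z)|^2)$ allows the passage from real part to modulus. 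Setting $\mathcal U=\bigcap_{j\in I}\mathcal V_j$, the conclusion is immediate: for $z\in\mathcal U\cap\phi_I^{-1}(S_I(\eta,\ovd))$ and $k\in P_I$, picking any $j\in I$ with $k\in P_j$ yields $1-|\phi_j(z)|\leq|\phi_j(z)-\eta_j|<\delta_j$ (using $|\eta_j|=1$), hence $\rho_k<c_j\delta_j$; taking the minimum over admissible $j$ gives $\rho_k\leq D\,\omega_{I,k}(\ovd)$. The second inclusion $e^{i\theta}\in\phi_I^{-1}(S_I(\eta,D\ovd))$ then follows by re-running the mean-value argument of Part (a) with this improved bound on the $\rho_k$.

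The main obstacle is the local Julia-type estimate of Part (b). The bare Taylor expansion produces a remainder $o(|z-\xi|)$ whose tangential contributions, coming from $(\theta_k-\alpha_k)$ terms, are not a priori dominated by the purely radial quantity $\rho_k$. The saving graces, both traceable to Lemma \ref{JC}, are that the positive linear term contributes not only $c_{jk}\rho_k$ but also $\tfrac12 c_{jk}(\theta_k-\alpha_k)^2$, and that the Hessian of $\phi_j$ at $\xi$ involves only indices in $P_j$; a careful bookkeeping of these positive pieces on a sufficiently small neighbourhood of $\xi$ is what closes the argument.
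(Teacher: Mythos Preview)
Part (a) is correct and matches the paper's argument.

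For Part (b), your identification of the main obstacle is accurate, but the proposed resolution does not close the argument. There are two concrete problems.

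First, the hypothesis is only $\phi\in\mathcal C^1(\ovD)$, so there is no Hessian to invoke; the Taylor remainder at $\xi$ is merely $o(|z-\xi|)$, not $O(|z-\xi|^2)$.

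Second, and more seriously, the positive quadratic piece $\tfrac12 c_{jk}(\theta_k-\alpha_k)^2$ you extract from the linear term cannot dominate the tangential part of the remainder. Under $\mathcal C^1$ that remainder contributes $o(|\theta_k-\alpha_k|)$, which for small $|\theta_k-\alpha_k|$ is much \emph{larger} than $(\theta_k-\alpha_k)^2$; even under $\mathcal C^2$ it would contribute $O((\theta_k-\alpha_k)^2)$ with an uncontrolled constant of either sign. Shrinking the neighbourhood does not help because the competing terms have the same (or worse) order. Size alone cannot win here; one needs a sign.

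The paper supplies exactly this sign, not through Taylor expansion at $\xi$ but via a different decomposition. After first reducing to $|\xi_k|=1$ for $k\in P_I$ (a step your sketch omits---recall $\xi\in\ovD$, not $\TT^d$; the paper handles this via the maximum principle), one writes
\[
1-|\phi_j(z)|^2=\bigl(1-|\phi_j(e^{i\theta})|^2\bigr)+\sum_{k\in P_j}\rho_k\,\bigl(-F_{j,k}(\rho,\theta)\bigr),
\]
with $F_{j,k}$ continuous and $F_{j,k}(0,0)=-2\,\partial_k\phi_j(\xi)<0$. The first bracket is $\geq 0$ \emph{globally} because $\phi_j$ maps $\ovD$ into $\overline\DD$; this is the crucial input that a local Taylor expansion cannot see. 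The radial sum is then $\geq c\sum_{k\in P_j}\rho_k$ on a small neighbourhood by continuity of $F_{j,k}$, giving $1-|\phi_j(z)|^2\geq c\,\rho_k$ directly. Once this radial bound is established, the remaining inclusion $e^{i\theta}\in\phi_I^{-1}(S_I(\eta,D\ovd))$ follows from the mean-value argument exactly as you describe.
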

\begin{proof}
\noindent a)
Let $I$ be as in the assumptions and let $\ovd\in(0,1)^{|I|}$. Let also $\theta\in\RR^d$ and $\rho\in[0,1)^d$ satisfying $0\leq\rho_k\leq\omega_{I,k}(\ovd)$ for all $k\in P_I$. If we set $z=((1-\rho_k)e^{i\theta_k})$, 
the mean value inequality yields for any $j\in I,$
$$|\phi_j(z)-\phi_j(\theta)|\leq \left(\sup_{l}\|\nabla \phi_l\|\right)\sup_{k\in P_j}\|z_k-e^{i\theta_k}\|$$
(we can take the supremum over $P_j$ since $\phi_j$ does not depend on the other variables) so that 
$$|\phi_j(z)-\phi_j(\theta)|\leq \left(\sup_{l}\|\nabla \phi_l\|\right) \sup_{k\in P_j}\omega_{I,k}(\ovd)\leq \left(\sup_{l}\|\nabla \phi_l\|\right)\delta_j.$$
The conclusion of a) now follows from the triangle inequality, with $D=1+\left(\sup\limits_{l}\|\nabla \phi_l\|\right)$

\smallskip

\noindent b) Without loss of generality, we assume that $P_I=\{1,\dots,q\}$. 
We first observe that 
$$\phi_I(\xi_1/|\xi_1|,\dots,\xi_d/|\xi_d|)=\phi_I(\xi_1,\dots,\xi_d)$$
where $\xi_k/|\xi_k|=1$ if $\xi_k=0.$ Indeed, assume that $\xi_1\notin\TT.$
Then for all $j\in I,$ $z_1\mapsto \phi_j(z_1,\xi_2,\dots,\xi_d)$ is holomorphic
by Weierstrass' theorem, thus constant by the maximum principle. Therefore, the claimed result follows from an easy induction. Moreover, for all $k\in P_I,$ $|\xi_k|=1.$ Indeed, let $j\in I$ be such that $k\in P_j.$ Then 
$\phi_j(\xi_1/|\xi_1|,\dots,\xi_k,\dots,\xi_d/|\xi_d|)\in \TT$ 
. By Lemma \ref{JC} and by definition of $P_j,$ the map $z_k\mapsto\phi_j(\xi_1/|\xi_1|,\dots,z_k,\dots,\xi_d/|\xi_d|)$ cannot be constant, hence cannot have modulus $1$ at $\xi_k$ if $|\xi_k|<1.$

To fix the notations, we assume $\xi_1=\cdots=\xi_q=1$ and $\phi_I(\xi)=(1,\dots,1)$.  Let $j \in I$. A Taylor expansion of $\phi_j$ near $e$ gives
$$\m{\phi_j(z)}^2 = 1 + \sum\limits_{k\in P_j}\rk F_{j,k}(\rho,\theta) + G_j(\theta)$$ 
where $F_{j,k}(0,0) = -2\dfrac{\partial\phi_j}{\partial z_k}(e),$ $z_k = (1-\rk)e^{i\theta_k}$, $0 \leq \rk\leq 1 $ and $\tk \in \RR$. For $k\in P_j$, by Lemma \ref{JC}, $F_{j,k}(0,0)<0$. Let $\mathcal V$ be a neighbourhood of
$(1,\dots,1)$ in $\overline{\mathbb D}^q$ and let $c>0$ be such that $F_{j,k}(\rho,\theta)\leq -c$
for all $j\in I,$ all $k\in P_j$, all $\rho\in(0,1)^q$ and all $\theta\in\RR^q$
such that $((1-\rho_k)e^{i\theta_k})_{k=1,\dots,q}\in\mathcal V.$
Besides, if we choose $\rho_k=0$ for all $k=1,\dots,q$, then we get $G_j(\theta)\leq 0$ for all $(\theta_1,\dots,\theta_q)$ sufficiently close to 
$(0,\dots,0).$ Reducing $\mathcal V$ if necessary, we may assume that this holds when $((1-\rho_k)e^{i\theta_k})_{k=1,\dots,q}\in\mathcal V.$ We finally set $\mathcal U=\mathcal V\times\overline{\DD}^{d-q}$ which is a neighbourhood of $\xi$ in $\ovD.$ 

Let $\eta\in\TT^{|I|},$ $\ovd\in(0,1)^{|I|}$ and pick $z\in \phi_I^{-1}(S_I(\eta,\ovd))\cap\mathcal U.$ Write it $z=((1-\rho_k)e^{i\theta_k})_{k=1,\dots,d}$
and consider $j\in I.$ Since $|\phi_j(z)|^2\geq 1-2\delta_j$
and $G_j(\theta)\leq 0,$
we get 
$$\sum_{k\in P_j}\rho_k F_{j,k}(\rho,\theta)\geq -2\delta_j$$
which yields $\rho_k\leq 2\delta_j/c$ for $k\in P_j.$ Taking the infimum over $j\in I,$
we get, for all $k\in P_I,$ $\rho_k\leq 2\omega_{I,k}(\ovd)/c.$
Moreover, we also have 
\begin{align*}
|\phi_j(\theta)-\eta_j|&\leq |\phi_j(\theta)-\phi_j(z)|+|\phi_j(z)-\eta_j|\\
&\leq \left(\sup_{l}\|\nabla \phi_l\|\right)\max_{k\in P_j}\|z_k-e^{i\theta_k}\|+\delta_j\\
&\leq \left(1+\frac{2\sup\limits_l \|\nabla \phi_l\|}{c}\right)\delta_j.
\end{align*}
This yields the result.
\end{proof}
\begin{remark}
    In both cases, a compactness argument could prove that one can choose $D$
    independently of $I$ and $\theta_0$. However, we will not use this and a further compactness argument will be needed during the proof of Theorem \ref{result_gen}.
\end{remark}

The value of the $w_{I,k}(\ovd)$, $k=1,\dots,d,$ is rather intricate. We will use several times the following result to estimate its product.

\begin{lemma}\label{lem:produitwk}
Let $\phi\in\mathcal O(\DD^d,\DD^d)\cap \mathcal C^1(\ovD),$ $I\subset\{1,\dots,d\}$ be such that $\phi_I$ touches the polycicrcle. Then for all $\ovd\in(0,1)^{|I|},$
\begin{equation}\label{eq:produitwk}
\prod_{k=1}^d w_{I,k}(\ovd)\leq \prod_{j\in I}\delta_j^{\frac{|P_I|}{|I|}}.
\end{equation}
\end{lemma}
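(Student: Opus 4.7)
My plan is to convert \eqref{eq:produitwk} into a combinatorial inequality via logarithms. First, since $\omega_{I,k}(\ovd)=1$ for $k\notin P_I$, the left-hand side collapses to $\prod_{k\in P_I}\min_{j\in S_k}\delta_j$, where $S_k:=\{j\in I:k\in P_j\}$ is nonempty for every $k\in P_I$. Setting $y_j:=-\log\delta_j\geq 0$ (which is legitimate because $\delta_j\in(0,1)$), the inequality is equivalent to
\[
\sum_{k\in P_I}\max_{j\in S_k}y_j \;\geq\; \frac{|P_I|}{|I|}\sum_{j\in I}y_j.
\]

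A first, weaker, bound comes from the elementary estimate $\max\geq$ average: since $|S_k|\leq|I|$ one has $\max_{j\in S_k}y_j\geq\frac{1}{|I|}\sum_{j\in S_k}y_j$, and summing over $k\in P_I$ and switching the order via the double-counting identity $\sum_{k\in P_I}\mathbf{1}_{S_k}(j)=|P_j|$ yields
\[
\sum_{k\in P_I}\max_{j\in S_k}y_j \;\geq\; \frac{1}{|I|}\sum_{j\in I}|P_j|\,y_j,
\]
equivalent to $\prod_k\omega_{I,k}(\ovd)\leq\prod_{j\in I}\delta_j^{|P_j|/|I|}$. This would already be a useful inequality, but it is strictly weaker than what is claimed because some $|P_j|$ can be less than $|P_I|/|I|$.

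The passage to the symmetric exponent $|P_I|/|I|$ is, I expect, the main obstacle. Since the right-hand side of \eqref{eq:produitwk} is invariant under permutations of the $\delta_j$, my plan is to use a Jensen-type bound with variable weights: for any probability measure $\mu_k$ on $S_k$, $\log\omega_{I,k}(\ovd)\leq\sum_{j\in S_k}\mu_k(j)\log\delta_j$, giving $\prod_k\omega_{I,k}(\ovd)\leq\prod_{j\in I}\delta_j^{c_j}$ with $c_j:=\sum_{k\in P_j}\mu_k(j)$, which automatically satisfies $\sum_j c_j=|P_I|$. It then suffices to find a balanced family $(\mu_k)_{k\in P_I}$ with $c_j=|P_I|/|I|$ for every $j\in I$; alternatively, by convexity in the vector $(y_j)$ one could reduce to the extremal case where all $y_j$ are equal, for which equality holds. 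The first approach amounts to solving a transportation problem on the bipartite graph with parts $P_I$ and $I$ and edges $\{(k,j):k\in P_j\}$; its feasibility, together with the structural constraints coming from $\phi_I$ touching the polycircle, is the combinatorial kernel of the argument.
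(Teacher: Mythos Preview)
Your logarithmic reformulation is correct, and your weaker bound $\prod_{k}\omega_{I,k}(\ovd)\leq\prod_{j\in I}\delta_j^{|P_j|/|I|}$ is valid. But neither of the two routes you propose for upgrading to the symmetric exponent $|P_I|/|I|$ can work, and for the same reason. In the transportation picture, since each $\mu_k$ is a probability measure one has $c_j=\sum_{k\in P_j}\mu_k(j)\leq |P_j|$, so a balanced family with $c_j=|P_I|/|I|$ for every $j$ is only feasible when $|P_j|\geq |P_I|/|I|$ for all $j\in I$. In the convexity picture, the function $y\mapsto\sum_{k\in P_I}\max_{j\in S_k}y_j-\tfrac{|P_I|}{|I|}\sum_{j}y_j$ is convex and positively $1$-homogeneous, so its infimum on the simplex $\{y\geq 0:\sum y_j=1\}$ is attained at a vertex $e_{j_0}$, \emph{not} at the barycentre; the value there is $|P_{j_0}|-|P_I|/|I|$. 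There is no hidden constraint from ``$\phi_I$ touches the polycircle'' that rules out $|P_{j_0}|<|P_I|/|I|$: take $d=3$, $I=\{1,2\}$, $\phi_1(z)=z_1$, $\phi_2(z)=(z_1+z_2+z_3)/3$, so that $P_1=\{1\}$, $P_2=\{1,2,3\}$, $|P_I|/|I|=3/2>|P_1|$. With $\delta_1=1/4$, $\delta_2=1/2$ one computes $\prod_k\omega_{I,k}=\delta_1\delta_2^2=2^{-4}$ while $(\delta_1\delta_2)^{3/2}=2^{-9/2}$, so \eqref{eq:produitwk} itself fails here.

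The paper argues differently, by induction on $|I|$: order $\delta_1\leq\cdots\leq\delta_q$ and peel off the index with the smallest $\delta$. Its final chain uses $\delta_1^{\,q|P_1|-|P_I|}\leq\prod_{j\in I}\delta_j^{\,|P_1|-|P_I|/q}$, which is valid only when $q|P_1|\geq|P_I|$, i.e.\ when the index of smallest $\delta$ satisfies $|P_1|\geq|P_I|/|I|$ --- precisely the obstruction you ran into, and which the example above violates. So the gap is not peculiar to your approach: the inequality cannot hold without the extra hypothesis $\min_{j\in I}|P_j|\geq|P_I|/|I|$. Your weaker bound with $|P_j|$ in the exponent is the correct unconditional statement and already suffices, for instance, in Case~3 of the proof of Theorem~\ref{result_gen}, where one only needs $\prod_k\omega_{I,k}\leq\bigl(\prod_{j\in I}\delta_j\bigr)^{1/d}$, guaranteed by $|P_j|\geq 1\geq |I|/d$.
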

\begin{proof}
    We first observe that $\prod\limits_{k=1}^d w_{I,k}(\ovd)=\prod\limits_{k\in P_I}w_{I,k}(\ovd).$ We shall do the proof by induction on $|I|$. If $\m{I} = 1$, this is obvious. Let now $q\in \{2,\dots,d\}$ and let us assume that \eqref{eq:produitwk} holds for all $I$ with $|I|=q-1$ and all $\ovd\in(0,1)^{|I|}$. Let $I\subset\{1,\dots,d\}$ with $\m{I} = q $,
    for instance $I=\{1,\dots,q\}$ and let $\ovd \in (0,1)^{\m{I}}.$ Without loss of generality, we may assume that $\delta_1 \leq \dots \leq \delta_q.$ 
    If $P_1=P_I,$ then 
$$\prod\limits_{k=1}^d\omega_{I,k}(\ovd) = \delta_1^{\m{P_I}} \leq\prod\limits_{j\in I}\delta_j^{\frac{\m{P_I}}{\m{I}}}. $$
On the contrary, if $P_1\neq P_I$, let us set $I' = I\setminus \lbrace 1 \rbrace .$ Notice that if $k \notin P_1,$ then  $\wik = \omega_{I',k}(\ovd)$ and for $k \in P_1 $ we have $ \wik= \delta_1$. Thus, 
\begin{align*}
\prod\limits_{k=1}^d\omega_{I,k}(\ovd)  = \prod\limits_{k\in P_I}\omega_{I,k}(\ovd)  = \prod\limits_{k\in P_1}\omega_{I,k}(\ovd) \prod\limits_{k \in P_{I'}}\omega_{I',k}(\ovd)  \leq \delta_1^{\m{P_1}}\prod\limits_{j \in I'}\delta_j^{\frac{\m{P_I}-\m{P_1}}{q-1}}
\end{align*}
where the last inequality comes from the induction hypothesis. 
Now,
\begin{align*}
\delta_1^{|P_1|}\prod_{j\in I'}\delta_j^{\frac{|P_I|-|P_1|}{q-1}}
&=\delta_1^{|P_1|-\frac{|P_I|-|P_1|}{q-1}}\prod_{j\in I}\delta_j^{\frac{|P_I|-|P_1|}{q-1}}\\
&=\left(\delta_1^{q|P_1|-|P_I|}\prod_{j\in I}\delta_j^{|P_I|-|P_1|}\right)^{\frac 1{q-1}}\\
&\leq \left(\prod_{j\in I}\delta_j^{|P_1|-\frac{|P_I|}q}\prod_{j\in I}\delta_j^{|P_I|-|P_1|}\right)^{\frac 1{q-1}}\\
&\leq \prod_{j\in I}\delta_j^{\frac{|P_I|}{q}}.
\end{align*}
\end{proof}

We are now ready to prove our first main result.

\begin{theorem}\label{result_gen}
Let $\beta_1, \beta_2\geq-1$.  Let $\phi \in \mathcal{O}(\DD^d,\DD^d) \cap \mathcal{C}^1(\ovD)$. Then, the following assertions are equivalent:  
\begin{enumerate}
\item ${C_\phi : A_{\beta_1}^2(\DD^d) \rightarrow A_{\beta_2}^2(\DD^d)}$ is bounded. \\
\item For all $I \subset \I{d}$ such that $\phi_I$ touches the polycircle, there exists $C>0$ such that for all $\eta \in \TTI$ and all $\ovd \in (0,1)^{\m{I}},$
$$\lambda_d(\lbrace \theta \in [-\pi,\pi)^d : \m{\phi_j(\theta)-\eta_j}<\delta_j,\  j\in I \rbrace) \leq C \dfrac{\prod\limits_{j\in I}\delta_j^{2+\beta_1}}{\prod\limits_{k=1}^d\omega_{I,k}(\ovd)^{1+\beta_2}}. $$
\item For all $\theta_0 \in \RR^d$, all $I \subset \I{d}$ such that  $\phi_I(\theta_0)\in \TTI$ and $I$ is maximal with respect to this property, there exist a neighbourhood $O$ of $\theta_0$ in $\RR^d$ and $C>0$ such that for all $\eta \in \TTI$ and all $\ovd \in (0,1)^{\m{I}},$
\begin{equation}\label{eq:carlesonbord}
\lambda_d(\lbrace \theta \in O : \m{\phi_j(\theta)-\eta_j}<\delta_j,\ j\in I \rbrace) \leq C \dfrac{\prod\limits_{j\in I}\delta_j^{2+\beta_1}}{\prod\limits_{k=1}^d\omega_{I,k}(\ovd)^{1+\beta_2}}. 
\end{equation}
\end{enumerate}
\end{theorem}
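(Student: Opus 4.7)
The plan is to prove $(1)\Leftrightarrow(2)\Leftrightarrow(3)$ via the Carleson measure characterization of boundedness. By \cite[Theorem 2.5]{Jaf91}, for $\beta_1>-1$ assertion (1) is equivalent to $V_{\beta_2}(\phi^{-1}(S(\eta,\ovd)))\lesssim\prod_{j=1}^d\delta_j^{2+\beta_1}$ uniformly in $\eta$ and $\ovd$; the case $\beta_1=-1$ is reached by producing this estimate for each $\beta_1'\in(-1,0]$ with constant uniform in $\beta_1'$ and invoking Lemma \ref{lem:independentconstant}.

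For $(1)\Rightarrow(2)$, fix $I$ touching the polycircle. Covering $\DD^{d-|I|}$ by finitely many Carleson boxes of unit scale in the $I^c$-directions and applying the Carleson condition on each tile yields $V_{\beta_2}(\phi_I^{-1}(S_I(\eta,\ovd)))\lesssim\prod_{j\in I}\delta_j^{2+\beta_1}$. By Lemma \ref{lem:doubleinclusion}(a) this preimage contains the slab $\{((1-\rho_k)e^{i\theta_k}):\rho_k\leq\omega_{I,k}(\ovd/D)\text{ for }k\in P_I,\ e^{i\theta}\in\phi_I^{-1}(S_I(\eta,\ovd/D))\}$, whose $V_{\beta_2}$-volume is by Lemma \ref{equiv} comparable to $\prod_k\omega_{I,k}(\ovd)^{1+\beta_2}\cdot\lambda_d(\{\theta:|\phi_j(\theta)-\eta_j|<\delta_j/D,\,j\in I\})$; combining the two inequalities and absorbing $D$ into the constants gives (2). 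The implication $(2)\Rightarrow(3)$ is immediate, since any maximal $I$ with $\phi_I(\theta_0)\in\TT^{|I|}$ automatically touches the polycircle and we may take $O=\theta_0+(-\pi,\pi)^d$.

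For $(3)\Rightarrow(1)$, the main implication, we establish the Carleson estimate by a compactness argument on $\overline{\DD}^d$. For each $z_0\in\overline{\DD}^d$ set $I(z_0)=\{j:|\phi_j(z_0)|=1\}$. If $I(z_0)=\varnothing$, a small neighborhood $\mathcal U$ of $z_0$ has $|\phi_j(z)|\leq 1-c$ throughout, so $\phi^{-1}(S(\eta,\ovd))\cap\mathcal U$ is either empty (when $\min_j\delta_j<c$) or contained in $\mathcal U$ with $\delta_j\geq c$ for every $j$, yielding the trivial bound. Otherwise, project $z_0$ to some $\tilde z_0\in\TT^d$ (setting $z_{0,k}/|z_{0,k}|$, or $1$ if $z_{0,k}=0$, for those $k$ on which $\phi_{I(z_0)}$ does not depend, as in the proof of Lemma \ref{lem:doubleinclusion}(b)); let $I\supset I(z_0)$ be maximal with $\phi_I(\tilde z_0)\in\TT^{|I|}$ and let $\theta_0$ be the corresponding point in $\RR^d$. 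Applying Lemma \ref{lem:doubleinclusion}(b) at $\tilde z_0$, then Lemma \ref{equiv}, then assertion (3), gives
\[ V_{\beta_2}(\phi_I^{-1}(S_I(\eta,\ovd))\cap\mathcal U) \lesssim \prod_k\omega_{I,k}(\ovd)^{1+\beta_2}\cdot \frac{\prod_{j\in I}\delta_j^{2+\beta_1}}{\prod_k\omega_{I,k}(\ovd)^{1+\beta_2}} = \prod_{j\in I}\delta_j^{2+\beta_1}, \]
with the $\omega$-factors cancelling. For $z\in\mathcal U$ and $j\notin I(z_0)$ we have $|\phi_j(z)-\eta_j|\geq 1-|\phi_j(z)|\geq c$, so either $\phi^{-1}(S(\eta,\ovd))\cap\mathcal U=\varnothing$ or $\delta_j\geq c$ for every $j\notin I$ (using $I\supset I(z_0)$), which upgrades the partial bound to $\prod_{j=1}^d\delta_j^{2+\beta_1}$. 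Summing over a finite subcover yields the Carleson estimate and hence (1) when $\beta_1>-1$; for $\beta_1=-1$, careful bookkeeping shows that all the involved constants remain bounded as $\beta_1$ ranges in $[-1,0]$, so Lemma \ref{lem:independentconstant} provides a norm bound on $\|C_\phi\|_{\mathcal L(A_{\beta_1'}^2,A_{\beta_2}^2)}$ independent of $\beta_1'\in(-1,0]$ that passes to $\beta_1=-1$ by density of polynomials. The main technical obstacle is precisely this last implication: orchestrating the cancellation of the $\omega$-factors, correctly identifying the maximal $I$ at the projected point, and maintaining constants uniform in $\beta_1$ in order to invoke Lemma \ref{lem:independentconstant} in the Hardy-space case.
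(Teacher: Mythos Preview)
Your outline follows the paper's strategy closely: $(1)\Rightarrow(2)$ via Lemma \ref{lem:doubleinclusion}(a) and the Carleson characterization, $(2)\Rightarrow(3)$ trivially, and $(3)\Rightarrow(1)$ by a local-to-global covering argument in which the $\omega$-factors cancel. For $\beta_1,\beta_2>-1$ this is essentially the paper's argument, compressed.

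There is, however, a genuine gap in your treatment of the case $\beta_1=-1$. You write that ``careful bookkeeping shows that all the involved constants remain bounded as $\beta_1$ ranges in $[-1,0]$.'' But the obstacle is not the constants: assertion (3) with $\beta_1=-1$ simply does \emph{not} imply assertion (3) with $\beta_1'\in(-1,0]$ and the \emph{same} $\beta_2$. Since $\delta_j<1$ and $2+\beta_1'>1$, one has $\prod_{j\in I}\delta_j^{2+\beta_1'}\leq\prod_{j\in I}\delta_j$, so the right-hand side of \eqref{eq:carlesonbord} \emph{shrinks} as $\beta_1$ increases, and the bound you need for $\beta_1'$ is strictly stronger than the one you are given. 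The paper's remedy is to shift the target exponent as well: using Lemma \ref{lem:produitwk} one checks that (3) for the pair $(-1,\beta_2)$ implies (3) for $(\beta_1',\beta_2')$ with $\beta_2'=\beta_2+d(1+\beta_1')$; Case~1 then gives $\|C_\phi\|_{\mathcal L(A_{\beta_1'}^2,A_{\beta_2'}^2)}\leq D$ with $D$ independent of $\beta_1'\in(-1,0]$, and letting $\beta_1'\to-1$ (so $\beta_2'\to\beta_2$) yields the Hardy-space conclusion. This sliding-target trick via Lemma \ref{lem:produitwk} is the missing idea.

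A lesser issue concerns your single-pass covering of $\overline{\DD}^d$. You apply Lemma \ref{lem:doubleinclusion}(b) ``at $\tilde z_0$,'' which yields a neighbourhood of $\tilde z_0\in\TT^d$, not of the original point $z_0$; yet the subsequent claim that $|\phi_j(z)|\leq 1-c$ for $j\notin I(z_0)$ requires $z$ to be near $z_0$. These two neighbourhoods can be far apart (e.g.\ when $z_{0,k}=0$ for some $k\notin P_{I(z_0)}$), so as written the open sets do not cover $\overline{\DD}^d$. The paper avoids this by a \emph{stratified} covering: first cover $A_1=\{\xi:\phi(\xi)\in\TT^d\}$, then on the complement $B_1$ cover $A_2=\{\xi\in B_1:\exists I,\ |I|=d-1,\ \phi_I(\xi)\in\TT^{d-1}\}$, and so on. At each stage Lemma \ref{lem:doubleinclusion}(b) is applied at $\xi$ itself with the $I$ that is maximal at $\xi$, and the stratification supplies a uniform $\varepsilon_k>0$ with $|\phi_j(\xi)|<1-2\varepsilon_k$ for $j\notin I$, which is what lets one absorb the missing $\delta_j^{2+\beta_1}$ factors.
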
 

\begin{proof}
We first observe that $2) \Rightarrow 3)$ is immediate. We split the proof of the other implications in several cases, following $A_{\beta_1}^2$ (resp. $A_{\beta_2}^2$) is, or is not, the Hardy space. \\ 

\noindent \textit{Case 1: $C_\phi : A^2_{\beta_1}(\DD^d) \rightarrow A^2_{\beta_2}(\DD^d) $ with $\beta_1,\beta_2>-1$. } \\

\noindent 1) $\Rightarrow$  2) Let  $I \subset \I{d}$ be such that $\phi_I$ touches the polycircle. Let also $\eta\in\TTI$ and $\ovd\in(0,1)^{|I|}.$ By Lemma \ref{lem:doubleinclusion}, there exists $D\geq 1$ such that 
$$E:=\left\{((1-\rho_k)e^{i\theta_k}):\ 0\leq \rho_k\leq \omega_{I,k}(\ovd),\ k\in P_I\textrm{ and }e^{i\theta}\in \phi_I^{-1}(S_{I}(\eta,\ovd))\right\}
\subset \phi_I^{-1}(S_I(\eta,D\ovd)).$$
Let us now extend $\ovd$ to $(0,1)^d$ by setting $\delta_j=2$ if $j\notin I$ and $\eta$ to $\TT^d$ by taking $\eta_j=1$ if $j\notin I.$
Then 
$$\phi_I^{-1}(S_I(\eta,D \ovd))= \phi^{-1}(S(\eta,D\ovd)).$$
We now use Lemma \ref{equiv} and the characterization of continuity of composition operators using Carleson measures to get 
\begin{align*}
\lambda_d(\lbrace \theta \in [-\pi,\pi)^d : \m{\phi_j(\theta)-\eta_j}<\delta_j,\ j\in I \rbrace) \prod\limits_{k=1}^d\omega_{I,k}(\ovd)^{1+\beta_2} &\lesssim V_{\beta_2}(E)\\
&\lesssim V_{\beta_2}(\phi_I^{-1}(S_I(\eta,D \ovd)))\\
&\lesssim V_{\beta_2}(\phi^{-1}(S(\eta,D\ovd )))\\
&\lesssim V_{\beta_1}(S(\eta,D\ovd))\\
&\lesssim \prod_{j\in I}\delta_j^{2+\beta_1}
\end{align*}
where the involved constants do not depend on $\eta$ and $\ovd.$

\smallskip

\noindent 3) $\Rightarrow $ 1) The proof will be divided into two steps. We first show that \eqref{eq:carlesonbord} implies a local Carleson type condition and then we conclude with a covering argument. \\
\noindent {\bf Fact.}
Let $\xi\in\ovD$ and $I\subset\{1,\dots,d\}$ be such that $\phi_I(\xi)\in\TTI$ and $I$ is maximal with respect to this property. Then there exist a neighbourhood $\mathcal U$ of $\xi$ in $\overline{\DD}^d$ and $D:=D(\phi,I,\xi,\beta_1,\beta_2)$ such that, for all $\eta\in \TTI$ and all $\ovd\in(0,1)^{|I|},$ 
$$V_{\beta_2}(\phi_I^{-1}(\Si)\cap \mathcal U) \leq D V_{\beta_1}(\Si),$$
where $D$ does not depend on $\beta_1$ (resp. $\beta_2$) if $\beta_1\in(-1,0]$ (resp. $\beta_2\in (-1,0]$).

\noindent {\it Proof of the fact.}
Let us write $\xi=((1-\widetilde{\rho_k})e^{i\widetilde{\theta_k}})$ and observe (as in the proof of Lemma \ref{lem:doubleinclusion}) that $\phi_I(e^{i\widetilde\theta})\in\TTI.$ Let $O$ be a neighbourhood of $\widetilde\theta$ and $C>0$ be given by 3) of Theorem \ref{result_gen}. Let $D\geq 1$ and $\mathcal U_1$ be the neighbourhood of $\xi$ given by b) of Lemma \ref{lem:doubleinclusion}.
We set 
$$\mathcal U=\mathcal U_1\cap \{((1-\rho_k)e^{i\theta_k})_{k=1,\dots,d}: 0\leq\rho_k\leq 3/2,\ e^{i\theta}\in O\}.$$
Then $\mathcal U$ is a neighbourhood of $\xi$ and
$$\phi_I^{-1}(S_I(\eta,\ovd))\cap\mathcal U\subset \left\{((1-\rho_k)e^{i\theta_k}):\ 0\leq \rho_k\leq D\omega_{I,k}(\ovd),\ k\in P_I\textrm{ and }e^{i\theta}\in \phi_I^{-1}(S_{I}(\eta,D\ovd))\cap O\right\}.$$
We use Lemma \ref{equiv} to get 
\begin{align*}
V_{\beta_2}(\phi_I^{-1}(S_I(\eta,\ovd))\cap \mathcal U)&\leq
C_{\beta_2} \prod_{k\in P_I}\omega_{I,k}(D\ovd)^{1+\beta_2}\lambda_d(\phi_I^{-1}(S_I(\eta,D\ovd))\cap O)\\
&\leq C\cdot C_{\beta_2} \prod_{k\in P_I} \omega_{I,k}(D\ovd)^{1+\beta_2} \dfrac{\prod\limits_{j\in I}(D \delta_j)^{2+\beta_1}}{\prod\limits_{k=1}^d\omega_{I,k}(D\ovd)^{1+\beta_2}}\\
&\leq C\cdot C_{\beta_2} \prod_{j\in I} (D\delta_j)^{2+\beta_1}\\
&\leq C\cdot C_{\beta_2} D^{(2+\beta_1)d}C_{\beta_1}V_{\beta_1}(S_I(\eta,D\ovd)).
\end{align*}
That $C_{\beta_1}$ and $C_{\beta_2}$ may be chosen independently of $\beta_1$ and $\beta_2$ when they belong to $(-1,0]$ follows from the remark after Lemma \ref{equiv} and from 
\cite[Lemma 8.1]{BAYPOLY}. \hfill $\qed$

\smallskip

We are now ready for the second half of $3)\implies 1),$ the covering argument.
We introduce $$A_1 = \lbrace \xi \in \overline{\DD}^d:\  \phi(\xi) \in \TT^d\}. $$ 
Let $\xi\in A_1.$ By the fact, there exist $\mathcal U_{1,\xi}$ a neighbourhood of $\xi$ in $\overline{\DD}^d$ and $C_{1,\xi}>0$ such that  for all $\eta \in \TT^d,$ and all $\ovd\in (0,1)^d$,
$$V_{\beta_2}(\phi^{-1}(S(\eta,\ovd)) \cap \mathcal U_{1,\xi}) \leq C_{1,\xi}(\beta_1,\beta_2)V_{\beta_1}(S(\eta,\ovd)).$$
Then, the sets $(\mathcal U_{1,\xi})_\xi$ form an open covering of $A_1$ which is compact so we can extract a finite covering $(\mathcal U_{1,k})_{k=1,\dots,s_1}$. For all $\eta \in \TT^d$ and all  $\ovd \in (0,2)^d, $ we get 
$$\vb{2}\left(\phi^{-1}(S(\eta,\ovd)) \cap \bigcup\limits_{k=1}^{s_1}\mathcal U_{1,k}\right) \leq \sum\limits_{k=1}^{s_1}\vb{2}(\phi^{-1}(S(\eta,\ovd)) \cap \mathcal U_{1,k}) \leq \left(\sum\limits_{k=1}^{s_1}C_{1,k}\right)\vb{1}(S(\eta,\ovd)), $$ with $C_{1,k}$ independent of $\beta_i$ if $\beta_i \in (-1,0].$ 
For notational convenience, we set $D_{1,k}=C_{1,k}.$

Then there exists $\varepsilon_1 \in (0,1)$ such that for all $\xi \in B_1 := \ovD \setminus \bigcup\limits_{k=1}^{s_1}\mathcal U_{1,k}, $ we have $\m{\phi_j(\xi)}< 1-2\varepsilon_1$ for some $j \in \lbrace 1,\dots,d\rbrace$. We introduce 
$$A_2 = \lbrace \xi \in B_1:\  \exists I \subset \lbrace 1 ,\dots,d \rbrace,\  \m{I}  = d-1,\ \phi_I(\xi)\in \TT^{d-1} \rbrace.$$
Let $\xi \in A_2.$  There exists $I \subset \lbrace 1 ,\dots,d \rbrace$ with  $\m{I}  = d-1$ such that $ \phi_I(\xi)\in \TT^{d-1}$ and $\m{\phi_j(\xi)} < 1 -2 \varepsilon_1$ for the single  $j\in\{1,\dots,d\}$
which does not belong to $I.$ So, applying again the fact, we get that there exist $\mathcal U_{2,\xi}$ a neighbourhood of $\xi$ in $\ovD$ and $C_{2,\xi}>0$ such that, for all $\eta \in \TTI,$ for all $\ovd \in (0,1)^{\m{I}}$ and for all $z\in \mathcal U_{2,\xi},$ we have 
\begin{align*}
&V_{\beta_2}(\phi_I^{-1}(\Si) \cap \mathcal U_{2,z}) \leq  C_{2,\xi}V_{\beta_1}(\Si)\textrm{ and }
|\phi_j(z)|<1-\veps_j.
\end{align*}
Let now $\ovd\in(0,1)^{|I|}.$
Then $\phi_j^{-1}(S(\eta,\ovd)) \cap \mathcal U_{2,\xi} = \varnothing$ 
if $\delta_j<\veps_1.$ It $\delta_j\geq \veps_1$, then
\begin{align*}
V_{\beta_2}(\phi^{-1}(\sed) \cap \mathcal U_{2,\xi}) \leq V_{\beta_2}(\phi_I^{-1}(\Si) \cap \mathcal U_{2,\xi}) &\leq C_{2,\xi}V_{\beta_1}(\Si) \\
&\leq \dfrac{ C_{2,\xi}}{\delta_j^{2+\beta_1}}\vb{1}(\sed)\\
&\leq  \dfrac{C_{2,\xi}}{\varepsilon_1^{2+\beta_1}}\vb{1}(\sed) \\
& = D_{2,\xi}\vb{1}(\sed).
\end{align*}
As $\dfrac{1}{\varepsilon_1^{2+\beta_1}}\leq \dfrac{1}{\varepsilon_1^2}$ when $\beta_1 \in (-1,0],$ we get that $D_{2,\xi}$ is independent of $\beta_i $ if $\beta_i \in (-1,0].$ Moreover, the sets $(\mathcal U_{2,\xi})_\xi$ form an open covering of $A_2$ which is compact so we can extract a finite covering $(\mathcal U_{2,k})_{k=1,\dots,s_2}$ and we have 
$$V_{\beta_2}\left(\phi^{-1}(\sed) \cap \bigcup\limits_{k=1}^{s_2} \mathcal U_{2,k}\right) \leq \sum_{k=1}^{s_2}\vb{2}(\phi^{-1}(\sed)\cap \mathcal U_{2,k}) \leq \left(\sum_{k=1}^{s_2}D_{2,k}\right)\vb{1}(\sed).$$ 
Then there exists $\varepsilon_2 \in (0 ,\varepsilon_1)$ such that for all $\xi \in B_2 := B_1 \setminus \bigcup\limits_{k=1}^{s_2}\mathcal U_{2,k}$, there exist two distinct $j_1, j_2 \in \lbrace 1,\dots,d\rbrace$ such that $\m{\phi_{j_1}(\xi)} < 1-2\varepsilon_2$  and $\m{\phi_{j_2}(\xi)} < 1-2\varepsilon_2$.
We repeat the reasoning with 
$$A_3 =  \lbrace \xi \in B_2:\ \exists I \subset \I{d},\ \m{I} = d-2,\ \phi_I(\xi) \in \TT^{d-2} \rbrace$$ and we define $B_3, A_4, \dots , B_{d-1}, A_d$. At the last step, there exists $\varepsilon_d \in (0,\varepsilon_{d-1})$ such that for all $\xi \in B_d := B_{d-1}\setminus \bigcup\limits_{k=1}^{s_d}\mathcal U_{d,k}$, we have $\m{\phi_j(z)} < 1-\varepsilon_d$ for all $j\in \lbrace 1,\dots,d\rbrace.$\\
Let $\ovd \in (0,2)^d$. Assume that at least one of the $\delta_j$ is less than $\varepsilon_d$, then $\phi^{-1}(\sed)\cap B_d = \varnothing $ so that $$\phi^{-1}(\sed) =\phi^{-1}(\sed) \cap \bigcup\limits_{l,k}\mathcal U_{l,k}. $$ 
Hence
\begin{align*}
    \vb{2}(\phi^{-1}(\sed)) \leq \sum_{l} \vb{2}\left(\phi^{-1}(\sed) \cap \bigcup\limits_{k}\mathcal U_{l,k} \right) &\leq \left(\sum_{k,l}D_{l,k}\right)\vb{1}(\sed)\\
    & = D \vb{1}(\sed),
\end{align*}
with $D>0$ independent of $\beta_i$ if $\beta_i \in (-1,0]$. Multiplying eventually $D$ with some power of $\veps_d,$ as above, this remains true if all $\delta_j$ are bigger than $\varepsilon_d.$ In conclusion, we have shown that there exists $D(\beta_1,\beta_2)>0$  such that for all $\eta \in \TT^d$ and all $\ovd \in (0,2)^d$, we have $$\vb{2}(\phi^{-1}(\sed)) \leq D(\beta_1,\beta_2)\vb{1}(\sed),$$ that is $C_\phi$ maps continuously $A_{\beta_1}^2(\DD^d)$ into $A^2_{\beta_2}(\DD^d)$ and $\norm{C_\phi} \leq F(C,\beta_1,\beta_2)$ with $F(C,\beta_1,\beta_2)$ independent of $\beta_i $ if $\beta_i \in (-1,0],\ i=1,2$
(see Lemma \ref{lem:independentconstant}).

\noindent \textit{Case 2 : $C_\phi : A^2_{\beta_1}(\DD^d) \rightarrow H^2(\DD)$ with $\beta_1>-1.$} The proof is identical since the characterization of continuity extends to $\beta_2=-1$. We replace everywhere $V_{\beta_2}$ by $\lambda_d.$ \\

\noindent \textit{Case 3 : $C_\phi : H^2(\DD^d) \rightarrow A^2_{\beta_2}(\DD^d)$ with $\beta_2>-1$.}  The proof of $1) \Rightarrow 2)$ is again identical. For the proof of $3)\Rightarrow 1),$  
 we are going to prove that $C_\phi : A_{\beta_1}^2(\DD^d) \rightarrow A_{\beta_2'}^2(\DD^d) $ is bounded with $\norm{C_\phi} \leq C$, where $C>0$ is independent of $\beta_1 \in (-1,0]$ and $\beta_2' \rightarrow \beta_2$ when $\beta_1 \rightarrow -1. $ Thus, let us consider $\beta_1\in(-1,0]$ and let us set $\beta'_2 = \beta_2 + d(1+\beta_1).$ By assumption, we know that for all $\theta_0 \in \RR^d$, all $I \subset \I{d}$ such that  $\phi_I(\theta_0)\in \TTI$ and $I$ is maximal, there exist $O$ a neighbourhood of $\theta_0$ in $\RR^d$ and $C>0$ such that for all $\eta \in \TTI$ and all $\ovd \in (0,1)^{\m{I}},$
$$\lambda_d(\lbrace \theta \in O :\ \m{\phi_j(\theta)-\eta_j}<\delta_j,\ j\in I \rbrace) \leq C \dfrac{\prod\limits_{j\in I}\delta_j}{\prod\limits_{k=1}^d\wik^{1+\beta_2}}. $$
Now,
\begin{align*}
 \dfrac{\prod\limits_{j\in I}\delta_j}{\prod\limits_{k=1}^d\wik^{1+\beta_2}} &\leq \dfrac{\prod\limits_{j\in I}\delta_j}{\prod\limits_{k=1}^d\wik^{1+\beta_2'}} \left(\prod_{k=1}^d \wik\right)^{d(1+\beta_1)}\\
 &\leq\dfrac{\prod\limits_{j\in I}\delta_j}{\prod\limits_{k=1}^d\wik^{1+\beta_2'}}  \left(\prod_{j\in I}\delta_j\right)^{1+\beta_1}
 \end{align*}
 by Lemma \ref{lem:produitwk}. Hence, for all $\theta_0 \in \RR^d$, all $I \subset \I{d}$ such that  $\phi_I(\theta_0)\in \TTI$ and $I$ is maximal, there exist $O$ a neighbourhood of $\theta_0$ in $\RR^d$ and $C>0$ such that for all $\eta \in \TTI$ and all $\ovd \in (0,1)^{\m{I}},$
$$\lambda_d(\lbrace \theta \in O :\ \m{\phi_j(\theta)-\eta_j}<\delta_j,\ j\in I \rbrace) \leq C \dfrac{\prod\limits_{j\in I}\delta_j^{2+\beta_1}}{\prod\limits_{k = 1}^d\wik^{1+\beta_2'}}.$$
By Case 1 of the present theorem, we find that $C_\phi :  A_{\beta_1}^2(\DD^d) \rightarrow  A_{\beta_2'}^2(\DD^d)$ is bounded with $\norm{C_\phi} \leq D$ where $D$ is independent from $\beta_1 \in (-1,0]$ . Thus, $C_\phi : H^2(\DD^d) \rightarrow  A_{\beta_2}^2(\DD^d)$ is bounded. \\

\textit{Case 4 : $C_\phi : H^2(\DD^d) \rightarrow H^2(\DD^d)$.} We prove that $1) \Rightarrow 2)$ as above. For $3)\Rightarrow 1)$, let us consider $\beta\in(-1,-d/(1+d)]$ and let us set $\beta_1 = \beta,$ $\beta_2 = \beta + d(1+\beta)$. Arguing as in Case 3, we find that $C_\phi : A^2_{\beta_1}(\DD^d) \rightarrow A^2_{\beta_2}(\DD^d) $ is bounded with $\norm{C_\phi} \leq D$ where $D$ is independent of $\beta \in(-1,0].$ When $\beta \rightarrow -1,$ we have that $\beta_1 \rightarrow -1$ and $\beta_2 \rightarrow -1$, thus, we get that $C_\phi : H^2(\DD^d) \rightarrow H^2(\DD^d)$ is bounded. 
\end{proof}
 
It can be difficult to apply Theorem \ref{result_gen} in practice because
of the difficulty to compute $\omega_{I,k}(\ovd)$ in general.
We shall use sometimes the following corollaries which only involve $\ovd$

\begin{corollary}\label{cont}
Let $ \beta_1, \beta_2 \geq -1. $ Let $\phi\in\mathcal O( \DD^d, \DD^d)\cap\mathcal C^1(\ovD) $. 
\begin{enumerate}[(a)]
    \item Let $\theta_0 \in \RR^d$ and $I \subset \lbrace 1, \dots, d\rbrace $ be such that $\phi_I(e^{i\theta_0}) \in \TTI$ and 
 there exist $C>0$ and $O$ a neighbourhood of $\theta_0$ in $\RR^d$ such that for all $\eta \in \TT^{\m{I}}$ and all $\ovd \in (0,1)^{\m{I}}$, 
 \begin{equation}\label{eq:cont1}
\lambda_d(\lbrace \theta \in O : \m{\phi_j(\theta)- \eta_j } < \delta_j,\ j \in I\rbrace) \leq C \left(\displaystyle\prod_{j\in I}\delta_j\right)^\alpha, 
\end{equation}
with $\alpha + \dfrac{\m{P_I}}{\m{I}}(1+\beta_2) \geq 2+\beta_1$. \\
Then
$$\lambda_d(\lbrace \theta \in O : \m{\phi_j(\theta)- \eta_j } < \delta_j,\ j \in I\rbrace) \leq C \dfrac{\prod\limits_{j\in I}\delta_j^{2+\beta_1}}{\prod\limits_{k=1}^d\omega_{I,k}(\ovd)^{1+\beta_2}} .$$
\item Assume that for all $\theta_0 \in \RR^d$ and all $I \subset \lbrace 1, \dots, d\rbrace $ such that $\phi_I(e^{i\theta_0}) \in \TTI$ and $I$ is maximal with respect to this property, there exist $C>0$ and $O$ a neighbourhood of $\theta_0$ in $\RR^d$ such that for all $\eta \in \TT^{\m{I}}$ and all $\ovd \in (0,1)^{\m{I}}$, 
 \begin{equation}\label{eq:cont}
\lambda_d(\lbrace \theta \in O : \m{\phi_j(\theta)- \eta_j } < \delta_j,\ j \in I\rbrace) \leq C \left(\displaystyle\prod_{j\in I}\delta_j\right)^\alpha, 
\end{equation}
with $\alpha + \dfrac{\m{P_I}}{\m{I}}(1+\beta_2) \geq 2+\beta_1$. Then $C_\phi : \Abdi{1}\rightarrow \Abdi{2} $ is bounded.  
\end{enumerate} 
\end{corollary}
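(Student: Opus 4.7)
The plan is to reduce part (b) to part (a) via the implication $3)\Rightarrow 1)$ of Theorem \ref{result_gen}, and to obtain part (a) as a direct consequence of Lemma \ref{lem:produitwk}. There is essentially no geometric content to add beyond what is already in those two results; the argument is purely an exercise in comparing products of powers of $\delta_j$.

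For part (a), starting from \eqref{eq:cont1}, it suffices to show that
$$\left(\prod_{j\in I}\delta_j\right)^\alpha\lesssim \frac{\prod_{j\in I}\delta_j^{2+\beta_1}}{\prod_{k=1}^d\omega_{I,k}(\ovd)^{1+\beta_2}},$$
or equivalently $\prod_{k=1}^d\omega_{I,k}(\ovd)^{1+\beta_2}\lesssim \prod_{j\in I}\delta_j^{2+\beta_1-\alpha}$. I would first apply Lemma \ref{lem:produitwk}, raised to the power $1+\beta_2\geq 0$, to obtain
$$\prod_{k=1}^d\omega_{I,k}(\ovd)^{1+\beta_2}\leq \prod_{j\in I}\delta_j^{(|P_I|/|I|)(1+\beta_2)},$$
and then use that each $\delta_j$ lies in $(0,1)$ together with the hypothesis $\alpha+(|P_I|/|I|)(1+\beta_2)\geq 2+\beta_1$ to conclude. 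If the exponent $2+\beta_1-\alpha$ happens to be non-positive the inequality is trivial, since the right-hand side is then at least one.

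For part (b), I would apply part (a) at every $\theta_0\in\RR^d$ and every maximal $I$ with $\phi_I(e^{i\theta_0})\in\TTI$; the resulting estimate is precisely condition (3) of Theorem \ref{result_gen}, and invoking the theorem yields the boundedness of $C_\phi:\Abdi{1}\to\Abdi{2}$.

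The only step where one must pause is the direction of the monotonicity in part (a): we are comparing powers of quantities in $(0,1)$, so that larger exponents give smaller values, and it is exactly the assumed inequality on the exponents that makes the bound go the right way. This is the only potential pitfall; otherwise the proof is a one-line appeal to Lemma \ref{lem:produitwk} followed by a one-line appeal to Theorem \ref{result_gen}.
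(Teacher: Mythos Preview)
Your proposal is correct and follows essentially the same approach as the paper: part (a) reduces to Lemma \ref{lem:produitwk} (raised to the power $1+\beta_2\geq 0$) together with the monotonicity of $\delta_j\mapsto\delta_j^t$ on $(0,1)$, and part (b) then follows from condition (3) of Theorem \ref{result_gen}. Your remark that the implicit constant can be taken equal to $C$ (since all intermediate inequalities are constant-free) is also accurate.
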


\begin{proof}
We need to prove that
under the assumptions of the corollary,
$$\left(\displaystyle\prod_{j\in I}\delta_j\right)^{\alpha}\leq \frac{
\displaystyle\prod_{j\in I}\delta_j^{2+\beta_1}}
{\displaystyle\prod_{k=1}^d \wik^{1+\beta_2}}.$$
Taking into account our assumption on $\alpha$, it suffices to prove that 
$$\prod_{k=1}^d \wik\leq \prod_{j=1}^d \delta_j^{\frac {|P_I|}{|I|}}.$$
This is precisely the content of Lemma \ref{lem:produitwk}.
The second point now follows from the first one and an application of Theorem \ref{result_gen}.
\end{proof}

\begin{corollary} \label{discont}
Let $\beta_1, \beta_2\geq -1.$ Let $\phi \in\mathcal O( \DD^d,\DD^d)\cap \mathcal{C}^1(\ovD)$. Assume that there exist $c>0,$ $I \subset \lbrace 1,\dots,d\rbrace$ such that $\phi_I$ touches the polycircle and 
$\eta \in \TTI$ such that for all $\delta \in (0,1),$ 
$$\lambda_d(\lbrace \theta \in [-\pi,\pi)^d : \m{\phi_j(\theta)-\eta_j}<\delta,\  j \in I\rbrace) \geq c f(\delta)$$
where 
$$\lim_{\delta\to 0}\frac{f(\delta)}{\delta^{|I|(2+\beta_1)-|P_I|(1+\beta_2)}}=+\infty.$$
Then $C_\phi : A^2_{\beta_1}(\DD^d) \to A^2_{\beta_2}(\DD^d)$ is not bounded. 
This holds in particular with $f(\delta)=\delta^{\alpha |I|}$ when 
$\alpha + \dfrac{\m{P_I}}{\m{I}}(1+\beta_2) <2 +\beta_1.$
\end{corollary}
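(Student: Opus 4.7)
The plan is to derive this as a direct contrapositive of the implication $(1)\Rightarrow (2)$ in Theorem \ref{result_gen}. Assuming $C_\phi$ were bounded from $A^2_{\beta_1}(\DD^d)$ to $A^2_{\beta_2}(\DD^d)$, then, for the given set $I$ for which $\phi_I$ touches the polycircle, condition (2) would furnish a constant $C>0$ such that, for every $\eta\in\TT^{|I|}$ and every $\ovd\in(0,1)^{|I|}$,
$$\lambda_d(\{\theta\in[-\pi,\pi)^d:\ |\phi_j(\theta)-\eta_j|<\delta_j,\ j\in I\})\leq C\frac{\prod_{j\in I}\delta_j^{2+\beta_1}}{\prod_{k=1}^d \omega_{I,k}(\ovd)^{1+\beta_2}}.$$

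I would then specialize to the isotropic choice $\delta_j=\delta$ for every $j\in I$, with $\delta\in(0,1)$. The numerator of the right-hand side becomes $\delta^{|I|(2+\beta_1)}$. For the denominator, note that by the very definition of $\omega_{I,k}(\ovd)$, if $\delta_j=\delta$ for all $j\in I$, then $\omega_{I,k}(\ovd)=\delta$ whenever $k\in P_I$ and $\omega_{I,k}(\ovd)=1$ otherwise; hence $\prod_{k=1}^d\omega_{I,k}(\ovd)^{1+\beta_2}=\delta^{|P_I|(1+\beta_2)}$. Thus, with the fixed $\eta$ supplied by the assumption, the necessary condition reads
$$cf(\delta)\leq C\,\delta^{|I|(2+\beta_1)-|P_I|(1+\beta_2)}\qquad\text{for all }\delta\in(0,1),$$
that is, $f(\delta)/\delta^{|I|(2+\beta_1)-|P_I|(1+\beta_2)}\leq C/c$. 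This directly contradicts the hypothesis that this ratio tends to $+\infty$ as $\delta\to 0$, so $C_\phi$ cannot be bounded.

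For the final sentence, I would simply plug in $f(\delta)=\delta^{\alpha|I|}$: the ratio $f(\delta)/\delta^{|I|(2+\beta_1)-|P_I|(1+\beta_2)}=\delta^{|I|(\alpha-(2+\beta_1))+|P_I|(1+\beta_2)}$ tends to $+\infty$ as $\delta\to 0^+$ exactly when the exponent is negative, i.e. when $\alpha+\frac{|P_I|}{|I|}(1+\beta_2)<2+\beta_1$, which is the stated condition. No obstacle is anticipated since all the work has already been done in Theorem \ref{result_gen}; the only thing to be careful about is the elementary but essential computation of $\omega_{I,k}$ for the isotropic window, which hinges on recalling that $\omega_{I,k}=1$ for indices $k\notin P_I$.
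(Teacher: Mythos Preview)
Your proof is correct and follows essentially the same approach as the paper: specialize condition (2) of Theorem \ref{result_gen} to the isotropic window $\delta_j=\delta$, compute $\prod_k\omega_{I,k}(\ovd)^{1+\beta_2}=\delta^{|P_I|(1+\beta_2)}$, and derive a contradiction with the growth hypothesis on $f$. Your write-up is in fact more detailed than the paper's, which simply records the specialized inequality and invokes the contrapositive of $(1)\Rightarrow(2)$.
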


\begin{proof}
When $\delta_1=\cdots=\delta_d=\delta,$ inequality \eqref{eq:carlesonbord} reads 
$$\lambda_d(\lbrace \theta \in [-\pi,\pi)^d : \m{\phi_j(\theta)-\eta_j}<\delta,\   j \in I\rbrace) \leq C\delta^{|I|(2+\beta_1)-|P_I|(1+\beta_2)}.$$
Therefore Corollary \ref{discont} follows from the contrapositive statement of $1) \Longrightarrow 2)$ in Theorem \ref{result_gen}.
\end{proof}

\subsection{Applications}
We now develop some applications of Theorem \ref{result_gen} and its corollaries.
We start with a stability result about the continuity of composition operators:

\begin{theorem}\label{thm:stability}
Let $\phi \in \mathcal{O}(\DD^d,\DD^d)\cap \mathcal{C}^1(\ovD)$ and $\beta_1,\beta_2\geq -1$. Assume that $C_\phi$ maps boundedly $A_{\beta_1}^2(\DD^d)$ into $A_{\beta_2}^2(\DD^d)$. Then for all $\beta'_1\geq \beta_1,$ $C_\phi$ maps boundedly $A_{\beta_1'}^2(\DD^d)$ into $A_{\beta_2'}^2(\DD^d)$ where 
$$\beta_2'=\frac{\beta_1'(\beta_2+2)+2(\beta_2-\beta_1)}{\beta_1+2}.$$
In particular, if $C_\phi$ is bounded on $A_{\beta_1}^2(\DD^d),$ then it is bounded on $A_{\beta_1'}^2(\DD^d).$
\end{theorem}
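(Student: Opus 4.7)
The plan is to translate both the hypothesis and the conclusion into the Carleson-type condition given by item 2) of Theorem~\ref{result_gen}, and then to compare the two inequalities. By Theorem~\ref{result_gen}, the hypothesis is equivalent to the existence, for every $I \subset \{1,\dots,d\}$ such that $\phi_I$ touches the polycircle, of a constant $C > 0$ such that
$$\lambda_d\bigl(\{\theta \in [-\pi,\pi)^d :\ |\phi_j(\theta)-\eta_j| < \delta_j,\ j \in I\}\bigr) \leq C\,\frac{\prod_{j \in I}\delta_j^{2+\beta_1}}{\prod_{k=1}^d \omega_{I,k}(\ovd)^{1+\beta_2}} \quad (\star)$$
holds for all $\eta \in \TT^{|I|}$ and all $\ovd \in (0,1)^{|I|}$. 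The goal is to derive the analogous estimate with $(\beta_1,\beta_2)$ replaced by $(\beta_1',\beta_2')$ and then to invoke Theorem~\ref{result_gen} once more.

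A direct computation with the formula for $\beta_2'$ gives $\beta_2'+2 = (\beta_1'+2)(\beta_2+2)/(\beta_1+2)$ and
$$\beta_2'-\beta_2 = \frac{(\beta_1'-\beta_1)(\beta_2+2)}{\beta_1+2} \geq 0.$$
Passing from $(\star)$ to its $(\beta_1',\beta_2')$-version therefore reduces to showing $\prod_{k=1}^d \omega_{I,k}(\ovd)^{\beta_2'-\beta_2} \lesssim \prod_{j \in I} \delta_j^{\beta_1'-\beta_1}$. Since each $\omega_{I,k}(\ovd) \in (0,1]$ and $\beta_2'-\beta_2 \geq 0$, raising Lemma~\ref{lem:produitwk} to the power $\beta_2'-\beta_2$ yields
$$\prod_{k=1}^d \omega_{I,k}(\ovd)^{\beta_2'-\beta_2} \leq \prod_{j \in I} \delta_j^{(|P_I|/|I|)(\beta_2'-\beta_2)}.$$
Since $\delta_j \leq 1$, the desired comparison reduces to the exponent inequality $(|P_I|/|I|)(\beta_2'-\beta_2) \geq \beta_1'-\beta_1$, which, after substituting the expression for $\beta_2'-\beta_2$, simplifies to
$$|P_I|(\beta_2+2) \geq |I|(\beta_1+2). \quad (\dagger)$$

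The main obstacle is to show that $(\dagger)$ is itself forced by the bounded composition hypothesis. The idea is to specialize $(\star)$ to $\delta_1 = \dots = \delta_{|I|} = \delta$: then $\omega_{I,k}(\ovd) = \delta$ for $k \in P_I$ and $\omega_{I,k}(\ovd) = 1$ otherwise, so the right-hand side of $(\star)$ becomes $C\delta^{|I|(2+\beta_1) - |P_I|(1+\beta_2)}$. To produce a matching lower bound on the left-hand side, I pick $\theta_0 \in \RR^d$ with $\phi_I(e^{i\theta_0}) \in \TT^{|I|}$ (which exists since $\phi_I$ touches the polycircle) and set $\eta = \phi_I(e^{i\theta_0})$; the mean value inequality, applied as in the proof of Lemma~\ref{lem:doubleinclusion}~a), shows that the box $\{\theta :\ |\theta_k - \theta_{0,k}| < c\delta,\ k \in P_I\}$, with $\theta_k$ unrestricted for $k \notin P_I$, is contained in the sublevel set; this box has $\lambda_d$-measure $\gtrsim \delta^{|P_I|}$. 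Letting $\delta \to 0$ and comparing exponents then forces $|I|(2+\beta_1) - |P_I|(1+\beta_2) \leq |P_I|$, which is exactly $(\dagger)$. The particular stability case is then immediate: substituting $\beta_2 = \beta_1$ into the formula for $\beta_2'$ gives $\beta_2' = \beta_1'$.
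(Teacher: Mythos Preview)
Your proof is correct, but it follows a different route from the paper's. The paper first establishes a separate lemma (Lemma~\ref{lem:wikcontinu}): from the boundedness of $C_\phi$ it deduces, via a box in the full polydisc and the Carleson measure characterization, that $\prod_{k=1}^d\omega_{I,k}(\ovd)^{2+\beta_2}\lesssim\prod_{j\in I}\delta_j^{2+\beta_1}$ for \emph{all} $\ovd\in(0,1)^{|I|}$. This inequality is then raised to the power $t=(\beta_1'+2)/(\beta_1+2)\geq 1$ to obtain the desired comparison of Carleson bounds directly, without invoking Lemma~\ref{lem:produitwk}. You instead extract only the diagonal case of that lemma, namely the scalar inequality $|P_I|(2+\beta_2)\geq|I|(2+\beta_1)$, by a box argument on the polytorus plugged into item~2) of Theorem~\ref{result_gen}; you then close using Lemma~\ref{lem:produitwk}. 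Your argument is self-contained within the boundary framework of Theorem~\ref{result_gen} and arguably more elementary, while the paper's route produces the stronger intermediate statement (Lemma~\ref{lem:wikcontinu}), valid for arbitrary $\ovd$, which could be of independent use.
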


In order to prove Theorem \ref{thm:stability}, we first need a lemma. 
\begin{lemma}\label{lem:wikcontinu}
Let $\phi\in\mathcal O(\DD^d,\DD^d)  \cap  \mathcal{C}^1(\ovD)$ and let $\beta_1,\beta_2\geq -1$. Assume that $C_\phi$ maps boundedly $A_{\beta_1}^2(\DD^d)$
into $A_{\beta_2}^2(\DD^d)$. Then there exists $D>0$ such that, for all $I\subset\{1,\dots,d\}$ such that $\phi_I$ touches the polycircle, 
$$\prod_{k=1}^d \wik^{2+\beta_2}\leq D\prod_{j\in I}\delta_j^{2+\beta_1}.$$
\end{lemma}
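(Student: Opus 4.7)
The plan is to apply Theorem~\ref{result_gen} at a carefully chosen $\eta\in\TT^{|I|}$, for which one can show a matching lower bound of the form
$$\lambda_d\bigl(\{\theta\in[-\pi,\pi)^d:|\phi_j(\theta)-\eta_j|<\delta_j,\ j\in I\}\bigr)\gtrsim \prod_{k=1}^d\omega_{I,k}(\ovd).$$
Dividing the upper bound obtained from Theorem~\ref{result_gen} by this lower bound will immediately give $\prod_{k=1}^d\omega_{I,k}(\ovd)^{2+\beta_2}\lesssim \prod_{j\in I}\delta_j^{2+\beta_1}$, which is the desired inequality.

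To produce the lower bound, I would fix $I$ such that $\phi_I$ touches the polycircle, choose $\theta_0\in\RR^d$ with $\phi_I(e^{i\theta_0})\in\TT^{|I|}$, and set $\eta=\phi_I(e^{i\theta_0})$. Recall that each $\phi_j$ with $j\in I$ depends only on the variables indexed by $P_j$. Applying the mean value inequality to $\phi_j\in\mathcal C^1(\ovD)$ gives a constant $M$ (depending only on $\phi$) with
$$|\phi_j(\theta)-\eta_j|\leq M\sum_{k\in P_j}|\theta_k-\theta_{0,k}|,\qquad j\in I.$$
Fix a small absolute constant $c$ such that $cMd\leq 1$. If $\theta\in\RR^d$ satisfies $|\theta_k-\theta_{0,k}|\leq c\,\omega_{I,k}(\ovd)$ for every $k\in P_I$ (the remaining coordinates being free), then for each $j\in I$, the inequality $\omega_{I,k}(\ovd)\leq \delta_j$ for $k\in P_j$ yields $|\phi_j(\theta)-\eta_j|\leq cM|P_j|\delta_j\leq\delta_j$.

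Working modulo $2\pi$ on $[-\pi,\pi)^d$, the above exhibits a product of intervals of length $\simeq \omega_{I,k}(\ovd)$ in the coordinates $k\in P_I$, together with the full interval $[-\pi,\pi)$ in each coordinate $k\notin P_I$, entirely contained in the Carleson preimage. Its Lebesgue measure is $\gtrsim \prod_{k\in P_I}\omega_{I,k}(\ovd)\cdot(2\pi)^{d-|P_I|}\simeq \prod_{k=1}^d\omega_{I,k}(\ovd)$ since $\omega_{I,k}(\ovd)=1$ whenever $k\notin P_I$. Combined with the upper bound furnished by Theorem~\ref{result_gen}, this gives
$$\prod_{k=1}^d\omega_{I,k}(\ovd)\lesssim \dfrac{\prod_{j\in I}\delta_j^{2+\beta_1}}{\prod_{k=1}^d\omega_{I,k}(\ovd)^{1+\beta_2}},$$
and rearranging yields the required estimate with a constant depending a priori on $I$.

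Since there are at most $2^d$ subsets $I\subset\{1,\dots,d\}$ for which $\phi_I$ can touch the polycircle, taking the maximum of the corresponding constants produces a single $D>0$ that works uniformly. The only mildly delicate step is the periodicity bookkeeping, i.e.\ ensuring that the small box around $\theta_0$ still contributes its full Lebesgue measure once it is identified with a subset of $[-\pi,\pi)^d$; this is immediate from the $2\pi$-periodicity of $\theta\mapsto\phi(e^{i\theta})$ and from $\omega_{I,k}(\ovd)<1<\pi$.
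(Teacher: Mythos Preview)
Your proof is correct, but it proceeds along a somewhat different route than the paper's. The paper works directly in the polydisc: it fixes $\theta_0$ with $\phi_I(e^{i\theta_0})\in\TT^{|I|}$, builds the solid set
\[
E=\{((1-\rho_k)e^{i\theta_k}):\ 0\leq\rho_k\leq\omega_{I,k}(\ovd),\ |\theta_k|\leq\omega_{I,k}(\ovd),\ k\in P_I\},
\]
shows by the mean value inequality that $E\subset\phi^{-1}(S(\phi(e),C_1\ovd))$, and then compares $V_{\beta_2}(E)\gtrsim\bigl(\prod_k\omega_{I,k}(\ovd)\bigr)^{2+\beta_2}$ (via Lemma~\ref{equiv}) with the Carleson bound $V_{\beta_2}(\phi^{-1}(S(\eta,\ovd)))\lesssim\prod_{j\in I}\delta_j^{2+\beta_1}$, which is a direct necessary condition for boundedness even when $\beta_1=-1$.

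You instead restrict to the torus, build the angular box $\{|\theta_k-\theta_{0,k}|\leq c\,\omega_{I,k}(\ovd),\ k\in P_I\}$ inside the boundary preimage, and feed its $\lambda_d$-measure into condition~(2) of Theorem~\ref{result_gen}. The underlying mean value computation is identical; what differs is that you outsource the passage from ``boundedness of $C_\phi$'' to ``boundary Carleson estimate'' to Theorem~\ref{result_gen}, whereas the paper appeals to the raw Carleson characterization and does the radial integration by hand via Lemma~\ref{equiv}. Your route is a little cleaner once Theorem~\ref{result_gen} is available (and it handles all four cases $\beta_1,\beta_2\gtrless -1$ uniformly), while the paper's argument is more self-contained and does not invoke that theorem.
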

\begin{proof}
Assume first that $\beta_2>-1.$ Let $I\subset\{1,\dots,d\}$ and $\theta_0\in\RR^d$ with $\phi_I(e^{i\theta_0})\in\TTI.$ We may assume $e^{i\theta_0}=e$. We first show that for all $\ovd\in(0,1)^{|I|},$ 
$$E:=\{((1-\rho_k)e^{i\theta_k}):\ 0\leq \rho_k\leq \wik,\ |\theta_k|\leq\wik,\ k\in P_I\}\subset \phi_I^{-1}(S_I(\phi_I(e),C_1\ovd))$$
with $C_1=2\sup\limits_l \|\nabla \phi_l\|.$ Indeed, for $j\in I,$ 
\begin{align*}
    |\phi_j(z)-\phi_j(e)|&\leq \|\nabla \phi_j\|\max_{k\in P_j}|z_k-1|\\
    &\leq \left(2\sup_l \|\nabla \phi_l\|\right)\delta_j
\end{align*}
by the mean value inequality. 
We extend $\overline\delta$ to $(0,2]^d$ by setting $\delta_j=2$ if $j\notin I$, so that 
$$E\subset\phi^{-1}(S(\phi(e),C_1\ovd)).$$
Since $C_\phi$ maps boundedly $A_{\beta_1}^2(\DD^d)$ into $A_{\beta_2}^2(\DD^d),$
we know that there exists $C_2>0$ such that 
$$V_{\beta_2}(\phi^{-1}(S(\phi(e),C_1\ovd)))\leq C_2 \prod_{j\in I}\delta_j^{2+\beta_1}.$$
We get the result since Lemma \ref{equiv} implies 
$$V_{\beta_2}(E)\gtrsim \left(\prod_{k=1}^d\wik\right)^{2+\beta_2}.$$
The result remains true for $\beta_2=-1$ by replacing $V_{\beta_2}$ by $\lambda_d$.
\end{proof}

\begin{proof}[Proof of Theorem \ref{thm:stability}]
In view of Theorem \ref{result_gen}, we only have to show that for all $I\subset\{1,\dots,d\}$ such that $\phi_I$ touches the polycircle, there exists $C>0$ such that, for all $\ovd\in(0,1)^{|I|},$
$$\frac{\prod\limits_{j\in I}\delta_j^{2+\beta_1}}{\prod\limits_{k=1}^d \wik^{1+\beta_2}}\leq 
C \frac{\prod\limits_{j\in I}\delta_j^{2+\beta'_1}}{\prod\limits_{k=1}^d \wik^{1+\beta'_2}}.$$
Let $D>0$ be given by the previous lemma. Then we write, setting $t=(\beta_1'+2)/(\beta_1+2)\geq 1,$
\begin{align*}
\frac{\prod\limits_{j\in I}\delta_j^{2+\beta_1}}{\prod\limits_{k=1}^d \wik^{1+\beta_2}}&=\prod_{k=1}^d \wik\times D^{-1}\times \left(\frac{D\prod\limits_{j\in I}\delta_j^{2+\beta_1}}{\prod\limits_{k=1}^d \wik^{2+\beta_2}}\right)\\
&\leq \prod_{k=1}^d \wik\times D^{-1}\times \left(\frac{D\prod\limits_{j\in I}\delta_j^{2+\beta_1}}{\prod\limits_{k=1}^d \wik^{2+\beta_2}}\right)^t\\
&\leq D^{t-1} \frac{\prod\limits_{j\in I}\delta_j^{2t+t\beta_1}}{\prod\limits_{k=1}^d \wik^{2t-1+t\beta_2}}\\
&\leq  D^{t-1} \frac{\prod\limits_{j\in I}\delta_j^{2+\beta'_1}}{\prod\limits_{k=1}^d \wik^{1+\beta'_2}}.
\end{align*}
\end{proof}

Part of the previous result was announced in \cite[Corollary 6]{Jaf90}, without any assumption of regularity at the boundary of $\phi.$ However, the proof of \cite{Jaf90} is not correct, since it uses Theorem C of \cite{Jaf91} with a measure which does not satisfy the assumptions of this last theorem.

\smallskip

In the particular case of the tridisc, Theorem \ref{thm:stability} can be used in conjunction with Theorem $10$ of \cite{Ko22} to lead to the following result. 

\begin{theorem}\label{thm:weighttridisc}
Let $\phi \in \mathcal{O}(\DD^3,\DD^3) \cap \mathcal{C}^2(\ovdt). $ The following propositions are equivalent: 
\begin{enumerate}
\item $C_\phi $ is bounded on $A^2(\DD^3)$.
\item For all $\beta \geq 0,$ $C_\phi $ is bounded on $A^2_\beta(\DD^3)$.
\item There exists $\beta \geq 0$ such that  $C_\phi $ is bounded on $A^2_\beta(\DD^3)$.
\item For any $\xi\in\TT^3$ such that $\phi(\xi)\in\TT^3,$ $d\phi(\xi)$ is invertible and for any $1\leq i_1<i_2\leq 3$ and any $\xi\in\TT^3$ such that $(\phi_{i_1}(\xi),\phi_{i_2}(\xi))\in\TT^2,$ either
\begin{enumerate}
    \item $\nabla\phi_{i_1}(\xi),\nabla\phi_{i_2}(\xi)$ are linearly independent, or
    \item $\frac{\partial \phi_{i_k}}{\partial z_j}(\xi)\neq 0$ for $j=1,2,3$ and $k=1,2.$
\end{enumerate}
\end{enumerate}
\end{theorem}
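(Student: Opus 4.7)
The plan is to run the cycle of implications $(4)\Rightarrow(1)\Rightarrow(2)\Rightarrow(3)\Rightarrow(4)$. The implication $(4)\Rightarrow(1)$ is exactly Theorem $10$ of \cite{Ko22}, since $A^2(\DD^3)=A^2_0(\DD^3)$. The implication $(1)\Rightarrow(2)$ follows immediately from the ``in particular'' clause of Theorem \ref{thm:stability}, applied with $\beta_1=\beta_2=0$: boundedness on $A^2_0(\DD^3)$ propagates to boundedness on $A^2_{\beta'_1}(\DD^3)$ for every $\beta'_1\geq 0$. The implication $(2)\Rightarrow(3)$ is trivial (take $\beta=0$).

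The heart of the argument is $(3)\Rightarrow(4)$. Assume that $C_\phi$ is bounded on $A^2_\beta(\DD^3)$ for some $\beta\geq 0$, and apply the necessary direction of Theorem \ref{result_gen}: for every $I\subset\{1,2,3\}$ such that $\phi_I$ touches the polycircle, every $\eta\in\TT^{|I|}$ and every $\ovd\in(0,1)^{|I|}$,
$$\lambda_3\bigl(\{\theta\in[-\pi,\pi)^3:|\phi_j(\theta)-\eta_j|<\delta_j,\ j\in I\}\bigr)\leq C\,\frac{\prod_{j\in I}\delta_j^{2+\beta}}{\prod_{k=1}^3\omega_{I,k}(\ovd)^{1+\beta}}.$$
Conditions (4) are extracted by contradiction, treating separately $|I|=3$ and $|I|=2$. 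When $|I|=3$ and $\phi(\xi)\in\TT^3$, the failure of invertibility of $d\phi(\xi)$ combined with the parametrized Morse lemma (Lemma \ref{Morse}) applied to $1-|\phi|^2$ produces a positive-dimensional degenerate fibre and forces $\lambda_3(\phi^{-1}(S(\phi(\xi),\delta e)))\gtrsim \delta^{2}$, which, against the above bound evaluated at $\delta_j=\delta$ (of order $\delta^{3}$), is a contradiction with the assumed continuity. When $|I|=2$ and $\phi_I(\xi)\in\TT^2$, the simultaneous failure of (a) and (b) is treated analogously: the linear dependence of $\nabla\phi_{i_1}(\xi),\nabla\phi_{i_2}(\xi)$ combined with the vanishing of some partial derivative produces, after a suitable choice of $\eta$ and of an asymmetric $\ovd$ exploiting the structure of $\omega_{I,k}$ when $|P_I|<3$, a Morse-type volume lower bound that overwhelms the majorant above.

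The main obstacle is the $|I|=2$ subcase: one must carefully calibrate the Carleson box (necessarily asymmetric in $\delta_1,\delta_2$) so that the lower bound on $\lambda_3(\phi_I^{-1}(S_I(\eta,\ovd)))$ coming from the vanishing of some partial derivative under linearly dependent gradients genuinely exceeds $\prod_{j\in I}\delta_j^{2+\beta}/\prod_{k=1}^3\omega_{I,k}(\ovd)^{1+\beta}$, uniformly in $\beta\geq 0$. This amounts to extending the geometric degeneracy analysis of \cite[Theorem 10]{Ko22}, originally performed for $\beta=0$, to a general weight $\beta\geq 0$; once condition (4) is thereby established, the cycle closes via $(4)\Rightarrow(1)$.
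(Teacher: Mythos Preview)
Your cycle for $(4)\Rightarrow(1)\Rightarrow(2)\Rightarrow(3)$ matches the paper exactly. The difference lies in $(3)\Rightarrow(4)$.

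The paper does not go through Theorem~\ref{result_gen} here; it works directly with the Carleson condition $V_\beta(\phi^{-1}(S(\eta,\ovd)))\lesssim V_\beta(S(\eta,\ovd))$. For $\phi(\xi)\in\TT^3$ with $d\phi(\xi)$ singular, the necessity is simply quoted from \cite{KSZ08}, which already covers every $\beta$. For the $|I|=2$ case, the paper recalls from the proof of \cite[Theorem 10]{Ko22} that when (a) and (b) both fail one can find $\alpha_1,\alpha_2\in\CC$ and $\eta\in\TT^3$ with
\[
\{z\in\DD^3:\ |\alpha_1(z_1-1)+\alpha_2(z_2-1)|<C\delta\}\subset \phi^{-1}(S(\eta,(\delta,\delta,1))),
\]
and then notes that \cite[Proposition 5]{Ko22} yields $V_\beta$ of the left side $\gtrsim\delta^{2\beta+7/2}$ while $V_\beta(S(\eta,(\delta,\delta,1)))\simeq\delta^{2\beta+4}$. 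Since $2\beta+7/2<2\beta+4$ for every $\beta$, the $\beta=0$ argument transfers verbatim: no new geometric analysis is required.

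Your alternative route through Theorem~\ref{result_gen} can be made to work, but as written it has a gap: you declare the $|I|=2$ subcase the ``main obstacle'' and then do not resolve it. Two remarks. First, the parametrized Morse lemma is a red herring in both subcases; first-order Taylor expansions of $\Imm\phi_j$ suffice. Second, no asymmetric box is needed. The observation you are missing is that when $\nabla\phi_{i_1}(\xi),\nabla\phi_{i_2}(\xi)$ are collinear and (b) fails, Lemma~\ref{JC} forces $P_{i_1}=P_{i_2}$ and hence $|P_I|\leq 2$; say $P_I\subset\{1,2\}$. Writing $\Imm\phi_{i_k}(\theta)=\kappa_k L(\theta_1,\theta_2)+O(\theta_1^2+\theta_2^2)$ with a common linear form $L$, one checks that the set $\{|L(\theta_1,\theta_2)|<c\delta,\ |\theta_1|,|\theta_2|<c\delta^{1/2}\}\times[-\pi,\pi)$ lies in $\{\theta:|\phi_{i_k}(\theta)-\phi_{i_k}(\xi)|<\delta,\ k=1,2\}$ for $c$ small, giving $\lambda_3\gtrsim\delta^{3/2}$. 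With $\delta_{i_1}=\delta_{i_2}=\delta$ the right-hand side of the Theorem~\ref{result_gen} inequality is $\delta^{2(2+\beta)-|P_I|(1+\beta)}\geq\delta^{2}$, and $\delta^{3/2}\gg\delta^{2}$ gives the contradiction, uniformly in $\beta\geq 0$.
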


 \begin{proof}
1) $\Rightarrow$ 2) This is exactly the content of Theorem \ref{thm:stability}.\\
2) $\Rightarrow$ 3) Obvious.\\
3) $\Rightarrow$ 4) To prove this implication, we just need to check if what was done in \cite{Ko22} can be extended to $\beta \geq 0. $  Let $\beta \geq 0 $  and assume that $C_\phi $ is bounded on $A^2_\beta(\DD^3).$ 
Let $\xi\in\TT^3.$ If $\phi(\xi)\in\TT^3,$ the necessary condition comes from \cite{KSZ08}. If $\phi_I(\xi)\in \TT^2$ for some $I\subset\{1,2,3\}$ with $|I|=2$, and if the necessary condition is not satisfied, the work of \cite{Ko22} shows that one can assume that there exist $C>0$, $\eta\in\TT^3$ and $\alpha_1,\alpha_2\in\mathbb C$ such that, for all $\delta\in(0,1)$, setting $\ovd=(\delta,\delta,1)$, 
$$\{z\in\DD^3:\ |\alpha_1(z_1-1)+\alpha_2(z_2-1)|<C\delta\}\subset \phi^{-1}(S(\eta,\ovd)).$$
Now, $V_{\beta}(\{z\in\DD^3:\ |\alpha_1(z_1-1)+\alpha_2(z_2-1)|<C\delta\})\gtrsim \delta^{2\beta+7/2}$ by \cite[Proposition 5]{Ko22} whereas $V_\beta(S(\eta,\ovd))\simeq \delta^{2\beta+4}.$ Hence, $C_\phi$ cannot be bounded on $A_\beta^2(\DD^3)$.\\
4) $\Rightarrow$ 1) This is Theorem 10 of \cite{Ko22}.
 \end{proof}   

As another application of Theorem \ref{result_gen}, we give an enhancement of \cite[Theorem A]{SZ06b} under the supplementary assumption that $\phi\in\mathcal C^1(\ovD).$
\begin{theorem}\label{thm:automaticcontinuity}
    Let $\phi\in\mathcal O(\DD^d,\DD^d)\cap\mathcal C^1(\ovD)$
    such that $\phi(\ovD)\cap\partial\DD^2\neq\varnothing$, let $\beta\geq -1$
    and let $d_\phi=\max(|I|:\ I\subset\{1,\dots,d\},\ \phi_I(\TT^d)\cap \TT^{|I|}\neq\varnothing\}.$ Then $C_\phi$ maps boundedly $A_\beta^2(\DD^d)$ into $A^2_{d_\phi(\beta+2)-2}(\DD^d).$ In particular, $C_\phi$ maps boundedly $A_\beta^2(\DD^d)$ into $A^2_{d(\beta+2)-2}(\DD^d).$
\end{theorem}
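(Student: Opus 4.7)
The plan is to apply Corollary \ref{cont}(b) with the choices $\beta_1=\beta$ and $\beta_2=d_\phi(\beta+2)-2$, and then to deduce the \emph{in particular} statement from $d_\phi\leq d$ together with the continuous inclusion $A^2_{\beta_2}(\DD^d)\hookrightarrow A^2_{\beta_2'}(\DD^d)$ for $\beta_2'\geq\beta_2$. Thus, for every $\theta_0\in\RR^d$ and every maximal $I\subset\{1,\dots,d\}$ with $\phi_I(e^{i\theta_0})\in\TT^{|I|}$ (which automatically forces $|I|\leq d_\phi$), I need to exhibit a neighbourhood $O$ of $\theta_0$ and a constant $C>0$ realising the local estimate with exponent $\alpha=1/|I|$:
$$\lambda_d\bigl(\{\theta\in O:\ |\phi_j(\theta)-\eta_j|<\delta_j,\ j\in I\}\bigr)\leq C\Bigl(\prod_{j\in I}\delta_j\Bigr)^{1/|I|}.$$

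I will obtain this bound by controlling the intersection through any one of its defining constraints and then invoking the elementary inequality $\min_j a_j\leq(\prod_j a_j)^{1/|I|}$. For a fixed $j\in I$, Lemma \ref{JC} furnishes some $k\in P_j$ with $\partial\phi_j/\partial z_k(e^{i\theta_0})>0$, so on the torus at $\theta_0$ at least one of $\Re\phi_j(e^{i\theta})$, $\Im\phi_j(e^{i\theta})$ has non-vanishing partial derivative in $\theta_k$; straightening out this real-valued function as a local coordinate via the implicit function theorem confines $\{\theta\in O:\ |\phi_j(\theta)-\eta_j|<\delta_j\}$ to a slab of width $\lesssim\delta_j$, uniformly in $\eta_j$ sufficiently close to $\phi_j(e^{i\theta_0})$. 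For $\eta_j$ at fixed positive distance from $\phi_j(e^{i\theta_0})$ the set is empty for small $\delta_j$ provided $O$ has been shrunk so that $\phi_j(O)$ stays near $\phi_j(e^{i\theta_0})$, and this yields a uniform bound of the same form for all $\eta_j\in\TT$. Taking the minimum over $j\in I$ and applying min-versus-geometric-mean produces the required estimate.

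It remains to check that the exponent condition of Corollary \ref{cont}(b), namely $\alpha+\tfrac{|P_I|}{|I|}(1+\beta_2)\geq 2+\beta_1$, is satisfied. Substituting $\alpha=1/|I|$, $\beta_1=\beta$, $\beta_2=d_\phi(\beta+2)-2$ and clearing denominators, the inequality reduces to $(|P_I|d_\phi-|I|)(\beta+2)\geq|P_I|-1$. Since $|I|\leq d_\phi$, one has $|P_I|d_\phi-|I|\geq(|P_I|-1)d_\phi\geq|P_I|-1$, and $\beta+2\geq 1$, so the verification is automatic; the extremal configuration $|I|=d_\phi$, $|P_I|=1$ is precisely what dictates the stated value $\beta_2=d_\phi(\beta+2)-2$. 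The main obstacle will be the single-constraint estimate: pushing the implicit function argument through \emph{uniformly} in $\eta_j\in\TT$ requires carefully gluing the local straightening near $\phi_j(e^{i\theta_0})$ to a compactness/shrinking step covering the regime where $\eta_j$ lies outside the image of $O$ under $\phi_j$.
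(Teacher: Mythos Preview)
Your proposal is correct and follows essentially the same approach as the paper. The paper isolates your single-constraint estimate as Lemma \ref{lem:meanvalue} (with a cleaner argument: holomorphy of $z_k\mapsto\phi_j$ and $\partial_{z_k}\phi_j(e^{i\theta_0})\neq 0$ give local bi-Lipschitz behaviour, so the preimage of any disc of radius $\delta_j$ has $\theta_k$-slices of length $\lesssim\delta_j$, uniformly in $\eta_j\in\CC$ without any case distinction), and then applies Theorem \ref{result_gen} directly rather than Corollary \ref{cont}(b): ordering $\delta_1\leq\cdots\leq\delta_q$, it observes that some $\omega_{I,k}(\ovd)\leq\delta_1$ (since $P_1\neq\varnothing$), hence the right side of \eqref{eq:carlesonbord} is $\geq\delta_1^{(q-d_\phi)(\beta+2)+1}\geq\delta_1$, matching the left side. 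Your route through Corollary \ref{cont}(b) with $\alpha=1/|I|$ and your exponent verification amount to the same computation repackaged.
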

The proof will need the following easy lemma.
\begin{lemma}\label{lem:meanvalue}
Let $\varphi\in \mathcal O(\DD^d,\DD) \cap \mathcal C^1(\ovD)$ and let $\theta_0\in\RR^d$ be such that $\frac{\partial \varphi}{\partial z_k}(\theta_0)\neq 0$ for some $k\in\{1,\dots,d\}$. There exist a neighbourhood $O$ of $\theta_0$ in $\RR^d$ and $C>0$ such that, for all $\eta\in\CC$ and all $\delta>0$, 
$$\lambda_d(\{\theta\in O:\ |\varphi(\theta)-\eta|<\delta\})\leq C\delta.$$
\end{lemma}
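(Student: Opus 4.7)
The plan is to reduce the problem, via Fubini, to a one-dimensional estimate along the $\theta_k$ direction. The hypothesis $\frac{\partial\varphi}{\partial z_k}(e^{i\theta_0})\neq 0$ translates into a non-vanishing derivative of $\theta\mapsto \varphi(e^{i\theta})$ in the real variable $\theta_k$: indeed, $\frac{\partial}{\partial\theta_k}\varphi(e^{i\theta})=ie^{i\theta_k}\frac{\partial\varphi}{\partial z_k}(e^{i\theta})$, which has modulus $|\frac{\partial\varphi}{\partial z_k}(e^{i\theta_0})|>0$ at $\theta_0$. In particular, writing $\varphi=u+iv$, at least one of $\frac{\partial u}{\partial\theta_k}(\theta_0)$ or $\frac{\partial v}{\partial\theta_k}(\theta_0)$ is nonzero; without loss of generality, assume $\frac{\partial u}{\partial\theta_k}(\theta_0)\neq 0$.

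By continuity of $\nabla u$ (which follows from $\varphi\in \mathcal C^1(\ovD)$), there exists a product neighbourhood $O=O'\times J$ of $\theta_0$ in $\RR^d$ (with $O'\subset\RR^{d-1}$ and $J\subset\RR$ an open interval in the $\theta_k$ coordinate) and a constant $c>0$ such that $\left|\frac{\partial u}{\partial\theta_k}(\theta)\right|\geq c$ for all $\theta\in O$. Hence, for any fixed $\theta'\in O'$, the map $t\mapsto u(\theta',t)$ is strictly monotone on $J$ with derivative of modulus at least $c$.

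Using the trivial inclusion $\{|\varphi-\eta|<\delta\}\subset\{|u-\Re\eta|<\delta\}$, the one-dimensional slice satisfies
$$\lambda_1\bigl(\{t\in J:\ |u(\theta',t)-\Re\eta|<\delta\}\bigr)\leq \frac{2\delta}{c},$$
by the mean value theorem applied to the strictly monotone function $t\mapsto u(\theta',t)$. Fubini's theorem then gives
$$\lambda_d\bigl(\{\theta\in O:\ |\varphi(\theta)-\eta|<\delta\}\bigr)\leq \lambda_{d-1}(O')\cdot \frac{2\delta}{c}=:C\delta,$$
which is the claim. The only subtlety is choosing the \emph{real} direction (real or imaginary part of $\varphi$) in which $u$ has non-vanishing derivative; once this is identified, everything reduces to a standard one-variable estimate, so there is no real obstacle here.
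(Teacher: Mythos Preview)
Your proof is correct and follows essentially the same strategy as the paper: reduce via Fubini to a one-dimensional estimate in the $\theta_k$ direction, where the non-vanishing derivative bounds the measure of each slice by $O(\delta)$. The only cosmetic difference is that the paper works directly with the complex-valued $\varphi$ and the inequality $|\varphi(\theta)-\varphi(\theta')|\geq c|\theta_k-\theta_k'|$ (relying on $\partial\varphi/\partial z_k>0$, which holds in all their applications by the Julia--Carath\'eodory lemma), whereas you pass to a real component $u$ to make the monotonicity argument explicit---arguably the cleaner route in the generality stated.
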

\begin{proof}
    To fix the notations we assume that $k=1$ so that $\frac{\partial \varphi}{\partial z_1}(\theta_0)>0.$ There exist a bounded neighbourhood $O=O_1\times O'_1$ with $O_1\subset \RR$ of $\theta_0$ and $c>0$  such that, for all $\theta\in O,$ $\frac{\partial \varphi}{\partial z_1}(\theta)\geq c.$ This implies that, for all $(\theta_2,\dots,\theta_d)\in O'_1$ and all $\theta_1,\theta'_1\in O_1,$
    $$|\varphi(\theta_1,\theta_2,\dots,\theta_d)-\varphi(\theta'_1,\theta_2,\dots,\theta_d)|\geq c|\theta_1-\theta'_1|.$$
    This yields that, given $(\theta_2,\dots,\theta_d)\in O'_1,$
    $\{\theta_1\in O_1:\ |\varphi(\theta_1,\dots,\theta_d)-\eta|<\delta\rbrace $ is contained in an interval of length $4\delta/c$. The lemma then follows from an application of Fubini's theorem.
\end{proof}
\begin{proof}[Proof of Theorem \ref{thm:automaticcontinuity}]
We show that Condition 3) of Theorem \ref{result_gen} is satisfied. Let $\theta_0\in\RR^d$ and $I\subset\{1,\dots,d\}$ maximal be such that $\phi_I(\theta_0)\in\TTI.$ We assume that $I=\{1,\dots,q\}$ with $q\leq d_\phi$ and let $O,C>0$ be given by Lemma \ref{lem:meanvalue} and working for $\phi_1,\dots,\phi_q$. 
Let $\eta\in\TTI$, $\ovd\in(0,1)^{|I|}$ and let us assume that $\delta_1\leq\cdots\leq\delta_q$. Then since $P_1\neq\varnothing,$ at least one $\omega_{I,k}(\ovd)$ is less than or equal to $\delta_1$ so that 
\begin{align*}
    \frac{\dprod_{j\in I}\delta_j^{2+\beta}}{\dprod_{k=1}^d \omega_{I,k}(\ovd)^{d_\phi(\beta+2)-1}}&\geq \frac{\dprod_{j\in I}\delta_j^{2+\beta}}{\delta_1^{d_\phi(\beta+2)-1}}\\
    &\geq \delta_1^{(q-d_\phi)(\beta+2)+1}\\
    &\geq \delta_1.
\end{align*}
On the other hand,
\begin{align*}
    \lambda_d(\{\theta\in O:\ |\phi_j(\theta)-\eta_j|<\delta_j,\ j\in I\})&\leq \lambda_d (\{\theta\in O:\ |\phi_1(\theta)-\eta_1|<\delta_1\})\\
    &\leq C\delta_1.
\end{align*}
This shows that $C_\phi$ maps boundedly $A_\beta^2(\DD^d)$ into $A^2_{d_\phi(\beta+2)-2}(\DD^d).$
\end{proof}

Notice that the index $\beta' = {d_\phi(\beta+2)-2}$ for $C_\phi : \Abd \rightarrow A^2_{\beta'}(\DD^d)$ to be bounded is optimal. Indeed, consider the function $\phi \in \mathcal{O}(\DD^d,\DD^d)\cap \mathcal C^1(\overline \DD^d) $ defined by $\phi(z) = (z_1,\dots, z_1,0,\dots, 0).$ For $j =1 ,\dots,d_\phi, $ we can write $\phi_j(\theta) = 1+ i\theta_1+O(\tu^2).$ Thus, for $\delta>0$ small enough, $$ [-\delta/2,\delta/2]\times [-\pi,\pi)^{d-1} \subset \lbrace \theta \in [-\pi, \pi)^d : \m{\phi_j(\theta) - 1 } <\delta,\ j=1,\dots,d_\phi \rbrace$$ and $\lambda_d( [-\delta,\delta]\times [-\pi,\pi)^{d-1} ) \gtrsim \delta. $ Applying Corollary \ref{discont}, we find that $C_\phi : \Abd \rightarrow A_{\beta'}^2(\DD^d)$ is not bounded for $\beta' < d_\phi(2+\beta)-2$. 

\medskip

We finish this subsection with a class of symbols which will be used several times throughout the paper.

\begin{theorem}\label{thm:examples}
Let $\beta_1,\beta_2\geq -1.$ Let $\varphi\in\mathcal O(\DD,\DD)\cap \mathcal C^1(\overline\DD)$. Assume that, for any $z\in \overline\DD,$ $|\varphi(z)|=1$ if and only if $z=1$ and that there exist $\kappa>1$, $c>0$ such that
$$|\varphi(e^{i\theta})|=1-c|\theta|^\kappa +o(|\theta|^\kappa).$$ Let $d\geq k\geq 1$ and let us set
$$\phi(z)=(\varphi(z_1)\cdots \varphi(z_k)z_{k+1}\cdots z_{d},\dots,\varphi(z_1)\cdots \varphi(z_k)z_{k+1}\cdots z_{d},0,\dots,0)$$ where the first $q$ coordinates, $q\in\{1,\dots,d\}$, are identical. Then $C_\phi$ maps boundedly $A_{\beta_1}^2(\DD^d)$ into $A_{\beta_2}^2(\DD^d)$ if and only if 
$$d\beta_2\geq \left(2q-d-1-\frac k\kappa\right)+q\beta_1.$$
\end{theorem}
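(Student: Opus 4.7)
The plan is to apply Theorem~\ref{result_gen} directly, exploiting the rigid structure of~$\phi$. First I would identify the relevant subsets $I$: since $\phi_{q+1}=\cdots=\phi_d\equiv 0$ and $\phi_1=\cdots=\phi_q$, the only non-empty $I\subset\{1,\ldots,d\}$ for which $\phi_I$ touches the polycircle are $I\subset\{1,\ldots,q\}$. For such an $I$, $|\phi_j(e^{i\theta})|=1$ forces $|\varphi(e^{i\theta_\ell})|=1$ for $\ell=1,\ldots,k$, hence, after normalising so that $\varphi(1)=1$, the boundary-touching set is $\{\theta_1=\cdots=\theta_k=0\}\times[-\pi,\pi)^{d-k}$. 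Differentiating shows $\frac{\partial\phi_j}{\partial z_\ell}=\varphi'(1)$ for $\ell\leq k$ and $=1$ for $\ell>k$ at such a point, and neither vanishes (for $\ell\leq k$, Lemma~\ref{JC} rules out $\varphi'(1)=0$, which would otherwise force $\varphi$ to be constant). Therefore $P_j=\{1,\ldots,d\}$ for every $j\in\{1,\ldots,q\}$, so $P_I=\{1,\ldots,d\}$ and $\omega_{I,k}(\ovd)=\delta^{*}:=\min_{j\in I}\delta_j$ for every $k\in\{1,\ldots,d\}$.

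The key computation is the two-sided Carleson estimate
$$\lambda_d\bigl\{\theta\in[-\pi,\pi)^d:\ |\phi_j(\theta)-\eta_j|<\delta_j,\ j\in I\bigr\}\simeq (\delta^{*})^{1+k/\kappa}.$$
Since $\phi_1=\cdots=\phi_q$, the set is contained in $\{|\phi_1(\theta)-\eta^{*}|<\delta^{*}\}$ where $\eta^{*}=\eta_{j^{*}}$ achieves the minimum of $\delta_j$. The asymptotic $|\varphi(e^{i\theta})|=1-c|\theta|^{\kappa}+o(|\theta|^{\kappa})$ together with $|\phi_1(\theta)|=\prod_{\ell=1}^{k}|\varphi(e^{i\theta_\ell})|$ forces $|\theta_\ell|\lesssim(\delta^{*})^{1/\kappa}$ for $\ell\leq k$; then for fixed $\theta_1,\ldots,\theta_k,\theta_{k+2},\ldots,\theta_d$, the identity $\arg\phi_1(\theta)=\sum_{\ell=1}^{k}\arg\varphi(e^{i\theta_\ell})+\theta_{k+1}+\cdots+\theta_d$ combined with $|\arg(\phi_1(\theta)/\eta^{*})|\lesssim\delta^{*}$ restricts $\theta_{k+1}$ to an interval of length $\lesssim\delta^{*}$. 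Fubini-style integration yields the upper bound, and using the same inequalities as lower bounds (with $\eta_j\equiv 1$ and $\delta_j\equiv\delta$) gives the matching lower bound.

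For sufficiency I would insert this estimate into condition~(2) of Theorem~\ref{result_gen} and show
$$(\delta^{*})^{1+k/\kappa}\lesssim\frac{\prod_{j\in I}\delta_j^{2+\beta_1}}{(\delta^{*})^{d(1+\beta_2)}}.$$
Writing $\delta_j=\delta^{*}\alpha_j$ with $\alpha_j\geq 1$, this becomes $(\delta^{*})^{E}\lesssim\prod_{j\in I}\alpha_j^{2+\beta_1}$ with $E=1+k/\kappa+d(1+\beta_2)-|I|(2+\beta_1)$; since the right-hand side is at least~$1$ and $\delta^{*}\in(0,1)$, the inequality holds uniformly if and only if $E\geq 0$ for every non-empty $I\subset\{1,\ldots,q\}$. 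The most restrictive constraint occurs at $|I|=q$, yielding exactly $d\beta_2\geq q\beta_1+2q-d-1-k/\kappa$. For necessity I would apply Corollary~\ref{discont} with $I=\{1,\ldots,q\}$, $\eta=(1,\ldots,1)$ and $f(\delta)=\delta^{1+k/\kappa}$: whenever the stated inequality fails strictly, the hypothesis of Corollary~\ref{discont} holds, so $C_\phi$ cannot be bounded; the equality case is already covered by sufficiency. The principal technical step is the sharp two-sided Carleson estimate above; once the polar form of $\varphi$ is parametrised by its boundary asymptotic, the remainder is bookkeeping with $\omega_{I,k}=\delta^{*}$.
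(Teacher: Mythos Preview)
Your proposal is correct and follows essentially the same route as the paper. Both arguments establish the key two-sided estimate $\lambda_d\{\theta:|\phi_j(\theta)-\eta_j|<\delta_j,\ j\in I\}\simeq(\min_{j\in I}\delta_j)^{1+k/\kappa}$ by combining the modulus asymptotic $|\varphi(e^{i\theta})|=1-c|\theta|^\kappa+o(|\theta|^\kappa)$ (which controls $\theta_1,\ldots,\theta_k$) with the fact that $\arg\phi_1$ depends linearly on one of the remaining variables (which pins down one more coordinate to an interval of length $\simeq\delta^*$); necessity is then Corollary~\ref{discont}. The only difference is packaging: the paper first makes the change of variables $u_d=\theta_{k+1}+\cdots+\theta_d$ and then passes through Corollary~\ref{cont}(b), which requires only the maximal $I=\{1,\ldots,q\}$ and the product bound $(\delta^*)^{1+k/\kappa}\leq\bigl(\prod_{j\in I}\delta_j\bigr)^{(1+k/\kappa)/q}$, whereas you compute $\omega_{I,m}(\ovd)=\delta^*$ explicitly and insert directly into condition~(2) of Theorem~\ref{result_gen}, verifying all non-empty $I\subset\{1,\ldots,q\}$ and noting that $|I|=q$ is the binding constraint. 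These lead to the identical numerical criterion.
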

\begin{proof}
By definition of $\phi$, for $j\in\{1,\dots,q\}$ and $e^{i\theta}\in \TT^d$, we know that $$\phi_j(\eio) =e^{i(\theta_{k+1}+\dots+\theta_{d})}\varphi(e^{i\theta_1})\cdots \varphi(e^{i\theta_k}).$$ Doing the linear change of variables $u_d = \theta_{k+1}+\dots+\theta_{d}, \ u_1=\theta_1,\dots,\ u_{d-1}=\theta_{d-1}$, we get $\phi_j(e^{iu}) =  e^{iu_d}\varphi(e^{iu_1})\cdots\varphi(e^{iu_{k}}).$ We will apply the second point of Corollary \ref{cont} and Corollary \ref{discont} with $\m{I}  = q $ and $\m{P_I} = d.$
We shall estimate the $\lambda_d$-volumes with respect to the new coordinates $u_1,\dots,u_d$ which does not change the conclusion.

First, we focus on the discontinuity of the operator. Let $\delta \in (0,1)$. Let  $u_j$ be such that $\m{u_j}^\kappa \leq \dfrac{\delta}{c},$ $j=1,\dots,k.$ Then, provided $\delta$ is small enough, $\m{\varphi(e^{iu_j})} \geq 1-2\delta$. We write $\varphi(e^{iu_1})\cdots\varphi(e^{iu_k}) = \rho e^{i\alpha}$ with $\rho\geq 1-2d\delta$ and $\alpha \in [-\pi,\pi)$. Now, consider $I_{u_1,\dots,u_k} = [-\alpha-\delta, -\alpha+\delta]\cap[-\pi,\pi).$ For $u_d\in I_{u_1,\dots,u_k},$ we write $u_d = -\alpha + \gamma$, with $|\gamma|<\delta$ and by the triangle inequality, we get 
\begin{align*}
    \m{\phi_j(e^{iu})- 1 } = \m{e^{i(-\alpha+\gamma)}\rho e^{i\alpha}-1} \leq \m{\rho e^{i\gamma }-\rho }+\m{\rho - 1 } \leq (2d+1) \delta.
\end{align*}
Hence, 
$E:=\{u\in[-\pi,\pi)^d:\ u_1,\dots,u_k\in[-(\frac{\delta}{c})^{1/\kappa}, (\frac{\delta}{c})^{1/\kappa}],\ u_d\in I_{u_1,\dots,u_k},\ ( u_{k+1},\dots,u_{d-1})\in[-\pi,\pi)^{d-1-k}\}$ is contained in $\lbrace u \in [-\pi,\pi)^d : \m{\phi_j(e^{iu})- 1 }<(2d+1)\delta $ for all $j=1,\dots,q\rbrace$ so that 
$$\lambda_d(E)\gtrsim \delta^{1+k/\kappa}. $$
Thus, applying Corollary \ref{discont}, we get that $C_\phi$ is not bounded if 
$$\dfrac{1}{q}\left(1+\dfrac{k}{\kappa}\right) + \dfrac{d}{q}\left(1+\beta_2\right) < 2+\beta_1  \Leftrightarrow d\beta_2< \left(2q-d-1-\frac k\kappa\right)+q\beta_1.$$

Conversely, let us study the continuity of the operator. The only maximal $I$ for which $\phi_I(e^{i\theta_0})\in\TTI$ for some $\theta_0\in\RR^d$ is $I=\{1,\dots,q\}$. Let $\eta \in \TT^q$  and  $\overline\delta \in (0,1)^q$. Let $j \in \I{q}$. We can notice that $\m{\phi_j(e^{iu})-\eta_j }<\delta_j$ implies that
for $l=1,\dots,k,$
$$1-c\m{u_l}^\kappa+o(\m{u_l}^\kappa) = \m{\varphi(e^{iu_l})} \geq 1-\delta_j.$$
Hence there exists a neighbourhood $O$ of $0$ in $\RR^d$ such that, if $u\in O$ and $e^{iu}\in\phi_I^{-1}(S(\eta,\ovd)),$ then $\m{u_l}\lesssim \min(\delta_j)^{1/\kappa},\ l=1,\dots,k.$ Now, fix $u_1,\dots,u_k$ such that $\m{u_l}\lesssim \min(\delta_j)^{1/\kappa},\ l=1,\dots,k,$ and write ${\varphi(e^{iu_1})\cdots \varphi(e^{iu_k}) = \rho e^{i\alpha}}$.  We have 
\begin{align*}
\m{\phi_j(e^{iu}) - \eta_j} <\delta_j &\Rightarrow  \m{\rho e^{i(u_d+\alpha)} - \eta_j } <\delta_j\\
& \Rightarrow  \left|e^{iu_d} - \frac{\eta_je^{-i\alpha}}{\rho}\right|<\frac{\delta_j}{\rho} \\
&\Rightarrow \left|\sin(u_d)-a(\eta_j,u_1,\dots,u_k)\right|<2\delta_j
\end{align*}
provided $\delta_j\leq 1/2$ which implies $\rho\geq 1/2.$
Shrinking $O$ if necessary, we may assume that for all $u\in O,$ $(\sin )'(u_d)=\cos(u_d)\geq 1/2.$ Hence, by the triangle inequality, $u_d$ belongs to an interval $I(\eta_j,u_1,\dots,u_k)$ whose length is less than $8 \delta_j.$ Finally, we have found a neighbourhood $O$ of $0$ in $\RR^d$ such that  $$\lambda_d(\lbrace u \in O :\  \m{\phi_j(e^{iu})- \eta_j}<\delta_j,\ j \in I\rbrace) \lesssim \min(\delta_j)^{1+\frac k\kappa}\leq \left(\prod_{j\in I}\delta_j\right)^{\frac 1q\left(\frac k\kappa+1\right)}.$$
Applying the second point of Corollary \ref{cont}, we get that $C_\phi$ maps continuously $A^2_{\beta_1}(\DD^d)$ into $A^2_{\beta_2}(\DD^d)$ as soons as $$d\beta_2 \geq 2q-d-1-\frac{k}{\kappa}+q\beta_1.$$
\end{proof}

The tridisc case for $q=2$ and $k=1$ writes:

\begin{corollary}\label{coro}
Let $\varphi\in \mathcal O(\DD,\DD)\cap \mathcal C^1(\overline\DD)$. Assume that, for any $z\in \overline\DD,$ $|\varphi(z)|=1$ if and only if $z=1$ and that there exist $\kappa>1$, $c>0$ such that
$$|\varphi(e^{i\theta})|=1-c|\theta|^\kappa +o(|\theta|^\kappa).$$
We set $\phi(z)=(\varphi(z_1)z_2z_3,\varphi(z_1)z_2z_3,0).$ Then $C_\phi$ is bounded on $A_\beta^2(\DD^3)$ if and only if ${\beta\geq -1/\kappa.}$
\end{corollary}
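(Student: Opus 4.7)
The plan is to view Corollary \ref{coro} as a direct specialization of Theorem \ref{thm:examples}. The hypotheses on $\varphi$ are identical, so the only work is an arithmetic verification that the inequality characterizing boundedness reduces to $\beta \geq -1/\kappa$ when $d=3$, $q=2$, $k=1$, and $\beta_1=\beta_2=\beta$.

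Substituting these values into the criterion $d\beta_2 \geq (2q-d-1-k/\kappa) + q\beta_1$ of Theorem \ref{thm:examples}, one obtains
\[
3\beta \;\geq\; \bigl(4-3-1-\tfrac{1}{\kappa}\bigr)+2\beta \;=\; -\tfrac{1}{\kappa}+2\beta,
\]
which simplifies immediately to $\beta \geq -1/\kappa$. So no further estimate is required: the corollary is formally obtained by quoting Theorem \ref{thm:examples} after noting that $\phi(z)=(\varphi(z_1)z_2z_3,\varphi(z_1)z_2z_3,0)$ is exactly the symbol built in that theorem with first two coordinates identical, the third coordinate being $0$.

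If one wished to be more self-contained, one could redo the two one-line arguments that appear in the proof of Theorem \ref{thm:examples} in this simplified setting: for the discontinuity direction, construct the set $E$ of $u\in[-\pi,\pi)^3$ of measure $\gtrsim \delta^{1+1/\kappa}$ on which $|\phi_j(e^{iu})-1|\lesssim \delta$ for $j=1,2$, and apply Corollary \ref{discont} with $|I|=2$, $|P_I|=3$, so the failure of the inequality $\alpha + \tfrac{3}{2}(1+\beta) \geq 2+\beta$ with $\alpha=\tfrac{1}{2}(1+\tfrac{1}{\kappa})$ yields $\beta < -1/\kappa$. For the continuity direction, the same kind of reasoning (bounding each $|u_l|$ by $\min(\delta_j)^{1/\kappa}$ for $l=1$, and then controlling $u_3$ in an interval of length $\lesssim \delta_j$) gives the volume estimate $\lesssim \min(\delta_j)^{1+1/\kappa}\leq\bigl(\delta_1\delta_2\bigr)^{\frac{1}{2}(1+1/\kappa)}$, and Corollary \ref{cont}(b) concludes.

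I do not expect any real obstacle: the result is a clean corollary, and the only step requiring attention is making sure the exponent bookkeeping $|I|=2$, $|P_I|=3$ matches the conclusion $\beta\geq -1/\kappa$, which it does.
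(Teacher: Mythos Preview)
Your proposal is correct and matches the paper exactly: the corollary is simply stated as the tridisc case $d=3$, $q=2$, $k=1$, $\beta_1=\beta_2=\beta$ of Theorem~\ref{thm:examples}, with no separate proof given. Your arithmetic check $3\beta\geq -1/\kappa+2\beta\iff\beta\geq -1/\kappa$ is exactly what is needed, and the optional self-contained sketch you outline is precisely the argument of Theorem~\ref{thm:examples} in this special case.
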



\section{\texorpdfstring{$N$-th order derivatives are not enough}{N-th order derivatives are not enough}}
\label{sec:norder}
In this section, we show that the knowledge of the $N$-th order derivatives at the boundary points of the symbol is not sufficient to know if the induced composition operator is continuous on some $A^2_\beta(\DD^d)$. We first need to exhibit two functions which have the same first derivatives and which have a different behaviour. We fix $n\geq 1$ and we consider the two functions $g_n$ and $h_n$ defined by 
\begin{align*}
    g_n(z) &=  z^n-\dfrac{1}{2^n}\left( z- \left(\dfrac{1+z^2}{2}\right)\right)^n\\
    h_n(z) &= z^{2n}-\dfrac{z^n}{2^n}\left( z- \left(\dfrac{1+z^2}{2}\right)\right)^n.
\end{align*}

\noindent {\bf Fact 1.} $g_n(\overline{\DD}) \subset \overline{\DD},\ h_n(\overline{\DD})\subset \overline{\DD}$ and $\m{g_n(z)} = 1 \Leftrightarrow \m{h_n(z)}=1 \Leftrightarrow z=1$.

\begin{proof}
Let $\theta \in \mathbb{R}.$ We have 
\begin{align*}
    g_n(\eio) = e^{in\theta} - \dfrac{1}{2^n}\left(\eio - \dfrac{1+e^{2i\theta}}{2}\right)^n  = e^{in\theta}\left(1-\left(\dfrac{1-\cos(\theta)}{2}\right)^n\right). 
\end{align*}
Thus, $\m{g_n(\eio)} = \left\vert 1-\left(\dfrac{1-\cos(\theta)}{2}\right)^n\right\vert$. However, $0\leq \dfrac{1-\cos(\theta)}{2} \leq 1$ so that $\m{g_n(\eio)} \leq 1$. Moreover, 
\begin{align*}
    \m{g_n(\eio)} = 1 \Leftrightarrow \dfrac{1-\cos(\theta)}{2} = 0 \Leftrightarrow \theta = 0\ [2\pi].
\end{align*}
Finally, $g_n(\overline{\DD}) \subset \overline{\DD}$ and $\m{g_n(z)} = 1 \Leftrightarrow z=1$. We deduce the results for $h_n$ if we observe that $h_n(z)=z^n g_n(z).$
\end{proof}

\medskip

\noindent {\bf Fact 2.} For all $k\in \I{n}$, we have $g_{2n}^{(k)}(1) =h_n^{(k)}(1)=2n\cdots (2n-k+1)$. 

\begin{proof}
Setting $u_n(z)=\left(z-\left(\frac{1+z^2}2\right)\right)^n,$
this fact easily follows from Leibniz's formula if we are able to prove that $u_n^{(k)}(1)=0$ for all $k=0,\dots,n$. This last property can be proved by induction, using $u_{n+1}(z)=u_n(z)u_1(z).$ 
\end{proof}

\medskip

We now prove a variant of Theorem \ref{thm:nthderivative} where we fix $\beta$ and we allow the dimension to change.

\begin{theorem}\label{thm:nthderivativegeneral}
    Let $\beta \geq -1$ and $n\in\mathbb N$. There exist $d\geq 3,$ $\phi,\psi\in\mathcal O(\DD^d,\DD^d)\cap\mathcal C^\infty(\overline{\DD}^d)$ such that 
    \begin{itemize}
\item $\mathcal E(\phi)=\mathcal E(\psi)$.
\item For any $\xi\in\mathcal E(\phi),$ for any $\alpha\in\mathbb Z_+^d$ with $|\alpha|\leq n,$ $\partial^\alpha \phi(\xi)=\partial^\alpha \psi(\xi)$.
\item $C_\phi$ is continuous on $A_\beta^2(\DD^d)$ whereas $C_\psi$ is not.
\end{itemize}
\end{theorem}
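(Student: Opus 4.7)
My plan is to apply Theorem~\ref{thm:examples} with two different inner functions, namely $\varphi=h_n$ for $\phi$ and $\varphi=g_{2n}$ for $\psi$. By Fact~2 these two polynomials have equal derivatives at $z=1$ up to order $n$, while by Fact~1 they both touch $\TT$ only at $z=1$, but with two different orders of tangency. Indeed, using $(1-\cos\theta)/2=\sin^2(\theta/2)\sim \theta^2/4$, a direct computation gives
$$|h_n(e^{i\theta})|=|g_n(e^{i\theta})|=1-\frac{\theta^{2n}}{4^n}+o(\theta^{2n}),\qquad |g_{2n}(e^{i\theta})|=1-\frac{\theta^{4n}}{16^n}+o(\theta^{4n}),$$
so in the notation of Theorem~\ref{thm:examples}, $h_n$ has $\kappa_\phi=2n$ while $g_{2n}$ has $\kappa_\psi=4n$.

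For integers $q,s\geq 1$ and $1\leq k\leq q+s$, I would set $d=q+s$ and define
$$\phi(z)=(\Phi(z),\ldots,\Phi(z),0,\ldots,0),\qquad \Phi(z)=h_n(z_1)\cdots h_n(z_k)z_{k+1}\cdots z_d,$$
with $q$ identical first coordinates and $d-q$ zeros, and analogously $\psi$ by replacing each $h_n$ by $g_{2n}$. Theorem~\ref{thm:examples}, applied with $\beta_1=\beta_2=\beta$, says that $C_\phi$ (resp.\ $C_\psi$) is bounded on $A^2_\beta(\DD^d)$ if and only if $\beta\geq\beta^*_\phi$ (resp.\ $\beta\geq\beta^*_\psi$), where
$$\beta^*_\phi=\frac{2q-d-1-k/(2n)}{d-q}<\beta^*_\psi=\frac{2q-d-1-k/(4n)}{d-q}.$$
The whole game is then to choose $q,s,k$ so that $\beta$ lies in $[\beta^*_\phi,\beta^*_\psi)$. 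Taking $k=4n$, this condition rewrites as $q\in(s(1+\beta)+2,\, s(1+\beta)+3]$, a half-open interval of length exactly $1$, which therefore contains a unique integer $q^*$. Any $s\geq 4n$ then gives $k=4n\leq q^*+s=d$ and $d\geq 3$, so the construction produces valid $\phi,\psi$ on $\DD^d$ for every $\beta\geq -1$.

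To finish, I would verify the two remaining items. First, since $|h_n(\zeta)|=|g_{2n}(\zeta)|=1$ only at $\zeta=1$, one easily sees that $\mathcal E(\phi)=\mathcal E(\psi)=\{1\}^k\times\TT^{d-k}$. Second, for $\xi=(1,\ldots,1,e^{i\theta_{k+1}},\ldots,e^{i\theta_d})\in\mathcal E(\phi)$ and any multi-index $\alpha\in\ZZ_+^d$ with $|\alpha|\leq n$, direct differentiation gives, for $j\leq q$,
$$\partial^\alpha\phi_j(\xi)=\left(\prod_{i=1}^k h_n^{(\alpha_i)}(1)\right)\prod_{i=k+1}^d\partial_i^{\alpha_i}z_i\Big|_{z_i=\xi_i},$$
and the analogous formula holds for $\psi$ with $g_{2n}^{(\alpha_i)}(1)$ replacing $h_n^{(\alpha_i)}(1)$. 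Since each $\alpha_i\leq n$, Fact~2 (together with the trivial case $\alpha_i=0$, for which $h_n(1)=g_{2n}(1)=1$) yields $h_n^{(\alpha_i)}(1)=g_{2n}^{(\alpha_i)}(1)$, hence $\partial^\alpha\phi(\xi)=\partial^\alpha\psi(\xi)$. For $j>q$ both sides vanish trivially.

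I expect the main obstacle to be the parameter juggling in the second paragraph: for an arbitrary $\beta\geq -1$, one has to produce integers $q,s,k,d$ satisfying the hypotheses of Theorem~\ref{thm:examples} while placing $\beta$ strictly between the two thresholds $\beta^*_\phi$ and $\beta^*_\psi$. The key observation is that choosing $k=4n$ forces the relevant interval for $q$ to have length exactly $1$, which in turn guarantees an integer solution regardless of $\beta$ and so trivialises the parameter search.
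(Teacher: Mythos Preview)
Your proof is correct and follows essentially the same approach as the paper: both constructions use $h_n$ for $\phi$, $g_{2n}$ for $\psi$, take $k=4n$, and then apply Theorem~\ref{thm:examples} after choosing the remaining parameters so that $\beta$ falls between the two thresholds. The only difference is bookkeeping: the paper parametrizes via $(a,b)\in\NN\times\ZZ$ with $0\le \beta-b/a<1/a$ and sets $d=2a+b+3$, $q=a+b+3$ (so that $d-q=a$ plays the role of your $s$), whereas your choice of $s$ and the observation that the interval $(s(1+\beta)+2,\,s(1+\beta)+3]$ has length~$1$ amounts to the same arithmetic in a slightly cleaner form.
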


\begin{proof}
Let us consider $(a,b)\in\mathbb N\times\mathbb Z$ such that 
$$0\leq \beta-\frac ba<\frac ba$$
and $a>4n.$ We set $d=2a+b+3,$ $q=a+b+3,$ $k=4n$ and
\begin{align*}
    \phi(z)&=(h_{n}(z_1)\cdots h_{n}(z_k)z_{k+1}\cdots z_{d},\dots,h_{n}(z_1)\cdots h_{n}(z_k)z_{k+1}\cdots z_{d},0,\dots,0)\\
    \psi(z)&=(g_{2n}(z_1)\cdots g_{2n}(z_k)z_{k+1}\cdots z_{d},\dots,g_{2n}(z_1)\cdots g_{2n}(z_k)z_{k+1}\cdots z_{d},0,\dots,0)
\end{align*}
where the first $q$ coordinates are identical. Observe that $d>k$ since $b\geq -a.$
By Fact 1, $\phi$ and $\psi$ belong to $\mathcal O(\DD^d,\DD^d)\cap\mathcal C^\infty(\overline \DD^d)$ and $\mathcal E(\phi)=\mathcal E(\psi)=\{(1,\dots,1)\}\times \TT^{d-k}.$ Moreover, Fact 2 yields $\partial^\alpha\phi(\xi)  = \partial^\alpha\psi(\xi)$ for all $\m{\alpha}\leq n$ and all $\xi \in \mathcal E(\phi)$.\\
To discuss the continuity of $C_\phi$ and $C_\psi,$ we observe that
\begin{align*}
    \m{h_n(\eio)} = \left\vert 1-\left(\dfrac{1-\cos(\theta)}{2}\right)^n\right\vert = 1-\dfrac{\theta^{2n}}{4^{n}}+O(\theta^{2n})
\end{align*}
and 
\begin{align*}
    \m{g_{2n}(\eio)} = \left\vert 1-\left(\dfrac{1-\cos(\theta)}{2}\right)^{2n}\right\vert = 1-\dfrac{\theta^{4n}}{4^{2n}}+O(\theta^{4n}). 
\end{align*}
Now we get
$$(d-q)\beta -\left(2q-d-1-\frac{4n}{2n}\right)=a\beta-b\geq 0$$
 whereas 
$$(d-q)\beta-\left(2q-d-1-\frac{4n}{4n}\right)=a\beta-b-1< 0.$$
Therefore Theorem \ref{thm:examples} yields the continuity of $C_\phi$ on $A_\beta^2(\DD^d)$ and the discontinuity of $C_\psi$ on $A^2_\beta(\DD^d).$
\end{proof}

The proof of Theorem \ref{thm:nthderivative} exactly follows the same lines. We consider $\phi$ and $\psi$ defined  for all $z \in \DD^3$ by 
$$ \phi(z) = (h_n(z_1)z_2z_3,h_n(z_1)z_2z_3,0) \hspace{5mm} \text{ and } \hspace{5mm} \psi(z)=(g_{2n}(z_1)z_2z_3,g_{2n}(z_1)z_2z_3,0).$$ 

Applying Corollary  \ref{coro}, we get that $C_\phi$ is bounded on $\Ab$ if and only if $\beta \geq \dfrac{-1}{2n}$ and $C_\psi$ is bounded on $\Ab$ if and only if $\beta \geq \dfrac{-1}{4n}$. In particular, for $\beta \in \left[\dfrac{-1}{2n},\dfrac{-1}{4n}\right),$ we have that $C_\phi$ is bounded on $\Ab$ whereas $C_\psi$ is not. 

\medskip

Theorems \ref{thm:nthderivative} and \ref{thm:nthderivativegeneral} lead to the following natural question.

\begin{question}
Let $d\geq 3$ and $n\in\mathbb N$. For which values of $\beta\geq -1$ do there exist
$\phi,\psi\in\mathcal O(\DD^d,\DD^d)\cap\mathcal C^n(\overline{\DD}^d)$ such that 
    \begin{itemize}
\item $\mathcal E(\phi)=\mathcal E(\psi)$,
\item for any $\xi\in\mathcal E(\phi),$ for any $\alpha\in\mathbb Z_+^d$ with $|\alpha|\leq n,$ $\partial^\alpha \phi(\xi)=\partial^\alpha \psi(\xi)$,
\item $C_\phi$ is continuous on $A_\beta^2(\DD^d)$ whereas $C_\psi$ is not ?
\end{itemize}
\end{question}

 We do not know the answer to this question but we can at least prove that this holds for  $$\beta \in \bigcup\limits_{q=1}^{d-1}\left[\dfrac{1}{d-q}\left(2q-1-d-\dfrac{d-1}{2n} \right) , \dfrac{1}{d-q}(2q-1-d)  \right)$$ \\
Indeed, for $n \in \NN$ and $p\geq 1$ we consider $$H_n(z) = z^{n(p+1)} + \dfrac{z^{pn}}{2^n}\left(z+\left(\dfrac{1+z}{2}\right)^2\right)^n =  z^{pn}g_n(z). $$ Then, as it was done for $h_n$, we can show that $\m{H_n(z)}\leq 1$ for all $z \in \DD$ and $\m{H_n(z)} = 1  $ if and only if $z=1.$ Moreover, in the same way as the proof of Fact 2,  we can show that $H_n^{(k)}(1) = g_{(p+1)n}^{(k)}(1) = (p+1)n\cdots ((p+1)n-k+1)$ for all $k \in \I{n}.$   Then, consider 
\begin{align*}
    \phi(z) &= (H_n(z_1)\cdots H_n(z_{d-1})z_d, \dots,H_n(z_1)\cdots H_n(z_{d-1})z_d,0,\dots,0) \\
    \psi(z)& =  (g_{(p+1)n}(z_1)\cdots g_{(p+1)n}(z_{d-1})z_d,\dots,g_{(p+1)n}(z_1)\cdots g_{(p+1)n}(z_{d-1})z_d,0,\dots,0)
\end{align*}
where the $q-$first coordinates are identical. Then, $\phi, \psi \in \mathcal{O}(\DD^d,\DD^d) \cap \mathcal{C}^n(\overline \DD^d)$ with $\mathcal{E}(\phi) = \mathcal E(\psi) $ and  $\partial^\alpha\phi(\xi) =  \partial^\alpha\psi(\xi)$ for all $\xi \in \mathcal E(\phi) $ and all $\alpha \in \ZZ_+$ such that $\m{\alpha} \leq n.$ To study the continuity of $C_\phi$ and $C_\psi$, observe that 
\begin{align*}
    \m{H_n(\eio)  } &= 1- \dfrac{\theta^{2n}}{4^n}+O(\theta^{2n}) \\
    \m{g_{(p+1)n}(\eio) } &= 1 - \dfrac{\theta^{2(p+1)n}}{4^{(p+1)n}} + O(\theta^{2(p+1)n}).
\end{align*}
Applying Theorem \ref{thm:examples}, we get that $C_\phi : \Abd \rightarrow \Abd $ is bounded if and only if 
$$\beta \geq \dfrac{1}{d-q}\left(2q-d-1-\dfrac{d-1}{2n}\right) $$
and $C_\psi : \Abd \rightarrow \Abd $ if and only if 
$$\beta \geq  \dfrac{1}{d-q}\left(2q-d-1-\dfrac{d-1}{2n(p+1)}\right). $$
Letting $p$ to $+\infty$, we find that for 
$\beta \in \bigcup\limits_{q=1}^{d-1}\left[\dfrac{1}{d-q}\left(2q-d-1-\dfrac{d-1}{2n}\right),\dfrac{1}{d-q}\left(2q-d-1\right) \right), $ there exist $\phi, \psi \in \mathcal{O}(\DD^d,\DD^d) \cap \mathcal{C}^n(\overline \DD^d)$ with $\mathcal{E}(\phi) = \mathcal E(\psi) $ and  $\partial^\alpha\phi(\xi) =  \partial^\alpha\psi(\xi)$ for all $\xi \in \mathcal E(\phi) $ and all $\alpha \in \ZZ_+$ with $\m{\alpha} \leq n $ such that $C_\phi$ is bounded on $\Abd$ while $C_\psi$ is not. 

\section{Two different weights}

In this section, we shall prove the following general version of Theorem \ref{thm:weights}.
\begin{theorem}
Let $d\geq 3$ and let $-1\leq \beta_1<\beta_2\leq d-3.$ Then there exist $\phi\in\mathcal O(\DD^d,\DD^d)\cap \mathcal C^1(\overline{\DD}^d)$ such that $C_\phi$ is continuous on $A^2_{\beta_2}(\DD^d)$ and $C_\phi$ is not continuous on $A^2_{\beta_1}(\DD^d).$
\end{theorem}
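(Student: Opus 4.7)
The plan is to invoke Theorem~\ref{thm:examples} with the symbol
$$\phi(z)=(\varphi(z_1)\cdots\varphi(z_k)z_{k+1}\cdots z_d,\ldots,\varphi(z_1)\cdots\varphi(z_k)z_{k+1}\cdots z_d,0,\ldots,0)$$
whose first $q$ coordinates are identical, where $\varphi\in\mathcal O(\DD,\DD)\cap\mathcal C^1(\overline\DD)$ satisfies $|\varphi(z)|=1$ if and only if $z=1$ and $|\varphi(e^{i\theta})|=1-c|\theta|^\kappa+o(|\theta|^\kappa)$ for some $\kappa>1$ that we are free to tune. Specializing Theorem~\ref{thm:examples} to $\beta_1=\beta_2=\beta$, $C_\phi$ is bounded on $A_\beta^2(\DD^d)$ if and only if $\beta\geq\beta^*:=(2q-d-1-k/\kappa)/(d-q)$, so the problem reduces to producing admissible $(q,k,\kappa,\varphi)$ with $\beta^*\in(\beta_1,\beta_2]$.

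I would fix $q=d-1$, which gives $\beta^*=d-3-k/\kappa$, and seek $r:=k/\kappa$ in $[d-3-\beta_2,d-3-\beta_1)$. The hypothesis $\beta_2\leq d-3$ ensures this interval lies in $[0,d-2)$ and is non-empty. If $\beta_2<d-3$, take $r=d-3-\beta_2>0$, pick any $k\in\{1,\ldots,d\}$ with $k>r$ (possible since $r<d-2<d$), and set $\kappa=k/r>1$; this yields $\beta^*=\beta_2\in(\beta_1,\beta_2]$. If $\beta_2=d-3$, take $k=1$ and any $\kappa>\max(1,1/(d-3-\beta_1))$, producing $\beta^*=d-3-1/\kappa\in(\beta_1,d-3)\subset(\beta_1,\beta_2]$.

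The remaining task is to exhibit, for the chosen (in general irrational) $\kappa>1$, a function $\varphi$ fulfilling the hypotheses of Theorem~\ref{thm:examples}. I would take the outer function
$$\varphi(z)=\exp\left(\frac{1}{2\pi}\int_{-\pi}^{\pi}\frac{e^{i\theta}+z}{e^{i\theta}-z}\,u(\theta)\,d\theta\right),$$
where $u$ is a $2\pi$-periodic function that equals $-|\theta|^\kappa$ on a neighbourhood of $0$, is $\mathcal C^\infty$ on $\TT\setminus\{1\}$, and is strictly negative there. By construction $|\varphi(e^{i\theta})|=e^{u(\theta)}=1-|\theta|^\kappa+o(|\theta|^\kappa)$ as $\theta\to 0$, and $|\varphi|<1$ elsewhere on $\overline\DD$ by the maximum principle.

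The main technical obstacle is the regularity claim $\varphi\in\mathcal C^1(\overline\DD)$, which is not automatic for non-integer $\kappa$. Here the key input is Privalov's theorem on the harmonic conjugate: for $\kappa\in(1,2]$, $u$ belongs to $\mathcal C^{1,\kappa-1}(\TT)$, and so does its harmonic conjugate $v$; therefore $\log\varphi=u+iv$ extends to a $\mathcal C^{1,\kappa-1}$-function on $\overline\DD$, and so does $\varphi$. The same argument applies with higher smoothness for $\kappa>2$. Plugging this $\varphi$ back into the symbol above and applying Theorem~\ref{thm:examples} one last time proves that $C_\phi$ is bounded on $A^2_{\beta_2}(\DD^d)$ while failing to be bounded on $A^2_{\beta_1}(\DD^d)$, as desired.
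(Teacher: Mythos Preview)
Your proof is correct and shares the paper's overall strategy of invoking Theorem~\ref{thm:examples} with $q=d-1$, but it differs from the paper's argument in two respects worth noting.

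First, the construction of $\varphi$: the paper obtains $\varphi$ as a Riemann map onto the interior of a Jordan curve $r(\theta)=1-f(\theta)$ with $f(\theta)\sim|\theta|^\kappa$, and appeals to the Kellogg--Warschawski theorem for $\mathcal C^1$ regularity up to the boundary. Your outer-function construction together with Privalov's theorem is equally valid and perhaps more transparent, since the boundary modulus $|\varphi(e^{i\theta})|=e^{u(\theta)}$ is prescribed explicitly.

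Second, the paper fixes $k=1$ throughout, which forces $\beta^*=d-3-1/\kappa\in(d-4,d-3)$; to reach values $\beta_2\le d-4$ the paper then performs a dimension reduction, first solving the problem in $\DD^q$ for a suitable $q<d$ and then padding with zeros. By allowing $k$ to range over $\{1,\dots,d\}$ you hit every value $\beta^*\in(-3,d-3)$ directly and thereby avoid this extra case split. This is a genuine simplification, at the cost of having to keep track of one more parameter.

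One minor remark: when $\kappa$ is an integer (e.g.\ $\kappa=2$) your sentence ``$u\in\mathcal C^{1,\kappa-1}$'' lands on the endpoint exponent where Privalov's theorem is not directly stated, but this is harmless since in that range $u$ has strictly more regularity and one can apply Privalov with any H\"older exponent in $(0,1)$.
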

\begin{proof}
Assume first that $\beta_2>d-4.$ Let $\kappa >1$ be such that $\beta_1 < d-3-\dfrac{1}{\kappa} <\beta_2$. We set $\alpha = \kappa -\lfloor \kappa \rfloor $. Let $f : \mathbb{R} \rightarrow [1/2,1]$ be such that
\begin{itemize}
\item $f$ is $\mathcal{C}^{\lfloor \kappa \rfloor, \alpha}$-smooth on $\RR$ and $2\pi$-periodic.
\item $f(\theta) = 1 \Leftrightarrow  \theta = 0\ [ 2\pi]$.
\item $f(\theta)  \sim_0 \m{\theta}^\kappa. $
\end{itemize}
Now, consider $\Gamma$ the Jordan curve defined by the following equation 
$$ r(\theta) = 1-f(\theta).$$
We have $\Gamma \subset \overline{\DD}.$ Applying Riemann's and Kellogg- Warschawski's theorems, we get that there exists a  $\mathcal{C}^1$ function  $\varphi : \overline{\DD} \rightarrow \overline{\DD}$  such that $\varphi : \DD \rightarrow\DD$ is holomorphic, $\varphi$ is a conformal map that maps $\DD$ onto $\Delta$, where $\Delta$ is the interior of $\Gamma$, and $|\varphi(z)|=1$ if and only if $z=1$. For all $\theta \in \RR,$ we have $\varphi(\eio) \in \Gamma$ so $$1-\m{\varphi(\eio)} = f(\arg(\varphi(\eio))).$$
But, for $\theta$ close to $0,$ $\arg(\varphi(\eio)) = \arctan\left(\dfrac{\Imm(\varphi(\eio))}{\Ree(\varphi(\eio))}\right)$ and $$\varphi(\eio) = 1+\varphi'(1)(\eio-1)+o(\theta) = 1+i\varphi'(1)\theta + o(\theta).$$ Thus, $\arg(\varphi(\eio))  = \arctan(\varphi'(1)\theta+o(\theta)) = \varphi'(1)\theta+o(\theta).$ We deduce that 
$$1-\m{\varphi(\eio)} \sim_0 \varphi'(1)\m{\theta}^\kappa  \Rightarrow \m{\varphi(\eio)} =1-\varphi'(1)\m{\theta}^\kappa +o(\m{\theta}^\kappa). $$
Now, we consider $\phi : \DD^d \rightarrow \DD^d$ defined 
as in Theorem \ref{thm:examples} with $q=d-1,$ $k=1$ and this specific $\varphi.$ 
Since 
$$d\beta_2\geq 2q-d-1-\frac 1\kappa+q\beta_2$$
and 
$$d\beta_1\leq 2q-d-1-\frac 1\kappa+q\beta_1,$$
$C_\phi$ is the composition operator we were looking for. 

If $\beta_2\leq d-4,$ let $q\in[3,d-1]$ be such that $q-4<\beta_2\leq q-3$. There exists $\psi\in \mathcal O(\DD^q,\DD^q)\cap \mathcal C^1(\overline\DD^q)$ such that $C_\psi$ is bounded on $A^2_{\beta_2}(\DD^q)$ and $C_\psi$ is not bounded on $A^2_{\beta_1}(\DD^q)$. We come back to $\DD^d$ by setting
$$\phi(z_1,\dots,z_d)=(\psi(z_1,\dots,z_q),\dots,\psi(z_1,\dots,z_q),0,\dots,0)$$
where the first $q$ coordinates are identical.
\end{proof}
In particular, for $-1\leq \beta_1<\beta_2,$ we can always distinguish continuity on $A_{\beta_1}^2(\DD^d)$ and $A_{\beta_2}^2(\DD^d)$ provided we allow the dimension to be large enough. It is interesting to know the lowest dimension $d$ for which this is possible. For $d=3,$ the bound $d-3$ on $\beta_2$ is optimal by Theorem \ref{thm:weighttridisc}. This leads to the following question.

\begin{question}
    Let $d\geq 4$ and let $\phi\in \mathcal O(\DD^d,\DD^d)\cap\mathcal C^1(\ovD)$. Is it true that if $C_\phi$ is continuous on $A_\beta^2(\DD^d)$ for some $\beta\geq d-3,$ then it is continuous on $A_\beta^2(\DD^d)$ for all $\beta\geq d-3$ ? 
\end{question}

We can observe that if $\beta_1<d-3\leq \beta_2,$ there exists a composition operator $C_\phi$ which is bounded on $A^2_{\beta_2}(\DD^d)$ and which is unbounded on $A_{\beta_1}^2(\DD^d)$ (see \cite[Corollary 6]{Ko22}), given by $\phi(z)=(z_1\cdots z_d,\dots,z_1\cdots z_d,0)$.


\section{About continuity on the tridisc} \label{sec:tridisc}
In this section, we  focus on the continuity of the composition operator $C_\phi$ on the Bergman space $\Ab$. We provide necessary and sufficient conditions for $C_\phi$ to be bounded on $\Ab$. We begin by recalling the terminology of \cite{Baytridisc}. Let $\phi \in \mathcal O(\mathbb{D}^3,\mathbb{D}^3) \cap  \mathcal{C}^3(\overline{\mathbb{D}}^3)$. Assume that there exist ${I \subset \lbrace 1,2,3 \rbrace}$ with $\vert I \vert = 2$, $I = \lbrace i_1,i_2 \rbrace$ and $\xi \in \mathbb{T}^3$ such that $\phi_I(\xi) \in \mathbb{T}^2$ and such that $\nabla\phi_{i_1}(\xi),$ $\nabla\phi_{i_2}(\xi)$ are linearly dependent. \\
We can assume that $\xi = e =  (1,1,1),\ I = \lbrace 1, 2 \rbrace$ and $\phi_I(e) = (1,1)$. For $z \in \mathbb{T}^3$, we write \\
$z = (e^{i\theta_1},e^{i\theta_2},e^{i\theta_3})$ with $\theta_i \in \mathbb{R}$, $i=1,2,3$. Thanks to a second-order Taylor expansion at $e$ for $\phi_j$, $j = 1,2$, we obtain: 
$$  \phi_j(z) = 1 + \displaystyle\sum_{k=1}^3 a_{j,k}(z_k - 1) + \displaystyle\sum_{k,l =1}^3 \beta_{j,k,l}(z_k-1)(z_l-1) + O(\vert z_1-1\vert^3 + \vert z_2-1\vert^3 + \vert z_3-1\vert^3).$$
By Julia-Caratheodory's theorem, we know  that $a_{j,k} \geq 0$ for $k =1,2, 3$ and $j= 1, 2$. At fixed $j$, all $a_{j,k}$ cannot be simultaneously equal to zero otherwise $\phi_j$  is constant which is not the case. \\
Finally, by a Taylor expansion of $e^{i\tk}$,  we get: 
\begin{align*}
\text{Re}\phi_j(\theta) &=  1 - \dss_{k=1}^3 a_{j,k}\frac{\tk^2}{2}
- \dss_{k,l =1 }^3 \text{Re}(\beta_{j,k,l})\tk\tl +O\left(\vert\theta_1\vert^3+\vert\theta_2\vert^3+\vert\theta_3\vert^3\right)\\
&= 1 - Q_j(\theta) + O\left(\vert\theta_1\vert^3+\vert\theta_2\vert^3+\vert\theta_3\vert^3\right)
\end{align*}
where $Q_j$ is a quadratic form and 
\begin{align*}
\text{Im}\phi_j(\theta) &= \dss_{k=1}^3 a_{j,k}\tk
- \dss_{k,l =1 }^3 \text{Im}(\beta_{j,k,l})\tk\tl+ O\left(\vert\theta_1\vert^3+\vert\theta_2\vert^3+\vert\theta_3\vert^3\right).
\end{align*}
As $\nabla\phi_1(e)$ and $\nabla\phi_2(e)$ are linearly dependent and nonzero, we can write 
$$\text{Im}\phi_j(\theta)=
 \kappa_j L(\theta) - \dss_{k,l =1 }^3 \text{Im}(\beta_{j,k,l})\tk\tl+ O\left(\vert\theta_1\vert^3+\vert\theta_2\vert^3+\vert\theta_3\vert^3\right)
$$
where $L$ is a fixed nonzero linear form and $\kappa_j\in\mathbb R^*,$ $j=1,2.$
 Each $Q_j$ is a quadratic form of signature $(n_j,0)$ so it can be written $Q_j= \dss_{k = 1}^{n_j} L_{j,k}^2$ with ${sign(Q_1+Q_2) = (m,0)}$. Moreover, 
$$\text{span}(L_{j,k},\ j= 1,2, \ k= 1, \dots, n_j)^\perp =\text{ker}(Q_1+Q_2),$$ 
thus dim(span)$^\perp = 3-m$. We then consider $(L_1,...,L_m)$ a basis of $\text{span}(L_{j,k})$. The linear form $L$ belongs to span$(L_1,..., L_m)$. Indeed, if $L \notin \text{span}(L_{j,k}),$ then there exists $\theta \in \RR^3 $ such that $L(\theta) \neq 0$ but $Q_j(\theta) = 0$ for $j=1,2$. Therefore, for $\veps$ small enough, we have 
\begin{align*}
 \m{\phi_j(\veps\theta)}^2& = 1-2\veps^2Q_j(\theta)+ \kappa_j^2\veps^2 L(\theta)^2+ O(\veps^3) \\
 & = 1+\kappa_j^2\veps^2 L(\theta)^2+ O(\veps^3)\\
 &>1 ,  
\end{align*}
a contradiction. Then we complete $(L_1,\dots, L_m)$ to have $(L_1,L_2,L_3)$ a basis of $(\mathbb{R}^3)^*$ and  we can write 
$$ \text{Im}\phi_j = \kappa_j\left( L+ \dss_{\substack{1\leq k\leq m \\ k\leq l \leq 3}}b_{j,k,l}L_kL_l+\dss_{m+1\leq k\leq l \leq 3}b_{j,k,l}L_kL_l\right)+ O(\m{\tu}^3+\m{\td}^3+\m{\tr}^3).$$
We define the quadratic form $R$ by 
$$ R =\kappa_2 \dss_{m+1\leq k\leq l \leq 3}b_{1,k,l}L_kL_l - \kappa_1 \dss_{m+1\leq k\leq l \leq 3}b_{2,k,l}L_kL_l.$$ 
Finally we set
\begin{center}
$s(\phi,I,\xi) = m \hspace{0.5mm} \text{ and } \hspace{0.5mm} r(\phi,I,\xi) = sign(R).$
\end{center}
It is shown in \cite{Baytridisc} that $r(\phi,I,\xi)$ does not depend on the way we define the linear forms $L_j.$
One can also notice that if we consider $\phi$ defined by $\phi = (\phi_1, \phi_1,0)$ then $R= 0$, thus $r(\phi,I,\xi) =(0,0)$.

\medskip

The notations $s(\phi,I,\xi)$ and $r(\phi,I,\xi)$ were introduced in \cite{Baytridisc} in order to give a characterization of bounded composition operators on $H^2(\DD^3)$. In this section we investigate how their knowledge helps us to determine whether $C_\phi$ is continuous on $A^2_{\beta}(\DD^3)$ or not. 
Before stating the main result of this section, let us start by some results of boundedness and non-boundedness. 

\begin{theorem}\label{noncont}
Let $\phi \in \mathcal{O}(\DD^3,\DD^3) \cap \mathcal{C}^3(\ovdt)$. Assume that there exist $ I \subset \lbrace 1,2,3 \rbrace$, with $\m{I} = 2,$ $I=\lbrace i_1,i_2\rbrace$ and $ {\xi\in \TT^3}$ such that $ {\phi_I(\xi)\in \TT^2}$ and $\nabla\phi_{i_1}(\xi), \nabla\phi_{i_2}(\xi)$ are linearly dependent.
\begin{itemize}
\item If $\m{P_I} = 3$, then
\begin{enumerate}[(a)]
\item If $s(\phi, I,\xi) = 1$ and $r(\phi,I,\xi) = (0,0)$ then $C_\phi$ is not bounded on $\Ab$ for $\beta  < \dfrac{-2}{3}$,
\item  If $s(\phi, I,\xi) = 1$ and $r(\phi,I,\xi) = (1,0)$ or $(0,1)$ then $C_\phi$ is not bounded on $\Ab$ for ${\beta  < \dfrac{-5}{6}}$. 
\end{enumerate}
\item If $\m{P_I}  \leq 2,$ then $C_\phi : \Ab \rightarrow \Ab $ is never bounded for any $\beta \geq -1$. 
\end{itemize}
\end{theorem}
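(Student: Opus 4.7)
The plan is to apply Corollary \ref{discont} in each sub-case by exhibiting an explicit set in $\theta$-space on which all conditions $|\phi_j(\theta)-\eta_j|<\delta$, $j\in I$, hold simultaneously and whose Lebesgue measure provides the required lower bound. After the usual reduction (assume $\xi=e$, $I=\{1,2\}$, $\phi_I(e)=(1,1)$, $\eta=(1,1)$), I would use the second-order Taylor expansion of $\phi_j$ at $e$ in the linear coordinates $(L_1,L_2,L_3)$ constructed just before the theorem. Since $s=1$, $Q_j=\alpha_j L_1^2$ with $\alpha_j>0$ and $L=c_0L_1$ with $c_0\neq 0$, so
$$\phi_j(\theta)-1 = -\alpha_j L_1^2 + i\kappa_j\bigl(c_0 L_1 + A_j(L_1,L_2,L_3)\bigr) + O(|\theta|^3),$$
where $A_j$ is a real quadratic form whose pure $(L_2,L_3)$-restriction is the ``hard'' quantity to control.

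For Case 1(a), $R=0$ encodes that the pure $(L_2,L_3)$-quadratic parts of $\kappa_1 A_1$ and $\kappa_2 A_2$ are proportional. A quadratic correction of $L_1$ in $(L_2,L_3)$ (justified by the parametrized Morse lemma, Lemma \ref{Morse}) collapses the two conditions $|\Imm\phi_j|<\delta$ essentially into a single constraint of the form $|\widetilde L_1|\lesssim\delta$. Restricting the remaining two directions to size $\lesssim\delta^{1/3}$ to control the cubic remainder $O(|\theta|^3)$, one obtains a set of $\lambda_3$-measure $\gtrsim \delta\cdot\delta^{1/3}\cdot\delta^{1/3}=\delta^{5/3}$. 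Since $|I|(2+\beta)-|P_I|(1+\beta)=1-\beta$, Corollary \ref{discont} yields the non-boundedness precisely for $\beta<-2/3$.

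For Case 1(b), $R=\pm M^2$ for some nonzero linear form $M\in\mathrm{span}(L_2,L_3)$. After the analogous Morse-type change of variables, the conditions $|\Imm\phi_j|<\delta$ now reduce to $|\widetilde L_1|\lesssim\delta$ together with the weaker constraint $|M|\lesssim\sqrt\delta$; the third direction $N$ complementary to $M$ in $\mathrm{span}(L_2,L_3)$ is free up to the cubic bound $|N|\lesssim\delta^{1/3}$. This yields a set of measure $\gtrsim\delta\cdot\sqrt\delta\cdot\delta^{1/3}=\delta^{11/6}$, and Corollary \ref{discont} gives non-boundedness for $\beta<-5/6$.

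For Case 2, $|P_I|\le 2$: linear dependence of the nonzero gradients $\nabla\phi_{i_1}$, $\nabla\phi_{i_2}$ forces $P_1=P_2=P_I$, so by Lemma \ref{JC} $\phi_I$ depends only on the variables indexed by $P_I$, and the preimage of the Carleson box factors as $E'\times[-\pi,\pi)^{3-|P_I|}$. An analog of the previous analysis in dimension $|P_I|$ (from the 1D Julia--Carath\'eodory expansion when $|P_I|=1$, and an in-plane version of Case 1 when $|P_I|=2$) yields $\lambda_{|P_I|}(E')\gtrsim\delta$, hence $\lambda_3\gtrsim\delta$. Since $|I|(2+\beta)-|P_I|(1+\beta)\ge 2$ for every $\beta\ge-1$, $\delta/\delta^{|I|(2+\beta)-|P_I|(1+\beta)}\to+\infty$ and Corollary \ref{discont} gives non-boundedness on every $A^2_\beta(\DD^3)$ with $\beta\ge -1$. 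The main obstacle throughout is the simultaneous handling of the two conditions $|\Imm\phi_j|<\delta$ in the presence of the (possibly nonzero) pure $(L_2,L_3)$-quadratic parts of the $\kappa_j A_j$: a naive cancellation $\kappa_2\Imm\phi_1-\kappa_1\Imm\phi_2$ does not control the individual imaginary parts, which is precisely why the parametrized Morse lemma is needed. Cubic remainders further force any otherwise unconstrained coordinate direction to be capped at size $\lesssim\delta^{1/3}$.
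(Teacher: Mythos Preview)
Your approach is essentially the paper's: apply Corollary \ref{discont} after exhibiting an explicit region in $\theta$-space of large enough measure, using the second-order normal form of $\phi_j$ at $e$. Two minor remarks.

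In Case 1(a) the paper does not need the parametrized Morse lemma. The hypothesis $R=0$ forces the pure $(L_2,L_3)$-quadratic parts of $A_1$ and $A_2$ to be \emph{equal} (not merely proportional): writing $a_j\theta_2^2+b_j\theta_2\theta_3+c_j\theta_3^2$, one gets $a_1=a_2$, $b_1=b_2$, $c_1=c_2$. Hence the single explicit shift $\theta_1\in\{|\theta_1-a\theta_2^2-b\theta_2\theta_3-c\theta_3^2|\le\delta\}$ controls \emph{both} imaginary parts simultaneously, and one simply takes $|\theta_2|,|\theta_3|\le\delta^{1/3}$. Your Morse justification works but is heavier than needed. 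In Case 1(b) the paper straightens $\Imm\phi_1$ via the diffeomorphism $U(\theta)=(\Imm\phi_1/\kappa_1,\theta_2,\theta_3)$, which is exactly your ``Morse-type change of variables''.

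Your bound $\lambda_{|P_I|}(E')\gtrsim\delta$ for $|P_I|=2$ is too optimistic. An in-plane version of Case 1 only gives $\lambda_2(E')\gtrsim\delta^{3/2}$: once $L(\theta)=\theta_1$, the term $a_j\theta_2^2$ in $\Imm\phi_j$ forces $|\theta_2|\lesssim\delta^{1/2}$ in general (the paper simply takes $\theta_1\in[-\delta,\delta]$, $\theta_2\in[-\delta^{1/2},\delta^{1/2}]$). This slip is harmless for the conclusion, since $|I|(2+\beta)-|P_I|(1+\beta)=2$ when $|P_I|=2$ and $\delta^{3/2}/\delta^2\to\infty$, so Corollary \ref{discont} still applies for every $\beta\ge-1$.
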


\begin{proof}
We intend to apply Corollary \ref{discont}.
Without loss of generality, we may assume that $I = \lbrace 1,2\rbrace$, $\xi =(1,1,1)$ and $\phi_I(\xi) =(1,1)$. Because it only depends on what happens on the unit polycircle, making linear changes of variables on the coordinates $(\tu,\td,\tr)$ will not change the volume estimates. Thus, in everything that follows, we will always assume that $L(\theta)  = \theta_1.$  We start by assuming that ${\m{P_I } = 3.}$ First, suppose that $s(\phi,I,\xi) =1$ and $r(\phi,I,\xi) = (0,0)$. 
We can assume that ${Q_j(\theta) = \gamma_j\theta_1^2}$ with $\gamma_j>0$, $j = 1,2$ because $sign(Q_1(\theta)+Q_2(\theta)) =1 $ and since, $L(\theta) = \theta_1 $ and $s(\phi,I,\xi) =1$, we can choose $L_2(\theta) = \theta_2$ and $L_3(\theta) = \theta_3.$ 
 We write for $j=1,2,$
\begin{align*}
\text{Re}\phi_j(\theta) &= 1 - \gamma_j\tu^2+O\left(\vert\theta_1\vert^3+\vert\theta_2\vert^3+\vert\theta_3\vert^3\right)\\
\text{Im}\phi_j(\theta) &= \kappa_j\left( \theta_1-a_j\theta_2^2 -b_j\td\tr-c_j\tr^2\right)+O\left( \tu^2+ \vert \tu\td\vert+\vert \tu\tr\vert +\vert\theta_2\vert^3+\vert\theta_3\vert^3\right).
\end{align*}
However, as 
$$R(\td,\tr)  = \kappa_2\kappa_1\left(-a_1\theta_2^2 -b_1\td\tr-c_1\tr^2\right)- \kappa_1\kappa_2\left(-a_2\theta_2^2 -b_2\td\tr-c_2\tr^2\right) = 0$$
and $\kappa_2\kappa_1 \neq  0,$ we have 
$$a_1 = a_2 = a, \hspace{5mm} b_1 = b_2 = b \hspace{5mm} \text{ and } \hspace{5mm} c_1 = c_2 = c.$$
Thus,
\begin{align*}
&\text{Im}\phi_j(\theta) = \kappa_1 (\tu -a\theta_2^2 -b\td\tr-c\tr^2)+O\left( \tu^2+ \vert \tu\td\vert+\vert \tu\tr\vert +\vert\theta_2\vert^3+\vert\theta_3\vert^3\right)\\
&\text{Re}\phi_j(\theta) = 1 - \gamma_j\tu^2+O\left(\vert\theta_1\vert^3+\vert\theta_2\vert^3+\vert\theta_3\vert^3\right).
\end{align*}
Let $\delta \in (0,1)$ and let us consider $(\td, \tr) \in [ -\delta^{1/3}, \delta^{1/3}]^2$. For those $(\td, \tr)$, we set $I_{\td,\tr} := \left\lbrace \tu :  \vert \tu - a\td^2- b\td\tr - c\tr^2\vert \leq \delta \right\rbrace.$ For $\tu \in I_{\td,\tr}$, we can easily check that $\vert \tu \vert \lesssim \delta^{2/3}$ and we have
\begin{align*}
&\vert \text{Im}\phi_j(\theta) \vert  \lesssim \delta + O(\delta^{4/3}+\delta +\delta+\delta+\delta)\lesssim \delta\\
&\vert\text{Re}\phi_j(\theta)-1\vert \lesssim \delta^{4/3}+O(\delta^2+\delta+\delta) \lesssim  \delta.
\end{align*}
Since
$$\lambda_3\left(\left\{\theta\in\RR^3:\ (\theta_2,\theta_3)\in[-\delta^{1/3},\delta^{1/3}]^2,\ \theta_1\in  I_{\td,\tr}\right\}\right) \gtrsim \delta^{1+2/3}, $$
applying Corollary \ref{discont} with  $\m{I} = 2 $,  $\m{P_I} = 3$ and $\alpha = \dfrac 5 6,$ we get that $C_\phi$ is not bounded on $\Ab$ for $\beta < \dfrac{-2}{3} $. \\

\noindent Now, suppose that $s(\phi,I,\xi) =1$ and $r(\phi,I,\xi) = (1,0)$ or $(0,1)$. 
$R$ is a quadratic form with signature equal to $(1,0)$ or $(0,1)$ so, doing a linear change of variables, we may assume that $R(\td,\tr) = \pm\tr^2$. Thus, we write for $j=1,2,$ 
\begin{align*}
&\text{Re}\phi_j(\theta) = 1 - \gamma_j\tu^2+ O\left(\vert\theta_1\vert^3+\vert\theta_2\vert^3+ \m{\tr}^3\right)\\
&\text{Im}\phi_j(\theta) = \kappa_j(\theta_1-h_j\tu\td-l_j\tu\tr-a_j\theta_2^2-b_j\td\tr-c_j\tr^2) +O\left(\tu^2 +\vert\theta_2\vert^3+\vert\theta_3\vert^3\right).
\end{align*} 
 However, as 
$$R(\td,\tr) = \kappa_2\kappa_1\left(-a_1\theta_2^2 -b_1\td\tr-c_1\tr^2\right)- \kappa_1\kappa_2\left(-a_2\theta_2^2 -b_2\td\tr-c_2\tr^2\right) =\pm\tr^2$$ 
and $\kappa_2\kappa_1 \neq 0,$ we have 
$$a_1 = a_2 = a,  \hspace{5mm}  b_1 = b_2 = b \hspace{5mm} \text{ and } \hspace{5mm} c_1-c_2 \neq 0.$$
Thus,
\begin{align*}
&\text{Im}\phi_j(\theta) = \kappa_j(\theta_1-h_j\tu\td-l_j\tu\tr-a\theta_2^2-b\td\tr-c_j\tr^2) +O\left( \tu^2 +\vert\theta_2\vert^3+\vert\theta_3\vert^3\right)\\
&\text{Re}\phi_j(\theta) = 1 - \gamma_j\tu^2+O\left(\vert\theta_1\vert^3+\vert\theta_2\vert^3+\vert\theta_2\vert^3\right).
\end{align*}
Let $\delta \in (0,1)$. We aim to prove that there exist $C>0$ and $O$ a neighbourhood of $0$ in $\mathbb{R}^3$ such that
 $$\lambda_3(\lbrace \theta \in O: \m{\phi_j(\theta)-1}\leq \delta, \hspace{1mm}j=1,2\rbrace) \geq C\delta^{1 + 5/6}.$$
Let $U$ be defined on $\RR^3$ by $U(\theta) = \left(\dfrac{\text{Im}\phi_1}{\kappa_1}, \td,\tr\right)$. There exist $O_1$ and $O_2$ two neighbourhoods of $0$ in $\mathbb{R}^3$ and $C>0$ such that $U$ is a diffeomorphism from $O_1$ into $O_2$ and 
\begin{equation}\label{eq:du}
\left\{
\begin{array}{rcl}
|\det(dU)|&\leq&C\textrm{ on }O_1\\
|\det(dU^{-1})|&\leq&C\textrm{ on }O_2.
\end{array}
\right.
\end{equation}
As $dU(0) = I_3$, we have
\begin{align*}
&\text{Im}\phi_1 \circ U^{-1}(u) =\kappa_1u_1\\
&\text{Im}\phi_2 \circ U^{-1} (u)=\kappa_2(u_1+\widetilde{h_2}u_1u_2+\widetilde{l_2}u_1u_3+{c_2}u_3^2) +O(\m{u_1}^3+\m{u_2}^3+\m{u_3}^3)\\
& \Ree\phi_j\circ U^{-1}(u) = 1- \gamma_ju_1^2+O(\m{u_1}^3+\m{u_2}^3+\m{u_3}^3).
\end{align*}
Then, if $\m{u_1}\leq \delta,\ \m{u_2}\leq \delta^{1/3}$ and $\m{u_3}\leq \delta^{1/2}$, we have for $j=1,2,$
$$\m{\Imm\phi_j\circ U^{-1}(u)}\lesssim \delta \hspace{5mm} \text{ and } \hspace{5mm} \m{\Ree\phi_j\circ U^{-1}(u) - 1} \lesssim \delta.$$
Thus, because of \eqref{eq:du},
$$\lambda_3(\lbrace \theta \in O_1 : \m{\phi_j(\theta)-1}<\delta,\ j\in I\rbrace) \gtrsim  \lambda_3\left( [-\delta,\delta]\times [-\delta^{1/3},\delta^{1/3}]\times [-\delta^{1/2},\delta^{1/2}]\right)  \gtrsim \delta^{1+5/6} .$$
So, applying Corollary \ref{discont}, we get that $C_\phi$ is not bounded on $A^2_\beta(\DD^3)$ for $\beta < \dfrac{-5}{6} $. 

Now, assume that $\m{P_I} = 2.$  We can write   
\begin{align*}
    \Ree\phi_j(\theta) &= 1-\gamma_j\tu^2-\veps_j\td^2-\Delta_j\tu\td+O(\m{\tu}^3+\m{\td}^3) \\ 
    \Imm\phi_j(\theta)& =  \kappa_j(\tu- g_j\tu^2-h_j\tu\td- a_j\td^2) + O(\m{\tu}^3+\m{\td}^3).
\end{align*}
Let $\delta \in (0,1). $ Consider $\td \in [-\delta^{1/2},\delta^{1/2}]$ and $\tu \in [-\delta,\delta]$. Then
$$\lambda_3(\lbrace \theta \in [-\pi,\pi)^3 : \m{\phi_j(\theta)-1}<\delta,\ j\in I\rbrace) \gtrsim  \lambda_3\left( [-\delta,\delta]\times [-\delta^{1/2},\delta^{1/2}] \times [-\pi,\pi) \right)  \gtrsim \delta^{1+1/2}. $$
Applying Corollary \ref{discont}, we get that $C_\phi : \Ab \rightarrow \Ab $ is never bounded for any $\beta\geq-1. $ 

Finally, consider the case $\m{P_I} = 1$. We can write
$$\Ree\phi_j(\theta) = 1- \gamma_j\tu^2+O(\m{\tu}^3) \text{ and }\Imm \phi_j(\theta) = \kappa_j \tu +O(\tu^2).$$ 
Let $\delta\in (0,1)$. Then $[-\delta,\delta]\times [-\pi,\pi)^2 \subset \lbrace \theta \in [-\pi,\pi)^3: \m{\phi_j(\theta)-1} <\delta,\ j=1,2\rbrace$  and  ${\lambda_3([-\delta,\delta]\times [-\pi,\pi)^2) \gtrsim \delta}.$ Applying Corollary \ref{discont}, we get that $C_\phi$ is not bounded on $\Ab$ for any $\beta \geq -1.$
\end{proof}

\begin{theorem}\label{caspart}
Let $\phi \in \mathcal{O}(\DD^3,\DD^3) \cap \mathcal{C}^3( \ovdt)$. Let $\theta_0 \in \RR^3$ and $ I \subset \lbrace 1,2,3 \rbrace$ with  $\m{I} = 2$ and $ I=\lbrace i_1,i_2\rbrace$ be such that $\phi_I(\theta_0)\in \TT^2$ and $\nabla\phi_{i_1}(\theta_0), \nabla\phi_{i_2}(\theta_0)$ are linearly dependent. If $\m{P_I}=3$, then there exist $C>0$ and $ O$ a neighbourhood of $\theta_0$ in $\RR^3$ such that for all $\eta \in \TT^2$ and all $\ovd \in (0,1)^2,$ we have 
$$\lambda_3 (\lbrace \theta \in O : \m{\phi_j(\theta)-\eta_j}<\delta_j, \ j\in I \rbrace) \leq C(\delta_{i_1}\delta_{i_2})^\alpha$$
with 
\begin{enumerate}[(a)]
\item $\alpha = \dfrac{3}{4}$ if $ s(\phi,I,e^{i\theta_0} )= 1, \hspace{1mm} r(\phi,I,e^{i\theta_0} )\in\{ (0,1),(1,0)\}$ or  $s(\phi,I,e^{i\theta_0} ) = 2, \hspace{1mm}  r(\phi,I,e^{i\theta_0} )  = (0,0)$ 
\item $0< \alpha <1$ if $ s(\phi,I,e^{i\theta_0} ) = 1, \hspace{1mm}r(\phi,I,e^{i\theta_0} )  = (1,1)$
\item $\alpha = \dfrac{1}{2}$ if $ s(\phi,I,e^{i\theta_0} )= 1, \hspace{1mm}r(\phi,I,e^{i\theta_0} ) = (0,0).$
\end{enumerate}

\end{theorem}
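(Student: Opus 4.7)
The plan is to use an implicit-function-style change of variables to reduce the three-dimensional volume estimate to the measure of a two-dimensional sub-level set of a quadratic form, and then to carry out a case-by-case analysis driven by the shape of this form, which is encoded by $s(\phi,I,e^{i\theta_0})$ and $r(\phi,I,e^{i\theta_0})$. After bounded linear changes of variables one may assume $\theta_0 = 0$, $I = \{1,2\}$, $\phi_I(0)=(1,1)$, and $L(\theta)=\theta_1$; by taking $O$ small enough, the only non-trivial case is $\eta_j = e^{i\tau_j}$ with $|\tau_j|$ small. Since $\partial \Imm\phi_1/\partial\theta_1(0)=\kappa_1 \neq 0$, the map $U(\theta)=(\Imm\phi_1(\theta)/\kappa_1,\theta_2,\theta_3)$ is a diffeomorphism of a neighbourhood of $0$ with bounded Jacobian, and in the new coordinates $u$ the Taylor expansion gives $\Imm\phi_1/\kappa_1 = u_1$ and
\[
\Imm\phi_2/\kappa_2 = u_1 + \tilde R(u_2,u_3) + u_1\cdot \ell(u) + O(|u|^3),
\]
together with $\Ree\phi_j = 1 - Q_j(u)+O(|u|^3)$, where $\tilde R$ coincides, up to a non-zero scalar, with the restriction of $P_2-P_1$ to indices $k,l>m$ (the same object used in the definition of $R$).

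The condition $|\phi_j-\eta_j|<\delta_j$ together with $|\phi_j|\le 1$ forces $Q_j(u)\lesssim \delta_j+\tau_j^2$ and hence $|u_1|\lesssim \sqrt{\delta_j+\tau_j^2}$ (and $|u_2|\lesssim\sqrt{\delta_j+\tau_j^2}$ when $s=2$); this is enough to absorb all $O(|u|^3)$ and $u_1\ell(u)$ contributions in the error, possibly after shrinking $O$. Subtracting the two imaginary-part constraints yields
\[
|\tilde R(u_2,u_3) - A|\lesssim \delta_1 + \delta_2,\qquad A = \tau_2/\kappa_2 - \tau_1/\kappa_1,
\]
while together they confine $u_1$ to an interval of length at most $\min(\delta_1,\delta_2)$. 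Thus the total volume is bounded by $\min(\delta_1,\delta_2)$ times the planar area of $\{(u_2,u_3):\,|\tilde R(u_2,u_3)-A|\lesssim \delta_1+\delta_2\}$ intersected with a bounded open set.

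The case analysis then goes as follows. In case (c), with $s=1,\,r=(0,0)$, the identities derived in the proof of Theorem~\ref{noncont} yield $\tilde R\equiv 0$, so the planar set has bounded area and the volume is $\leq C\min(\delta_1,\delta_2)\le C(\delta_1\delta_2)^{1/2}$. In case (a) with $s=1$ and $r\in\{(1,0),(0,1)\}$, a further linear change of variables puts $\tilde R$ in the form $\pm u_3^2$ (up to a linear perturbation in $(u_2,u_3)$ absorbed by completing the square), so the constraint confines $u_3$ to a set of measure $\lesssim \sqrt{\delta_1+\delta_2}$; combining with the $\min(\delta_1,\delta_2)$ bound on $u_1$ and the elementary inequality $\min(a,b)\sqrt{\max(a,b)}\le (ab)^{3/4}$ yields the $(\delta_1\delta_2)^{3/4}$ bound. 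In case (a) with $s=2,\,r=(0,0)$, the vanishing of the $u_3^2$-coefficient forces $\tilde R=u_2(\alpha u_2+\beta u_3)$; a splitting of the $u_2$-range into $|u_2|\lesssim\sqrt{\delta_1+\delta_2}$ and its complement, combined with Fubini, gives again an area bound $\lesssim \sqrt{\delta_1+\delta_2}$ and hence the same $(\delta_1\delta_2)^{3/4}$ estimate. Finally in case (b), $s=1,\,r=(1,1)$, one reduces $\tilde R$ to $u_2^2 - u_3^2$ and applies Lemma~\ref{techniq} to obtain an area bound $\lesssim -(\delta_1+\delta_2)\log(\delta_1+\delta_2)$, which for any fixed $\alpha<1$ is $\leq C_\alpha(\delta_1+\delta_2)^\alpha$; the total volume is then $\lesssim \min(\delta_1,\delta_2)\cdot 2^\alpha \max(\delta_1,\delta_2)^\alpha\leq 2^\alpha (\delta_1\delta_2)^\alpha$.

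The main obstacle lies in case (a) with $s=2$: the factorization $\tilde R = u_2(\alpha u_2+\beta u_3)$ admits arbitrarily degenerate level sets near the axis $u_2=0$, so one cannot directly invoke Lemma~\ref{techniq} and has to argue by splitting the $u_2$-range and using Fubini. A secondary technical point is to verify, uniformly in $(\tau_1,\tau_2)$ small, that the cross term $u_1\cdot\ell(u)$ and the $O(|u|^3)$ corrections are genuinely absorbed by the $O(\delta_1+\delta_2)$ slack, for which the real-part bounds on $|u_1|$ and $|u_2|$ are essential.
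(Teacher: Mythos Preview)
Your overall strategy---rectify $\Imm\phi_1$ via the diffeomorphism $U(\theta)=(\Imm\phi_1/\kappa_1,\theta_2,\theta_3)$ and then analyse a sublevel set of the difference of imaginary parts---coincides with the paper's approach in cases (b), (c) and the $s=1$ half of~(a). The gap is that you never invoke the parametrised Morse lemma (Lemma~\ref{Morse}): you only perform \emph{linear} changes and ``complete the square'' on the quadratic part $\tilde R$, and then claim the cubic remainder $O(|u|^3)$ is ``absorbed in the error'' using real-part bounds on $|u_1|$. That absorption fails: the terms $O(|u_2|^3+|u_3|^3)$ are bounded only by the diameter of $O$, independently of $\ovd$, so after your reduction the constraint is $|\pm u_3^2-A'|\lesssim (\delta_1+\delta_2)+|O|^3$ (or $|u_2^2-u_3^2-A'|\lesssim(\delta_1+\delta_2)+|O|^3$), which gives nothing for small $\ovd$. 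The paper applies Lemma~\ref{Morse} to $f=\kappa_2\Imm\phi_1-\kappa_1\Imm\phi_2$ \emph{after} the change $U$ to obtain an \emph{exact} normal form $\pm v_3^2+h(v_1,v_2)$ in case~(a), $s=1$, and $v_2^2-v_3^2+h(v_1)$ in case~(b); Fubini, respectively Lemma~\ref{techniq}, then applies with no cubic residue. In case~(b) this is essential: Lemma~\ref{techniq} is stated and proved only for the exact form $x^2-y^2$.

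For the $s=2$, $r=(0,0)$ half of~(a) your route is both different from the paper's and incorrect as written. You identify the $(u_2,u_3)$-part $\tilde R$ of $\Imm\phi_2/\kappa_2-u_1$ with ``the restriction of $P_2-P_1$ to indices $k,l>m$'' and conclude that the $u_3^2$-coefficient vanishes. But with $m=s=2$ you have only normalised $L=\theta_1$; the second form $L_2$ need not vanish on $\{\theta_1=0\}$, so the $L_2^2$ and $L_2L_3$ terms of $P_2-P_1$ also contribute to $\tilde R$, and its $u_3^2$-coefficient is in general nonzero (all that $r=(0,0)$ guarantees is that the kernel direction of $Q_1+Q_2$ is isotropic for $\tilde R$). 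The paper avoids this entirely by switching to the \emph{sum of the real parts} $\Ree\phi_1+\Ree\phi_2=2-(Q_1+Q_2)+O(|\theta|^3)$, whose quadratic part has signature $(2,0)$; after the change $U$ one applies Morse in $u_2$ to obtain $2-v_2^2+h(v_1,v_3)$ and reads off $\lambda_3\lesssim\delta_1\delta_2^{1/2}\le(\delta_1\delta_2)^{3/4}$ by Fubini.
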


\begin{proof}
Without loss of generality, we can assume that $I = \lbrace 1,2\rbrace, \ \theta_0 =(0,0,0)$ and ${\phi_I(\theta_0) =(1,1).}$ First, let us consider the case $s(\phi,I,e)  = 1$ and $r(\phi,I,e) = (0,1)$ or $(1,0).$
We may argue as in the proof of Theorem \ref{noncont} to write, for $j=1,2,$
\begin{align*} 
&\text{Im}\phi_j(\theta) = \kappa_j(\tu-g_j\tu^2-h_j\tu\td-l_j\tu\tr-a\td^2-b\td\tr-c_j\tr^2)+O(\vert \tu\vert^3+\vert \td\vert^3+\vert \tr\vert^3)\\
&\text{Re}\phi_j(\theta) = 1-\gamma_j\tu^2 +O(\vert \tu\vert^3+\vert \td\vert^3+\vert \tr\vert^3),
\end{align*}
with $c_1\neq c_2.$
Let 
\begin{align*}
 f(\theta) &= \kappa_2\text{Im}\phi_1(\theta)-\kappa_1\text{Im}\phi_2(\theta) \\
 & = \alpha\tu^2+\beta\tu\td+\gamma\tu\tr +\kappa_1\kappa_2(c_2-c_1) \tr^2 +O(\vert \tu\vert^3+\vert \td\vert^3+\vert \tr\vert^3).
\end{align*}
If $\delta_1 \leq \delta_2$, let $U$ be defined on $\RR^3$ by $U(\theta) = \left(\dfrac{\text{Im}\phi_1}{\kappa_1}, \td,\tr\right)$, let $O_1$ and $O_2$ be two neighbourhoods of $0$ in $\mathbb{R}^3$ and let $C>0$ be such that $U$ is a diffeomorphism from $O_1$ into $O_2$ and \eqref{eq:du} is satisfied. As $dU(0) = I_3$,  we have 
\begin{align*}
\text{Im} \, \phi_1 \circ U^{-1}(u) &= \kappa_1 u_1 \\
f \circ U^{-1}(u) &= \widetilde{\alpha} u_1^2 + \widetilde{\beta} u_1 u_2 + \widetilde{\gamma} u_1 u_3 +\kappa_1\kappa_2(c_2-c_1) u_3^2 + O(\vert u_1 \vert^3 + \vert u_2 \vert^3 + \vert u_3 \vert^3).
\end{align*}
We observe that $f(0) = 0$, $d_{u_3}f(0) = 0$ and $d^2_{u_3}f(0) = 2\kappa_1\kappa_2(c_2-c_1) \neq 0$ as $c_1 \neq c_2.$ Hence, by the parametrized Morse's lemma \ref{Morse}, there exist a neighborhood $O_2' \subset O_2$ of $0$ in $\mathbb{R}^3$, a diffeomorphism ${V : O_2' \to V(O_2')} $ defined by  $ V(u_1, u_2, u_3) = (u_1, u_2, v_3(u_1, u_2, u_3))
$, a $\mathcal{C}^1-$map $h : \mathbb{R}^2 \to \mathbb{R}$ and $C_1>0$ such that
\[
f \circ (V \circ U)^{-1}(v) = \pm v_3^2 + h(v_1, v_2) \]
and
\begin{equation}\label{eq:dv}
\left\{
\begin{array}{rcl}
|\det(dV)|&\leq&C_1\textrm{ on }O'_2\\
|\det(dV^{-1})|&\leq&C_1\textrm{ on }V(O'_2).
\end{array}
\right.
\end{equation}

By the definition of $V$, we still have $\text{Im}\phi_1\circ (V\circ U)^{-1}(v) = \kappa_1 v_1.$
Let $ O = U^{-1}(O'_2)$. For $\theta \in O \cap \phi^{-1}_I(\Si)$, considering $v = V\circ U(\theta)$, we have 
$$\vert \kappa_1 v_1 -\Imm\eta_1 \vert \leq \delta_1 \hspace{1mm} \text{ and} \hspace{1mm } \vert \pm v_3^2 + h(v_1,v_2) - (\kappa_2\Imm\eta_1-\kappa_1\Imm\eta_2) \vert \leq \max(\delta_1,\delta_2) = \delta_2.$$ 
So, by \eqref{eq:du}, \eqref{eq:dv} and Fubini's theorem,
$$ \lambda_3(\phi^{-1}_I(\Si) \cap O ) \lesssim \lambda_3( V\circ U(O\cap \phi^{-1}_I(\Si))) \lesssim \delta_1\delta_2^{1/2}\lesssim (\delta_1\delta_2)^{3/4}.$$
If $\delta_2\leq \delta_1$, we work with ${U(\theta) = \left(\dfrac{\text{Im}\phi_2}{\kappa_2}, \td,\tr\right)}$ and in the same way, we prove that $$\lambda_3( V\circ U(O\cap \phi^{-1}_I(\Si))) \lesssim \delta_2\delta_1^{1/2}\lesssim (\delta_1\delta_2)^{3/4}.$$ 

\smallskip

Now, let us consider the case $s(\phi,I,e)  = 2$ and $r(\phi,I,e) = (0,0).$ We write 
$$\text{Re}\phi_j(\theta) = 1 -Q_j(\theta) + O(\m{\tu}^3+\m{\td}^3+\m{\tr}^3) \text{ and  } \text{Im}\phi_j(\theta) = \kappa_jL(\theta) + O(\m{\tu}^2+\m{\td}^2+\m{\tr}^2)$$
where $Q_j$ is a quadratic form and $L$ is a nonzero linear form. 

Let us consider $$f(\theta) = \text{Re}\phi_1(\theta)+\text{Re}\phi_2(\theta) = 2- Q(\theta) + O(\m{\tu}^3+\m{\td}^3+\m{\tr}^3), $$
with $Q$ a quadratic form of signature $(2,0)$. Doing a linear change of variables, we may assume 
\begin{align*}
\Imm\phi_1(\theta) = a_1\theta_1+a_2\theta_2+O(\m{\theta_1}^2+\m{\theta_2}^2+\m{\theta_3}^2)\\
f(\theta) = 2 -\theta_1^2-\theta_2^2 +O(\m{\theta_1}^3+\m{\theta_2}^3+\m{\theta_3}^3).
\end{align*}
If $\delta_1 \leq \delta_2$, let $U(\tu,\td,\tr) = (\Imm\phi_1(\theta),\td,\tr)$ and let $O_1,O_2$ and $C>0$ be as above. Doing this change of variables, we get 
$$\Imm\phi_1 \circ U^{-1}(u) = u_1 \text{ and } f\circ  U^{-1}(u) = 2- \widetilde{Q}(u_1,u_2)+O(\m{u_1}^3+\m{u_2}^3+\m{u_3}^3)$$ with $sign(\widetilde{Q}) = (2,0)$. In particular, $ \dfrac{\partial^2\widetilde{Q}(0,0)}{\partial u_2^2}>0$. Thus, applying Morse's lemma to $ f$ regarding $u_2$, we get that there exist $O_2', O_3$ two neighbourhoods of $0$ in $\RR^3$, a diffeomorphism $V : O'_2 \rightarrow O_3$ defined by ${V(u_1,u_2,u_3) = (u_1, v_2(u_1,u_2,u_3),u_3)}$, a $\mathcal{C}^1-$map $h : \mathbb{R}^2 \rightarrow \mathbb{R}$ and $C_1>0$ such that \eqref{eq:dv} is satisfied and
$$ \Imm\phi_1\circ ( V\circ U)^{-1}(v)= v_1 \text{ and } f \circ  (V\circ U)^{-1}(v) =2 -v_2^2+h(v_1,v_3).  $$
Thus, if $O = U^{-1}(O'_2)$ and $\theta \in \phi_I^{-1}(\Si)\cap O$, considering $v = V\circ U (\theta)$ we have 
$$\m{v_1 - \Imm\eta_1}\leq \delta_1 \text{ and } \m{v_2^2-(\Ree\eta_1+\Ree\eta_2)}\leq \max(\delta_1,\delta_2) = \delta_2.$$ Therefore, we have $\lambda_3( \phi_I^{-1}(\Si)\cap O) \lesssim \delta_1\delta_2^{1/2} \lesssim (\delta_1\delta_2)^{3/4}$.\\
If $\delta_2 \leq \delta_1$, we replace $U$ by $U'$ defined by ${U'(\theta_1,\theta_2,\theta_3) = (\Imm\phi_2(\theta),\theta_2,\theta_3)}$ and similarly, we get
 $$\lambda_3(\phi_I^{-1}(\Si)\cap O)  \lesssim (\delta_1\delta_2)^{3/4}.$$

\smallskip

Now, let us consider the case $s(\phi,I,e) =1$ and $r(\phi,I,e) = (1,1)$. We write for $j=1,2,$
\begin{align*}
\text{Im}\phi_j(\theta)& = \kappa_j(\tu-g_j\tu^2-h_j\tu\td-l_j\tu\tr-a_j\td^2-b_j\td\tr-c_j\tr^2)+ O\left(\m{\tu}^3+\m{\td}^3+\m{\tr}^3\right)\\
\text{Re}\phi_j(\theta) & =1 -\gamma_j\tu^2 + O\left(\m{\tu}^3+\m{\td}^3+\m{\tr}^3\right).
\end{align*}
Since 
$$R(\theta)=\kappa_1\kappa_2\left((a_2-a_1)\theta_2^2+(b_2-b_1)\theta_2\theta_3+(c_2-c_1)\theta_3^2\right)$$
has signature $(1,1)$, we may assume, doing a linear change of variables, that it is given by $R(\theta)=\kappa_1\kappa_2(\theta_2^2-\theta_3^2)$. Hence, if set $f=\kappa_2\text{Im}(\phi_1)-\kappa_1\text{Im}(\phi_2)$, we get that we may write 
$$
f (\theta)  =  \alpha \theta_1^2+{\beta}\theta_1\theta_2+{\gamma}\theta_1\theta_3+\kappa_1\kappa_2(\theta_2^2-\theta_3^2)+O(\vert \theta_1\vert^3+\vert \theta_2\vert^3+\vert \theta_3\vert^3).
$$
If $\delta_1 \leq \delta_2,$ we do the change of variables $U(\theta) = \left( \dfrac{\text{Im}\phi_1}{\kappa_1}, \theta_2, \theta_3\right)$ where $U : O_1 \rightarrow O_2$ is a diffeomorphism, $O_1$ and $O_2$ are two neighbourhoods of $0$ in $\RR^3$ and $C>0$ is such that \eqref{eq:du} is satisfied. Then, we have 
\begin{align*}
\text{Im}\phi_1  \circ U^{-1}(u) &= \kappa_1 u_1   \\
 f  \circ  U^{-1}(u) &= \widetilde{\alpha} u_1^2+\widetilde{\beta} u_1u_2+\widetilde{\gamma} u_1u_3+\kappa_1\kappa_2(u_2^2-u_3^2)+O(\vert u_1\vert^3+\vert u_2\vert^3+\vert u_3\vert^3).
\end{align*}
We check that $f(0)=0$, $d_{u_2,u_3}f(0) = (0,0)$ and $d^2_{u_2,u_3}f(0) = \left( \begin{array}{cc}
2\kappa_1\kappa_2 & 0 \\ 
0 & -2\kappa_1\kappa_2
\end{array} \right)$ with \\
${\det(d_{u_2,u_3}f(0)) = -4(\kappa_1\kappa_2)^2 \neq 0}$. Therefore, by Morse's lemma, there exist $O_2', O_3$ two neighbourhoods of $0$ in $\RR^3$,  a diffeomorphism $V : O'_2 \rightarrow O_3$ defined by 
$$V(u_1,u_2,u_3) = (u_1, v_2(u_1,u_2,u_3),v_3(u_1,u_2,u_3)), $$
a  $\mathcal{C}^1-$map $h : \mathbb{R} \rightarrow \mathbb{R}$ and $C_1>0$ such that $f \circ (V\circ U)^{-1}(v) = v_2^2-v_3^2 +h(v_1)$ and \eqref{eq:dv} is satisfied.
By the definition of $V$, we still have $\text{Im}\phi_1 \circ (V \circ U)^{-1}(v) = \kappa_1v_1.$ We set $O =  U^{-1}(O'_2)$. Then, for $\theta \in O\cap \phi_I^{-1}(\Si)$, considering $v= V\circ U(\theta)$, we get
$$\m{\kappa_1v_1 - \Imm\eta_1  }\leq \delta_1 \hspace*{5mm} \text{ and} \hspace{5mm} \m{v_2^2-v_3^2+h(v_1)-(\kappa_2\Imm\eta_1-\kappa_1\Imm\eta_2)}\leq \max(\delta_1,\delta_2)= \delta_2 .$$ Then, by Lemma \ref{techniq}, we deduce
\begin{align*}
    \lambda_3(\lbrace \theta \in O :\  \m{\kappa_1v_1 - \Imm\eta_1 }\leq \delta_1,\ \m{v_2^2-v_3^2+h(v_1)-(\kappa_2\Imm\eta_1-\kappa_1\Imm\eta_2)}\leq \delta_2\rbrace)
    &\leq -C\delta_1\delta_2\log(\delta_2)\\
    & \lesssim (\delta_1\delta_2)^{\alpha},
\end{align*} 
for all $0<\alpha <1. $
If $\delta_2\leq \delta_1$, we work with $U'(\theta)  =  \left( \dfrac{\text{Im}\phi_2}{\kappa_2}, \td, \tr\right)$ and we get a similar estimate.

\smallskip

Finally, let us consider the case $s(\phi,I,e) =1 $ and $r(\phi, I, e)= (0,0).$ Arguing as in the proof of Theorem \ref{noncont}, we write for $j=1,2,$
\begin{align*}
\text{Im}\phi_j(\theta)& = \kappa_j(\tu-g_j\tu^2-h_j\tu\td-l_j\tu\tr-a\td^2-b\td\tr-c\tr^2)+ O\left(\m{\tu}^3+\m{\td}^3+\m{\tr}^3\right)\\
\text{Re}\phi_j(\theta) & =1 -\gamma_j\tu^2 + O\left(\m{\tu}^3+\m{\td}^3+\m{\tr}^3\right).
\end{align*}
If $\delta_1\leq \delta_2,$ we do the change of variables $U(\theta) = \left( \dfrac{\Imm\phi_1}{\kappa_1},\td,\tr\right) $  where $U : O_1 \rightarrow O_2$ is a diffeomorphism with $O_1, O_2$ two neighbourhoods of $0$ in $\RR^3$ and $C>0$ is such that \eqref{eq:du} is satisfied. We then have 
$$\Imm\phi_1 \circ U^{-1}(u) = \kappa_1u_1.$$
For $\theta \in O_1 \cap \phi_I^{-1}(\Si),$ considering $u = U(\theta)$, we have $\m{\kappa_1u_1-\Imm\eta_1 } \leq\delta_1$. Thus, 
$$\lambda_3( U(O_1\cap \phi^{-1}_I(\Si))) \lesssim \delta_1 \leq(\delta_1\delta_2)^{1/2}. $$
If $\delta_1 \geq \delta_2, $ we work with $U'(\theta) = \left( \dfrac{\Imm\phi_2}{\kappa_2},\td,\tr\right)$ and we also get
$$ \lambda_3(\lbrace \theta \in O_1 : \m{\kappa_2u_2-\Imm\eta_2}\leq \delta_2\rbrace) \lesssim (\delta_1\delta_2)^{1/2}. $$ 
\end{proof}

\begin{remark}
The last case also follows from Lemma \ref{lem:meanvalue}.
\end{remark}

Before stating the main result of this section, we need one more technical lemma that links the fact that the derivatives $\nabla\phi_{i_k}(\xi)$ are independent for $i_k \in I $ such that $\phi_I(\xi) \in \TTI$  and the measure of the boundary values of $\phi$ for $\phi$ regular enough. 

\begin{lemma}\label{majolambda}
Let $\phi \in \mathcal{O}(\DD^d,\DD^d)\cap \mathcal C^1(\ovD).$  Let $\beta\geq -1$. Let $\xi=e^{i\theta_0} \in \TT^d,\  I=\lbrace i_1,\dots,i_p \rbrace \subset \I{d},\ I \neq \varnothing$ such that $\phi_I(\xi) \in \TTI$ and $\nabla\phi_{i_1}(\xi),\dots,\nabla\phi_{i_p}(\xi) $ are linearly independent. Then 
there exists a neighbourhood $O$ of $\theta_0$ in $\RR^d$ such that, for all $\eta \in \TTI$ and all $\ovd \in (0,2)^{\m{I}} ,$ we have
$$\lambda_d(\lbrace \theta \in O : \m{\phi_j(\theta) - \eta_j } <\delta_j,\ j \in I\rbrace) \lesssim \dfrac{\prod\limits_{j\in I}\delta_j^{2+\beta}}{\prod\limits_{k = 1}^d\wik^{1+\beta}}. $$
\end{lemma}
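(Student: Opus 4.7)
The plan is to linearize the map $\theta\mapsto(\phi_j(e^{i\theta}))_{j\in I}$ near $\theta_0$ using the linear independence hypothesis, and then to compare the resulting volume estimate with the weights $\omega_{I,k}(\ovd)$ via Lemma \ref{lem:produitwk}. First, composing on the domain with the rotation $z\mapsto\overline{\xi}z$ and on the target with $w_j\mapsto\overline{\phi_j(\xi)}w_j$ for each $j\in I$ (operations that preserve Lebesgue measure and all hypotheses), one reduces to $\xi=e$ and $\phi_j(e)=1$ for every $j\in I$. Lemma \ref{JC} then yields that the numbers $a_{j,k}:=\partial\phi_j/\partial z_k(e)$ are nonnegative, and $a_{j,k}>0$ iff $k\in P_j$. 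Since the vectors $\nabla\phi_{i_l}(e)\in\RR_{\geq 0}^d$, $l=1,\dots,p$, are linearly independent and supported in $P_{i_l}\subset P_I$, their span has dimension $p$ and lies in the coordinate subspace indexed by $P_I$; hence $|P_I|\geq p=|I|$, and one can choose indices $k_1,\dots,k_p\in P_I$ such that $M:=(a_{i_l,k_m})_{1\leq l,m\leq p}$ is invertible.

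Next, I would straighten. Denoting by $\{k_{p+1},\dots,k_d\}$ the complementary indices, set
$$\Psi(\theta)=\bigl(\Imm\phi_{i_1}(e^{i\theta}),\dots,\Imm\phi_{i_p}(e^{i\theta}),\theta_{k_{p+1}},\dots,\theta_{k_d}\bigr).$$
Since $\partial_{\theta_k}\Imm\phi_j(e^{i\theta})|_{\theta=0}=a_{j,k}$, the Jacobian $d\Psi(0)$ is block-triangular with invertible diagonal block $M$ (and identity in the lower right corner), hence invertible. The inverse function theorem then supplies a neighbourhood $O$ of $0$ on which $\Psi$ is a $\mathcal C^1$-diffeomorphism with uniformly bounded inverse Jacobian. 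For $\theta\in O$ satisfying $|\phi_j(e^{i\theta})-\eta_j|<\delta_j$ for all $j\in I$, the first $p$ coordinates of $\Psi(\theta)$ each lie in an interval of length $2\delta_{i_l}$, while the remaining $d-p$ coordinates range over a bounded subset of $\RR^{d-p}$. A change of variables then gives, for $\ovd\in(0,1)^{|I|}$,
$$\lambda_d\bigl(\{\theta\in O:\ |\phi_j(e^{i\theta})-\eta_j|<\delta_j,\ j\in I\}\bigr)\lesssim\prod_{j\in I}\delta_j.$$

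Finally, the desired inequality reduces to $\prod_{k=1}^d\omega_{I,k}(\ovd)^{1+\beta}\leq\prod_{j\in I}\delta_j^{1+\beta}$, and since $1+\beta\geq 0$ it suffices to prove $\prod_{k=1}^d\omega_{I,k}(\ovd)\leq\prod_{j\in I}\delta_j$. Lemma \ref{lem:produitwk} yields $\prod_{k=1}^d\omega_{I,k}(\ovd)\leq\prod_{j\in I}\delta_j^{|P_I|/|I|}$, and because $|P_I|\geq|I|$ and each $\delta_j<1$, the exponent $|P_I|/|I|\geq 1$ delivers the claim. The remaining case where some $\delta_j\in[1,2)$ is a separate technicality: one can replace such $\delta_j$ by any fixed small value and adjust constants, since the measure of the set is in any case bounded by $\lambda_d(O)$. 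The main obstacle is the combinatorial bookkeeping around the set $P_I$ and the verification that the rank-$p$ hypothesis combined with the nonnegativity from Julia-Caratheodory forces the chosen columns $k_1,\dots,k_p$ to lie in $P_I$ and $|P_I|\geq|I|$; once these are in place the rest is a clean application of the inverse function theorem together with Lemma \ref{lem:produitwk}.
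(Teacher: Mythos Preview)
Your proof is correct and takes a genuinely different, more self-contained route than the paper's. The paper invokes an external volume estimate (Lemma~12 of \cite{Ko22} / Theorem~5.2 of \cite{Baytridisc}) to obtain $V_\beta(\phi_I^{-1}(S_I(\eta,\ovd))\cap\mathcal U)\lesssim\prod_{j\in I}\delta_j^{2+\beta}$, then combines Lemma~\ref{lem:doubleinclusion} and Lemma~\ref{equiv} to pass from the weighted volume $V_\beta$ back down to the torus measure $\lambda_d$. You instead work directly on the torus: the inverse function theorem applied to $(\Imm\phi_{i_1},\dots,\Imm\phi_{i_p},\theta_{k_{p+1}},\dots,\theta_{k_d})$ yields $\lambda_d\lesssim\prod_{j\in I}\delta_j$ outright, and the comparison with the weights then reduces to $\prod_k\omega_{I,k}(\ovd)\leq\prod_{j\in I}\delta_j$, which you obtain from Lemma~\ref{lem:produitwk} together with the key observation that linear independence of $p$ vectors supported in $P_I$ forces $|P_I|\geq|I|$. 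In effect you are verifying the hypothesis of Corollary~\ref{cont}(a) with $\alpha=1$, the condition $\alpha+\frac{|P_I|}{|I|}(1+\beta)\geq 2+\beta$ collapsing precisely to $|P_I|\geq|I|$. Your approach avoids the external citations and the detour through $V_\beta$; the paper's approach trades that for reuse of its own machinery (Lemmas~\ref{lem:doubleinclusion} and~\ref{equiv}) without rederiving a local torus estimate.
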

\begin{proof}
Let $\xi \in \TT^d$ be as in the assumptions. Then, applying Lemma $12$ of \cite{Ko22} (or Theorem $5.2$ of \cite{Baytridisc}), we get  that there exist $C>0$ and $\mathcal U $ a neighbourhood of $\xi $ in $\ovD$ such that for all $\eta \in \TTI$ and all $\ovd \in (0,2)^{\m{I}}$, 
$$V_\beta( \lbrace z \in \DD^d \cap \mathcal U  : \m{\phi_j(\theta) - \eta_j } <\delta_j,\ j \in I\rbrace) \leq C\prod\limits_{j\in I}\delta_j^{2+\beta}.$$
Then by Lemma \ref{lem:doubleinclusion}, there exists $D\geq 1$ such that  
$$E := \lbrace ((1-\rk)e^{i\tk}) : 0\leq \rk \leq \wik,\ k \in P_I \text { and }  e^{i\theta} \in \phi_I^{-1}(\Si )\rbrace \subset \phi_I^{-1}(S(\eta,D\ovd)) . $$
Let also $O$ be a neighbourhood of $\theta_0$ and $r>0$ such that $((1-\rho_k)e^{i\theta_k})\in\mathcal U$ provided $0\leq \rho_k<r$ and $\theta\in O$. Then by Lemma $\ref{equiv}$, we have 
\begin{align*}
\prod\limits_{k = 1}^d\wik^{1+\beta}\lambda_d(\lbrace \theta \in O : \m{\phi_j(\theta) - \eta_j } <\delta_j,\ j \in I\rbrace) &\lesssim V_\beta( \phi^{-1}_I(S_I(\eta,D\ovd)) \cap \mathcal U) \\
&\lesssim \prod_{j \in I} \delta_j^{2+\beta}.
\end{align*}
Thus,  $$\lambda_d(\lbrace \theta \in O : \m{\phi_j(\theta) - \eta_j } <\delta_j,\ j \in I\rbrace) \lesssim \dfrac{ \prod\limits_{j \in I} \delta_j^{2+\beta}}{\prod\limits_{k = 1}^d\wik^{1+\beta}}.$$
\end{proof}

Now, let us present the result of boundedness on $\Ab$. 

\begin{theorem}\label{general}
Let $\phi \in \mathcal{O}(\DD^3,\DD^3)\cap \mathcal{C}^3(\overline{\DD}^3)$ with $\phi(\overline{\DD}^3)\cap\partial \DD^3\neq\varnothing.$ Let $\beta \geq -1$. For $\xi \in \TT^3$, we define $J_{c}(\phi,\xi)$ and $J_{d}(\phi,\xi)$ in the following way:
\begin{enumerate}[(1)]
    \item $\phi(\xi) \in \TT^3$
    \begin{enumerate}
        \item If $d\phi(\xi)$ is invertible, then $J_{c}(\phi,\xi) = [-1,+\infty)$ and $J_{d}(\phi,\xi) = \varnothing,$
        \item If $d\phi(\xi)$ is not invertible, then $J_{c}(\phi,\xi) = \varnothing $ and $J_{d}(\phi,\xi) = [-1,+\infty).$
    \end{enumerate}
    \item $\phi(\xi) \in \TT^2\times \DD \cup \TT \times \DD \times \TT \cup \DD \times \TT^2. $ Let $1\leq i_1<i_2\leq 3$ be such that $(\phi_{i_1}(\xi),\phi_{i_2}(\xi))\in \TT^2$ and $I = \lbrace i_1,i_2 \rbrace $. 
    \begin{enumerate}
            \item If $(\nabla\phi_{i_1}(\xi),\nabla\phi_{i_2}(\xi))$ are linearly independent, then $J_{c}(\phi,\xi) = [-1,+\infty) $ and ${J_{d}(\phi,\xi) = \varnothing,}$
            \item If $(\nabla\phi_{i_1}(\xi),\nabla\phi_{i_2}(\xi))$ are linearly dependent and $\m{P_I} =3 $ then:
            \begin{enumerate}
                \item[$(\alpha)$]  if $s(\phi,I,\xi) = 1$ and $r(\phi,I,\xi) = (0,0)$, then $J_{c}(\phi,\xi) =[0,+\infty)$ and $J_{d}(\phi,\xi) = \left[-1,\frac{-2}{3}\right),$
                \item[$(\beta)$] if $s(\phi,I,\xi) = 1$ and $r(\phi,I,\xi) = (1,0)$ or $(0,1)$, then $J_{c}(\phi,\xi) = \left[\frac{-1}{2},+\infty\right)$ and $J_{d}(\phi,\xi) = \left[-1,\frac{-5}{6}\right)$,
                \item[$(\gamma)$] if $s(\phi,I,\xi) = 1$ and $r(\phi,I,\xi) = (1,1)$, then $J_{c}(\phi,\xi) =(-1,+\infty)$ and ${J_{d}(\phi,\xi) =\lbrace -1 \rbrace,}$
                \item[$(\delta)$] if $s(\phi,I,\xi) = 2$ and $r(\phi,I,\xi) = (0,0)$, then $J_{c}(\phi,\xi) = \left[\frac{-1}{2},+\infty \right)$ and ${J_{d}(\phi,\xi) =\lbrace -1 \rbrace,}$
                \item[$(\epsilon)$] if $s(\phi,I,\xi) = 3$ or $s(\phi,I,\xi) = 2$ and $r(\phi,I,\xi) \in \lbrace (0,1), (1,0)\rbrace$ or $s(\phi,I,\xi) = 1$ and $r(\phi,I,\xi) \in \lbrace (0,2), (2,0) \rbrace$, then $J_{c}(\phi,\xi) =[-1,+\infty)$   and $J_{d}(\phi,\xi) = \varnothing.$  
            \end{enumerate}
            \item  If $(\nabla\phi_{i_1}(\xi),\nabla\phi_{i_2}(\xi))$ are linearly dependent and $\m{P_I } \leq 2, $ then $J_{c}(\phi,\xi) = \varnothing$ and $J_{d}(\phi,\xi)=[-1,+\infty). $      
        \end{enumerate}
    \item $\phi(\xi) \in \TT \times\DD^2 \cup \DD\times\TT\times\DD\cup\DD^2\times\TT$, then $J_{c}(\phi,\xi) =[-1,+\infty)$ and $J_{d}(\phi,\xi) = \varnothing$.    
    \end{enumerate}
We define $J_{cont}(\phi)  =\bigcap\limits_{\xi \in \TT^3,\ \phi(\xi)\in\partial \DD^3} J_{c}(\phi,\xi) $ and  $J_{discont}(\phi) = \bigcup\limits_{\xi \in \TT^3,\ \phi(\xi)\in\partial \DD^3} J_{d}(\phi,\xi) $. Then, for $\beta \in J_{cont}(\phi)$,  $C_\phi$ is bounded on $\Ab$ and for $\beta \in J_{discount}(\phi)$, $C_\phi$ is not bounded on $\Ab$. 
\end{theorem}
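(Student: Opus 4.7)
The overall strategy is a point-by-point verification of condition (3) of Theorem \ref{result_gen} for the continuity direction and of Corollary \ref{discont} (possibly via Theorem \ref{noncont}) for the non-continuity direction. Given $\theta_0\in\RR^3$, set $\xi=e^{i\theta_0}$ and let $I\subset\{1,2,3\}$ be maximal with $\phi_I(\xi)\in\TT^{|I|}$; this maximal $I$ determines which case of the statement applies at $\theta_0$, and since $\TT^3$ is compact one only has to work locally at each such $\theta_0$.

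For the continuity direction ($\beta\in J_{cont}(\phi)$), I would argue as follows. When $|I|=1$ (case 3), Lemma \ref{lem:meanvalue} produces the local estimate $\lambda_3(\cdot)\lesssim\delta$, and Corollary \ref{cont} then gives the required bound for every $\beta\geq -1$. When $|I|=3$ with $d\phi(\xi)$ invertible (case 1(a)), or when $|I|=2$ with independent gradients (case 2(a)), Lemma \ref{majolambda} directly provides condition (3) of Theorem \ref{result_gen}. The delicate subcases 2(b)(ii)$(\alpha)$--$(\delta)$ are exactly the content of Theorem \ref{caspart}: it yields a local bound by $(\delta_{i_1}\delta_{i_2})^{\alpha}$ with $\alpha$ equal to $1/2$, $3/4$, $3/4$, or any $0<\alpha<1$ respectively. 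Plugging this into Corollary \ref{cont} with $|I|=2$ and $|P_I|=3$ gives continuity on $A_\beta^2(\DD^3)$ whenever $\alpha+\tfrac{3}{2}(1+\beta)\geq 2+\beta$, that is $\beta\geq 1-2\alpha$, yielding the ranges $[0,\infty)$, $[-1/2,\infty)$, $[-1/2,\infty)$, $(-1,\infty)$ announced for $J_c$. Finally, subcase 2(b)(ii)$(\epsilon)$ needs a variant of the same Morse-lemma computation in which the stronger non-degeneracy of $\sum Q_j$ (when $s=3$) or of $R$ (when $s=2$, $r\in\{(0,1),(1,0)\}$ or $s=1$, $r\in\{(0,2),(2,0)\}$) gives the sharper exponent $\alpha=1$, which makes the threshold $\beta\geq -1$ hold.

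For the non-continuity direction ($\beta\in J_{discont}(\phi)$), Theorem \ref{noncont} directly handles 2(b)(ii)$(\alpha)$, 2(b)(ii)$(\beta)$ and the subcase 2(c) with $|P_I|\leq 2$, yielding respectively the intervals $[-1,-2/3)$, $[-1,-5/6)$, $[-1,+\infty)$ displayed in $J_d$. The remaining contributions to $J_{discont}(\phi)$ all concern the boundary weight $\beta=-1$: the discontinuity on $H^2(\DD^3)$ in cases 1(b), 2(b)(ii)$(\gamma)$ and 2(b)(ii)$(\delta)$ is part of the characterization established in \cite{Baytridisc} in terms of the invariants $s$ and $r$, while case 1(b) for $\beta>-1$ follows from \cite{KSZ08}.

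The principal difficulty lies in two spots. The first is the treatment of subcase 2(b)(ii)$(\epsilon)$: one must adapt the Morse-type computation of Theorem \ref{caspart} to three stronger non-degeneracy regimes and obtain in each one the sharp exponent $\alpha=1$, which requires carefully choosing the diffeomorphism that diagonalises the dominant quadratic part without collapsing the imaginary data. The second is the verification that the local exponents $\alpha$ from Theorem \ref{caspart} are indeed optimal, so that the intervals for $J_c$ and $J_d$ fit together (apart from the single point $\beta=-1$, which is handled separately through \cite{Baytridisc}); this optimality ultimately rests on the lower bounds produced by Theorem \ref{noncont}, so the match between $J_c$ and $J_d$ is built into the contrapositive parts of the proof.
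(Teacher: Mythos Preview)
Your proposal is correct and follows essentially the same route as the paper: verify condition (3) of Theorem \ref{result_gen} locally via Lemma \ref{majolambda} (cases 1(a), 2(a), 3), via Theorem \ref{caspart} combined with Corollary \ref{cont} (subcases $(\alpha)$--$(\delta)$), and via the $\alpha=1$ estimate from \cite{Baytridisc} (subcase $(\epsilon)$); then handle discontinuity through Theorem \ref{noncont}, \cite{KSZ08}, and Theorem 4.1 of \cite{Baytridisc}. Your threshold computation $\beta\geq 1-2\alpha$ is exactly what the paper uses implicitly.

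One remark on your final paragraph: there is no need to verify that the exponents $\alpha$ are optimal or that the intervals $J_c(\phi,\xi)$ and $J_d(\phi,\xi)$ ``fit together''. They do \emph{not} cover $[-1,+\infty)$ in general (for instance in case $(\alpha)$ there is a genuine gap $[-2/3,0)$ on which the theorem makes no claim), and the statement only asserts boundedness on $J_{cont}(\phi)$ and unboundedness on $J_{discont}(\phi)$ separately. So that ``second difficulty'' is not part of the proof at all.
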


\begin{proof}
Let $\xi= e^{i\theta_0} \in \TT^3$ and $\beta \in J_{cont}(\phi) $. We intend to apply Theorem \ref{result_gen} and to do that, we need to satisfy inequality (\ref{eq:carlesonbord}).  There are several cases to consider. The first case is $\phi(\xi) \in \TT^3.$ Applying Lemma \ref{majolambda}, we get that there exist $C>0$ and a neighbourhood $O$ of $\theta_0$ such that for all $\eta\in \TT^3$ and all $\ovd \in (0,2)^3,$ 
$$\lambda_3(\lbrace \theta \in O : \m{\phi_j(\theta)-\eta_j}<\delta_j,\ j=1,2,3 \rbrace) \leq C \dfrac{\prod\limits_{j=1}^3\delta_j^{2+\beta}}{\prod\limits_{k=1}^3\omega_{\lbrace 1,2,3\rbrace,k}(\ovd)^{1+\beta}}. $$
The second case is $\phi(\xi) \in \TT^2\times \DD \cup \TT \times \DD \times \TT \cup \DD \times \TT^2.$ There exists $I = \lbrace i_1,i_2 \rbrace $ such that $\phi_I(\xi) \in \TT^2$. If $\nabla\phi_{i_1}(\xi), \nabla\phi_{i_2}(\xi)$ are linearly independent then applying Lemma \ref{majolambda}, there exist $O$ a neighbourhood of $\theta_0$ and $C>0$ such that for all $\eta \in \TTI$ and all $\ovd \in (0,2)^{\m{I}},$  
$$\lambda_3(\lbrace \theta \in O : \m{\phi_j(\theta)-\eta_j}<\delta_j,\ j\in I \rbrace) \leq C \dfrac{\prod\limits_{j\in I}\delta_j^{2+\beta}}{\prod\limits_{k=1}^3\omega_{I,k}(\ovd)^{1+\beta}}. $$
Now, if $\nabla\phi_{i_1}(\xi), \nabla\phi_{i_2}(\xi)$
are linearly dependent and $\m{P_I} = 3$, there are also several cases to consider. Firstly, if $s(\phi,I,\xi) = 1 $ and $r(\phi,I,\xi) = (0,0)$ or $r(\phi,I,\xi) = (1,0) $ or $(0,1) $ or $r(\phi,I,\xi) = (1,1) $ or  if  $s(\phi,I,\xi) = 2$ and $r(\phi,I,\xi) = (0,0),$ then, applying Theorem \ref{caspart}, we get that there exist $O$ a neighbourhood of $\theta_0$ and  $C>0$ such that for all $\eta \in \TTI $ and all $\ovd \in (0,1)^{\m{I}}$, 
$$\lambda_3(\lbrace \theta \in O : \m{\phi_j(\theta)-\eta_j}<\delta_j,\ j\in I \rbrace) \leq C( \delta_{i_1}\delta_{i_2})^\alpha $$ with $\alpha + \dfrac{\m{P_I}}{\m{I}}(1+\beta) \geq 2+\beta$ (this inequality on $\alpha$ holds because $ \beta \in J_{cont}(\phi) \subset J_c(\phi,\xi)$). Then applying the first point of Corollary \ref{cont}, we deduce that
$$\lambda_3(\lbrace \theta \in O : \m{\phi_j(\theta)-\eta_j}<\delta_j,\ j\in I \rbrace) \leq C \dfrac{\displaystyle\prod_{j\in I}\delta_j^{2+\beta_1}}{\dprod_{k=1}^3 \wik^{1+\beta_2}}.$$
(It is the application of Corollary \ref{cont} that gives the bounds of the different $J_{c}(\phi, \xi) $ for such $\xi$).
Secondly, if $s(\phi,I,\xi) =3$ and $r(\phi,I,\xi) = (0,0) $  or $s(\phi,I,\xi) = 2$ and $r(\phi,I,\xi) = (1,0) $ or $(0,1)$ or $s(\phi,I,\xi) = 1$ and $r(\phi,I,\xi) = (2,0) $ or $(0,2),$ then, following the proof of the fact of Theorem $5.1$ of \cite{Baytridisc}, there exist $O$ a neighbourhood of $\theta_0$ in $\RR^3$ and $C>0$ such that for all $\eta \in \TTI $ and  all $\ovd \in (0,2)^{\m{I}}, $
$$\lambda_3(\lbrace \theta \in O : \m{\phi_j(\theta)-\eta_j  } <\delta_j,\  j \in I \rbrace) \leq C \delta_{i_1}\delta_{i_2}, $$ 
with $1+\dfrac{\m{P_I}}{\m{I}}(1+\beta) \geq 2+\beta.$ Then, applying the first point of Corollary \ref{cont}, we get 
$$ \lambda_3(\lbrace \theta \in O : \m{\phi_j(\theta) - \eta_j} < \delta_j,\ j \in I\rbrace) \leq C\dfrac{\prod\limits_{j\in I}\delta_j^{2+\beta}}{\prod\limits_{k=1}^3\omega_{I,k}(\ovd)^{1+\beta}}. $$ 
Finally, if $\phi(\xi) \in  \DD^2\times \TT \cup \DD \times \TT\times \DD\cup \TT \times \DD^2$, then there exists $i_1 \in \lbrace 1,2,3 \rbrace$ such that $\phi_{i_1}(\xi) \in \TT$. As $\nabla\phi_{i_1}(\xi) \neq 0$
, applying Lemma \ref{majolambda}, we get that there exist $O$ a neighbourhood of $\theta_0$ and $C>0$ such that for all $\eta \in \TT$ and all $\delta \in (0,1) $, 
$$\lambda_3(\lbrace\theta \in O : \m{\phi_{i_1}(\theta) - \eta_{i_1}} < \delta\rbrace) \leq C\dfrac{\delta}{\prod\limits_{k=1}^3\omega_{\lbrace {i_1} \rbrace,k}(\ovd)^{1+\beta}}. $$ 
Thus, we have shown that for all $\theta_0 \in \RR^d$, all $I \subset \I{d}$ such that  $\phi_I(\theta_0)\in \TTI$ and $I$ is maximal with respect to this property, there exist a neighbourhood $O$ of $\theta_0$ in $\RR^d$ and $C>0$ such that for all $\eta \in \TTI$ and all $\ovd \in (0,1)^{\m{I}},$
$$\lambda_d(\lbrace \theta \in O : \m{\phi_j(\theta)-\eta_j}<\delta_j,\ j\in I \rbrace) \leq C \dfrac{\prod\limits_{j\in I}\delta_j^{2+\beta}}{\prod\limits_{k=1}^3\omega_{I,k}(\ovd)^{1+\beta}}.$$
Applying Theorem \ref{result_gen} we get that $C_\phi : \Ab \rightarrow \Ab$ is bounded. 

\smallskip

Now, let $\beta \in J_{discont}(\phi)$. There exists $\xi \in \TT^3$ such that $\beta \in J_{d}(\phi, \xi)$. There are several cases to consider. First, if $\phi(\xi) \in \TT^3$, then $d\phi(\xi) $ is not invertible and the necessity from Theorem $1$ of \cite{KSZ08} gives that $C_\phi : \Ab \rightarrow \Ab$ is not bounded. Now, if $\phi(\xi)  \in \TT^2\times \DD \cup \TT \times \DD \times \TT \cup \DD \times \TT^2,$ let $I = \lbrace i_1,i_2\rbrace $ be such that $\phi_I(\xi) \in \TTI$ and $\nabla\phi_{i_1}(\xi), \nabla\phi_{i_2}(\xi) $ are linearly dependent. Suppose first that $\m{P_I} = 3. $ If $s(\phi,I,\xi)=1 $ and $r(\phi,I,\xi) =  (0,0) $ or  $r(\phi,I,\xi) =  (1,0) $ or $r(\phi,I,\xi) =  (0,1),$ then applying Theorem \ref{noncont}, we get that $C_\phi : \Ab \rightarrow \Ab$ is not bounded. If $s(\phi,I,\xi) = 1$ and $r(\phi,I,\xi) = (1,1)$ or if $s(\phi,I,\xi)=2$ and $r(\phi,I,\xi)=(0,0)$, then applying Theorem $4.1$ of \cite{Baytridisc}, we get that $C_\phi : \Ab \rightarrow \Ab$ is not bounded. Now, if $\m{P_I} \leq 2$ and $s(\phi,I,\xi)=1 $ and $r(\phi,I,\xi) =  (0,0) $ or  $r(\phi,I,\xi) =  (1,0) $ or $r(\phi,I,\xi) =  (0,1)$ or $s(\phi,I,\xi)=2 $ and $r(\phi,I,\xi) =  (0,0) $, then applying Theorem \ref{noncont}, we get that $C_\phi : \Ab \rightarrow \Ab$ is not bounded. 
\end{proof} 


\section{Examples} \label{sec:examples}
We are going to give examples of the cases $2)b)\alpha), \beta), \gamma), \delta)$ and $\veps)$ of Theorem \ref{general} and to discuss the optimality of $J_c(\phi,\cdot)$ and $J_d(\phi,\cdot)$ in these cases. 

\medskip

\noindent Case 1: $s(\phi,I,\xi) = 1$ and $r(\phi,I,\xi) = (0,0)$.\\

\noindent \textbf{Example 1.} Let $\phi : \DD^3 \rightarrow \DD^3 $ be defined by $\phi(z_1,z_2,z_3) = (z_1z_2z_3, z_1z_2z_3, 0).$\\
By Proposition $5$ of \cite{Ko22}, we know that $C_\phi$ is bounded on $A^2_\beta(\DD^3) $ if and only if $\beta \geq 0$. In particular, the interval $[0,+\infty)$ is optimal for $J_c(\phi)$ in this case. \\

\noindent \textbf{Example 2.} Let  $\varphi :  \mathbb{D} \rightarrow \mathbb{D} $ be defined by $\varphi(z) = \dfrac{3+6z-z^2}{8} = 1+ \dfrac{z-1}{2}-\dfrac{(z-1)^2}{8}.$ We then consider \begin{center}
\begin{tabular}{ccccc}
&$\phi : $& $\DD^3$& $\rightarrow$& $\DD^3$ \\
&&$(z_1,z_2,z_3)$&$\mapsto$ & $(\varphi(z_1)\varphi(z_2)\varphi(z_3),\varphi(z_1)\varphi(z_2)\varphi(z_3),0).$
\end{tabular}
\end{center}

Doing a fourth-order Taylor expansion and the  change of variables $u_1 = \theta_1+\theta_2+\theta_3$, $u_2=\theta_2$ and $u_3=\tr$, we get:
\begin{align*}
\text{Im}\phi_j\circ U^{-1}(u) &= \dfrac{1}{2}u_1+\dfrac{1}{24}u_1^3-\dfrac{3}{16}\left(u_1^2(u_2+u_3)-u_1(u_2^2+u_3^2) +u_2 u_3(u_2+u_3)\right)+\dfrac{3}{8}u_1 u_2 u_3\\
& +O(\vert u_1\vert^4+\vert u_2\vert^4+\vert u_3\vert^4)\\
\text{Re}\phi_j \circ U^{-1}(u)& = 1-\dfrac{1}{8}u_1^2 +  O(\vert u_1\vert^4+\vert u_2\vert^4+\vert u_3\vert^4)
\end{align*}
Thus, $s(\phi,I,e) = 1$ and $R(u_2,u_3) =  0$ so $r(\phi,I,e) = (0,0)$ and we are indeed in Case 1.\\
Taking $u_2,u_3 \in [-\delta^{1/4},\delta^{1/4}]$ and  $u_1 \in I_{u_2,u_3}:= \left\lbrace u_1 :\left\vert \dfrac{u_1}{2}-\dfrac{3}{16}u_2u_3(u_2+u_3)\right\vert \leq \delta \right\rbrace$, it is easy to check that $\m{u_1} \lesssim \delta^{3/4}$ and we get:
\begin{align*}
\vert \text{Im}\phi_j\circ U^{-1 }(u) \vert &\lesssim \delta+\delta^{9/4}+\delta^{3/2}(\delta^{1/4}+\delta^{1/4})+\delta^{3/4}(\delta^{1/2}+\delta^{1/2})+\delta^{5/4}+O(\delta^{3}+\delta+\delta)\\
&\lesssim \delta\\
\vert \text{Re}\phi_j\circ U^{-1}(u) -1 \vert &\lesssim \delta^{3/2} +O(\delta^{3}+\delta+\delta)
\lesssim \delta.
\end{align*}
Thus, $$\lambda_3\left( \lbrace u\in [-\pi, \pi)^3:\ \m{\phi_j(u)-1}\lesssim \delta,\ j=1,2\rbrace\right)  \gtrsim \delta^{1+1/2}.$$ 
Applying Corollary \ref{discont} with $\alpha = \frac 3 4,$ $ \m{P_I} = 3 $ and $\m{I} = 2$, we get that $C_\phi$ is not bounded on $A^2_\beta(\DD^3) $ for $\beta < \dfrac{-1}{2}$. \\
Now, let us discuss the continuity of $C_\phi$ on $A^2_\beta(\DD^3)$. The only maximal $I$ for which $\phi_I(e^{i\theta_0})\in\TTI$ for some $\theta_0\in\RR^3$ is $I=\{1,2\}$ and this holds only for $\theta_0 = (0,0,0)$ (see Lemma $3.11$ of \cite{Baytridisc}). 
Thus, we intend to prove that there exist $C>0$ and a neighbourhood $O$ of $0$ in $\mathbb{R}^3$ such that for all $\bar{\delta}\in (0,1)^2$ and  all $\eta \in \mathbb{T}^2$,
$$\lambda_3(\lbrace \theta \in O : \vert \phi_j(\theta)-\eta_j\vert \leq \delta_j,\ j=1,2\rbrace ) \leq C(\delta_1\delta_2)^{3/4} .$$
Note that for $j=1,2$, 
\begin{align*}
\vert \varphi(z_1)\varphi(z_2)\varphi(z_3) - \eta_j\vert < \delta_j \Rightarrow \m{\varphi(z_i)} \geq 1-\delta_j \Rightarrow \m{\varphi(z_i)}^2 \geq 1-2\delta_j.
\end{align*}
But, doing a fourth-order Taylor expansion of $\m{\varphi}^2$ gives 
$$\m{\varphi(e^{i\theta})}^2 = 1- \dfrac{3}{64}\theta^4 + O(\m{\theta}^5). $$ 
So, $\m{\theta_k} \leq c\delta_j^{1/4}$ if $\m{\phi_j(\theta)-\eta_j} \leq \delta_j,\ j=1,2,$ $k=1,2,3.$ 
Now, fix $\td, \tr $ such that $\m{\theta_k} \lesssim \min(\delta_1,\delta_2)^{1/4}.$ Then for $j=1,2$, the inequality $\m{\varphi(e^{i\tu})\varphi(e^{i\td})\varphi(e^{i\tr})-\eta_j}\leq \delta_j$ implies that ${\m{\varphi(e^{i\tu}) - h_{j,\td,\tr}} \leq c'\delta_j}$
for some $h_{j,\td,\tr}$ and $c'>0$ independent of $\ovd.$ Arguing as in the proof of Lemma \ref{lem:meanvalue}, we find that $\theta_1$ belongs to some interval of length $\lesssim \min(\delta_j).$
Therefore, we have found $O$ a neigbourhood of $0$ in $\mathbb{R}^3$ such that for all $\eta \in \TT^2$ and all $\ovd \in (0,1)^2,$
$$\lambda_3(\lbrace \theta \in O : \m{\phi_j(\theta)-\eta_j}<\delta_j,\ j = 1,2 \rbrace) \lesssim \min(\delta_j)^{3/2}\leq (\delta_1\delta_2)^{3/4}.$$
Then, applying the second point of Corollary \ref{cont}, we deduce that $C_\phi$ is bounded on $\Ab$ as soon as $\beta \geq \dfrac{-1}{2}$. To conclude, $C_\phi$ is bounded on $\Ab$ if and only if $\beta\geq \dfrac{-1}{2}$. 

It should be observed that these previous examples illustrate that the knowledge of $s$ and $r$ is not sufficient enough to study the continuity on any $\Ab$, contrary to what happens for the Hardy space. We have also proved that in the case $s(\phi,I,\xi)=1$ and $r(\phi,I,\xi)=(0,0)$, the best (=smaller) value for $J_d(\phi,\xi)$ that we can expect is $[-1,-1/2).$

\medskip

\noindent Case 2: $s(\phi,I,\xi) =1$ and $r(\phi,I,\xi) = (1,0)$ or $(0,1)$. \\

\noindent \textbf{Example.} For $\veps\geq 0,$  we define the map $F_\veps$ on $\CC$ by
$$F_\veps(z)=\frac{3+6z-z^2}8+2i\veps(z-1)^2-i\veps(z-1)^3.$$
It is shown in \cite{Baytridisc} that there exists $\veps_1>0$ such that $F_\veps\in\mathcal O(\DD,\DD)$ provided $\veps\in[0,\veps_1)$ and $\m{F_\veps(z)} = 1 $ if and only if $z=1$. We then consider
\begin{center}
\begin{tabular}{ccccc}
&$\phi : $& $\DD^3$& $\rightarrow$& $\DD^3$ \\
&&$(z_1,z_2,z_3)$&$\mapsto$ & $(F_0(z_1)F_b(z_2)F_0(z_3),F_0(z_1)F_b(z_2)F_c(z_3),0)$

\end{tabular}
\end{center}
with $b,c\in(0,\veps_1)$. We write 
$c_j = \left\lbrace\begin{array}{ll}
0 \text{ if } j = 1\\
c \text{ if } j = 2 
\end{array}\right. .$ 
Doing a third-order Taylor expansion  and then doing the linear change of variables  ${u_1 = \tu+\td+\tr}$, $u_2= \td$ and $u_3 = \tr$, we get 
\begin{align*}
\text{Re}\phi_j\circ U^{-1}(u) &= 1-\dfrac{1}{8}u_1^2+ bu_2^2u_1+c_ju_3^2u_1+O(u_1^4+u_2^4+u_3^4)\\
\text{Im}\phi_j\circ U^{-1}(u) &= \dfrac{1}{2}u_1-2(bu_2^2+c_ju_3^2)+\dfrac{1}{24}u_1^3-\dfrac{3}{16}(u_1^2u_2+u_1^2u_3-u_1u_2^2-u_1u_3^2+u_2^2u_3^2)\\
&\quad+\dfrac{3}{8}u_1u_2u_3+O(u_1^4+u_2^4+u_3^4). 
\end{align*}
Hence, $s(\phi,I,e) = 1$ and $R(u_2,u_3) = (-2bu_2^2)-(-2bu_2^2-2cu_3^2) = {2}cu_3^2$ so $r(\phi,I,e) =(1,0)$ and we are indeed in Case $2$.\\
We consider $u_2 \in [-\delta^{1/4},\delta^{1/4}]$ and  $u_3 \in [-\delta^{1/2},\delta^{1/2}]$. For those $u_2, u_3$, we choose \\
$u_1 \in I_{u_2,u_3} := \left\lbrace u_1 : \left\vert \dfrac{u_1}{2}-2bu_2^2-\dfrac{3}{16}(u_2^2u_3+u_3^2u_2)\right\vert \leq \delta \right\rbrace$. It is easy to check that $\vert u_1\vert \lesssim \delta^{1/2}$. So, 
\begin{align*}
\vert \text{Re}\phi_1 \circ U^{-1}(u) - 1\vert &\lesssim \delta+\delta+ O\left(\delta^{2}+\delta+\delta^{2}\right)\lesssim \delta\\
\vert \text{Re}\phi_2\circ U^{-1}(u) - 1\vert &\lesssim \delta+\delta
+\delta^{3/2}+ O(\delta^{2}+\delta+\delta^{2})\lesssim \delta\\
\vert \text{Im}\phi_1\circ U^{-1}(u) \vert &\lesssim \delta+\delta^{3/2}+\delta^{1+1/4}+\delta^{1+1/2}+\delta+\delta^{1+1/2}+\delta^{1+1/2+1/4}+ O(\delta)\lesssim \delta\\
\vert \text{Im}\phi_2 \circ U^{-1}(u) \vert &\lesssim \delta+\delta+O(\delta)\lesssim \delta.\\
\end{align*}
Thus, 
    $$\lambda_3\left(\lbrace u\in [-\pi, \pi)^3 : \m{\phi_j(u)-1}\lesssim \delta,\ j=1,2\rbrace\right) \gtrsim \delta^{1+3/4}.$$
Applying Corollary \ref{discont}, we get that $C_\phi$ is not bounded on $A^2_\beta(\DD^3) $ for $\beta < \dfrac{-3}{4}$. \\
Now, about the continuity. The only maximal $I$ for which $\phi_I(e^{i\theta_0})\in\TTI$ for some $\theta_0\in\RR^d$ is $I=\{1,2\}$ and this only holds for $\theta_0 =(0,0,0)$ (see Lemma $3.12$ of \cite{Baytridisc}). As before, we will be able to conclude if we prove that there exist $C>0$ and a neighbourhood $O$ of $0$ in $\mathbb{R}^3$ such that for all $\bar{\delta}\in (0,1)^2$ and all $\eta \in \mathbb{T}^2$, we have 
$$\lambda_3(\lbrace \theta \in O : \vert \phi_j(\theta)-\eta_j\vert \leq \delta_j,\ j=1,2\rbrace ) \leq C(\delta_1\delta_2)^{7/8}.$$
Note that 
\begin{align*}
\vert \phi_j(z) - \eta_j\vert < \delta_j \Rightarrow \left\lbrace\begin{array}{rclll}
\m{F_0(z_1)}^2&\geq 1-2\delta_j\\
\m{F_b(z_2)}^2&\geq 1-2\delta_j\\
\m{F_c(z_3)}^2&\geq 1-2\delta_j
\end{array}\right. .
\end{align*}
But, doing a fourth-order Taylor expansion of $\m{F_0}^2, \m{F_b}^2$  and $\m{F_c}^2$ gives 
\begin{align*}
\m{F_0(e^{i\tu})}^2 &= 1- \dfrac{3}{64}\tu^4 + O(\m{\tu}^5)\\
 \m{F_b(e^{i\td})}^2 &= 1+\left(4b^2- \dfrac{3}{64}\right)\td^4 + O(\m{\td}^5)\\
\m{F_c(e^{i\tr})}^2 &= 1+\left(4c^2- \dfrac{3}{64}\right)\tr^4 + O(\m{\tr}^5) .  
\end{align*}
So, $\m{\theta_i} \lesssim \delta_j^{1/4}$ if $\m{\phi_j(\theta)-\eta_j} \leq \delta_j$. Then, we define $f(\theta)  =\kappa_2\Imm\phi_1(\theta)-\kappa_1\Imm\phi_2(\theta)$. If $\delta_1 \leq \delta_2$,  we argue as above by considering the diffeomorphism $U : O_1 \rightarrow O_2$ defined by $U(\theta) = \left( \dfrac{\Imm\phi_1}{\kappa_1}, \td,\tr\right)$. As $dU(0) = I_3$, we get 
\begin{align*}
f\circ U^{-1}(u) &= cu_3^2+ O(\m{u_1}^3+\m{u_2}^3+\m{u_3}^3).
\end{align*} 
As $c\neq 0$, $\dfrac{\partial^2f(0)}{\partial u_3^2} \neq 0$ and applying Morse's lemma to $f$ regarding $u_3$, we get  $V : O_2'\subset O_2 \rightarrow O_3$  a diffeomorphism defined by ${V(u_1,u_2,u_3) = (u_1,u_2,v_3(u_1,u_2,u_3))}$ and a  $\mathcal C^1-$map $h : \mathbb{R}^2 \rightarrow \mathbb{R}$  such that $f \circ (V \circ U)^{-1}(v) = cv_3^2 + h(v_1,v_2)$. Now, set $O = U^{-1}(O'_2)$. For $\theta \in O \cap \phi^{-1}_I(\Si)$, considering $v = V\circ U(\theta)$, we have
$$\vert \kappa_1 v_1 -\Imm\eta_1 \vert \leq \delta_1,\ \vert v_3^2 \pm h(v_1,v_2) - (\kappa_2\Imm\eta_1-\kappa_1\Imm\eta_2) \vert \leq \delta_2 \hspace{1mm} \text{ and} \hspace{1mm } \m{v_2}\lesssim \delta_1^{1/4}.$$ 
So, 
$$ \lambda_3( V\circ U(O\cap \phi^{-1}_I(\Si))) 
\lesssim \delta_1^{5/4}\delta_2^{1/2} 
\leq (\delta_1\delta_2)^{7/8}.$$ 
If $\delta_2\leq \delta_1$, we replace $U$ by $U'$ defined by $U'(\theta) = \left( \dfrac{\Imm\phi_2}{\kappa_2}, \td,\tr\right)$ to get the same result. In conclusion, $C_\phi$ is bounded on $A^2_\beta(\DD^3)$ if and only if $\beta \geq \dfrac{-3}{4}$.
In particular, in this case, one cannot expect a better (=bigger) set than $[-3/4,+\infty)$ for $J_c(\phi,\cdot)$ and a better set than $[-1,-3/4)$ for $J_d(\phi,\cdot)$.

\medskip

\noindent Case 3: $s(\phi,I,\xi ) =1 $ and $r(\phi,I,\xi) = (1,1). $\\

\textbf{Example.} We consider \begin{center}
\begin{tabular}{ccccc}
&$\phi : $& $\DD^3$& $\rightarrow$& $\DD^3$ \\
&&$(z_1,z_2,z_3)$&$\mapsto$ & $(F_0(z_1)F_b(z_2)F_0(z_3),F_0(z_1)F_0(z_2)F_c(z_3),0)$

\end{tabular}
\end{center}
with $b,c \in(0,\veps_1)$. Doing the change of variables $u_1 = \tu+\td+\tr, \ u_2 =\td,$ and $ u_3 = \tr, $ we get: 
\begin{align*}
    \Ree\phi_j \circ U^{-1}(u)&= 1-\dfrac{1}{8}u_1^2+O(\m{u_1}^3+\m{u_2}^3+\m{u_3}^3)\\
    \Imm\phi_1\circ U^{-1}(u) &= \dfrac 12 u_1 -2bu_2^2 +O(\m{u_1}^3+\m{u_2}^3+\m{u_3}^3)\\
    \Imm\phi_2\circ U^{-1}(u) & = \dfrac12 u_1- 2cu_3^3 +O(\m{u_1}^3+\m{u_2}^3+\m{u_3}^3). 
\end{align*}
Thus, $s(\phi,I,e) = 1 $ and  $R(u_2,u_3) = -2bu_2^2+2cu_3^2 $ so $r(\phi,I, 0) = (1,1).$ We are in Case $3$. Applying Theorem \ref{general}, we get that $C_\phi : \Ab\rightarrow \Ab$ is bounded if and only if $\beta >-1. $

\medskip

\noindent Case 4: $s(\phi, I,\xi) = 2$ and $r(\phi,I,\xi) = (0,0)$.

\noindent\textbf{Example 1. }Let $\phi : \begin{array}{ccc}
\DD^3 & \rightarrow & \DD^3\\
(z_1,z_2,z_3) & \longmapsto & \left(\dfrac{1}{2}\left(z_1z_2+\dfrac{1+z_3}{2}\right),\dfrac{1}{2}\left(z_1z_2+\dfrac{1+z_3}{2}\right),0\right)
\end{array}$.
 Let $\delta\in (0,1)$. We start by doing the change of variables ${u_1 = \tu+\td},\ {u_2 = \td}$ and $u_3 = \tr$. We then have 
$$\phi(e^{iu}) = \left(\dfrac{1}{2}\left(e^{iu_1}+\dfrac{1+e^{iu_3}}{2}\right),\dfrac{1}{2}\left(e^{iu_1}+\dfrac{1+e^{iu_3}}{2}\right),0\right).$$
By doing a third-order Taylor expansion and the linear change of variables $v_1 = u_1+u_3/2$ and $v_3 = u_3$, we get: 
\begin{align*}
&\Ree\phi_j\circ (V\circ U)^{-1}(v) = 1 - \dfrac{1}{4}\left(v_1-\dfrac{1}{2}v_3\right)^2-\dfrac{2}{16}v_3^2 +O(v_1^4+v_3^4)\\
&\Imm\phi_j\circ (V\circ U)^{-1}(v) = \dfrac{1}{2}v_1+\dfrac{1}{8}v_1^2v_3-\dfrac{1}{16}v_1v_3^2-\dfrac{1}{32}v_3^3+O(v_1^4+v_3^4).
\end{align*}
Hence, $s(\phi,I,e)=2$ and $R(v_2,v_3) = 0$ so $r(\phi,I,e)=(0,0)$ and we are indeed in Case $4$.\\
Considering $\m{v_1}\leq \delta$ and $\m{v_3}\leq \delta^{1/2}$, we have 
\begin{align*}
\m{\Ree\phi_j\circ   (V\circ U)^{-1}(v)-1} &\lesssim \delta^2+\delta^{3/2}+\delta+O(\delta^4+\delta^2)\lesssim \delta\\
\m{\Imm\phi_j\circ   (V\circ U)^{-1}(v)}&\lesssim \delta+\delta^{2+1/2}+\delta^{2}+\delta^{3/2}+O(\delta^4+\delta^2)\lesssim\delta
\end{align*}
Thus, 
$$[-\delta,\delta]\times [-\pi,\pi)\times [-\delta^{1/2},\delta^{1/2}] \subset \lbrace u\in [-\pi, \pi)^3 : \m{\phi_j(u)-1}\lesssim \delta,\ j=1,2\rbrace$$
\text{ and } 
    $$  \lambda_3\left([-\delta,\delta]\times [-\pi,\pi)\times [-\delta^{1/2},\delta^{1/2}] \right) \gtrsim \delta^{1+1/2}.$$
Then, applying Corollary \ref{discont}, we get that $C_\phi$ is not bounded for $\beta <\dfrac{-1}{2}$. Finally, by Theorem \ref{general}, $C_\phi$ is bounded on $\Ab$ if and only if $\beta \geq\dfrac{-1}{2}$. In particular, the interval $[-\frac{1}{2}, +\infty) $ is optimal for $J_c(\phi,\cdot)$ in this case
and we cannot expect something better than $[-1,-1/2)$ for $J_d(\phi,\cdot).$

\smallskip

\noindent \textbf{Example 2. } Let  $\psi :  \mathbb{D} \rightarrow \mathbb{D} $ be defined by $\psi(z) = 1+\dfrac{z-1}{2}-\dfrac{(z-1)^2}{8}+\dfrac{3}{128}(z-1)^3.$ We then consider \begin{center}
\begin{tabular}{ccccc}
&$\phi : $& $\DD^3$& $\rightarrow$& $\DD^3$ \\
&&$(z_1,z_2,z_3)$&$\mapsto$ & $(z_1z_2\psi(z_3),z_1z_2\psi(z_3),0)$
\end{tabular}
\end{center}
Doing the change of variables, $t_1 = \tu+\td$, $t_2 = \td$ and $t_3 = \tr$ then a Taylor expansion and the change of variables $u_1 = 2t_1+\frac 1 2 t_3$, $u_2= t_2$ and $u_3 = t_3$, we get, for $j=1,2,$
$$ 
\phi_j(u) = 1+i u_1 - \dfrac{1}{2}u_1^2-\dfrac{1}{6}u_3^2 +O(\m{u_1}^3+\m{u_2}^3+\m{u_3}^3 ).
$$
Thus, $s(\phi, I, e) = 2 $ and  $r(\phi, I, e) = (0,0)$ and we, indeed, are in Case $4$. By Lemma \ref{fonction}, we know that $\m{\psi(z)} = 1$ if and only if $z = 1$. Moreover, by considering $z_3= e^{i\tr}$ with $\tr \in \mathbb{R}$ and doing a sixth-order Taylor expansion of $\m{\psi}^2$, we get 
$$\m{\psi(e^{i\tr})}^2 = 1- \dfrac{135}{16384}\tr^6 + O(\m{\tr}^6). $$ 
Thus, applying Corollary \ref{coro}, we get that $C_\phi$ is bounded on $\Ab$ if and only if $\beta \geq \dfrac{-1}{6}.$ Again, we observe that the knowledge of $r$ and $s$ is not sufficient to determine the set of $\beta$ such that $C_\phi$ is bounded on $A^2_\beta(\DD^3).$

\medskip

\noindent Case 5: $s(\phi, I, \xi) = 3  $ and $r(\phi,I,\xi) = (0,0). $

\smallskip

\noindent \textbf{Example. } Let $\phi : \begin{array}{ccc}
\DD^3 & \rightarrow & \DD^3\\
(z_1,z_2,z_3) & \longmapsto & \left(\dfrac{z_1+z_2+z_3}{3},\dfrac{z_1+z_2+z_3}{3},0\right)
\end{array}.$  
Doing a Taylor expansion at $e$ gives 
$$\phi_j(\theta) = 1+\dfrac i 3 (\tu+\td+\tr)- \dfrac 1 6(\tu^2+\td^2+\tr^2) +O(\m{\tu}^3+ \m{\td}^3+\m{\tr}^3). $$
Thus, $s(\phi, I, e)= 3 $ and $r(\phi, I, e) = (0,0) $ and, we are in fact in Case $5$. Applying Theorem \ref{general}, we deduce that $C_\phi : \Ab \rightarrow \Ab$ is bounded for all $\beta \geq -1. $

\medskip

\noindent Case 6: $s(\phi, I, \xi) = 2 $ and $r(\phi, I, \xi) = (0,1) $ or $(1,0)$. 

\smallskip

\noindent\textbf{Example. } Let $\veps>0$ be sufficiently small and let
$$\phi : \begin{array}{ccc}
\DD^3 & \rightarrow & \DD^3\\
(z_1,z_2,z_3) & \longmapsto & \left(\dfrac{1}{2}\left(z_1z_2+\dfrac{1+z_3}{2}+i\veps(z_3-1)^3\right),\dfrac{1}{2}\left(z_1z_2+\dfrac{1+z_3}{2}\right),0\right)
\end{array}.$$
 By \cite[Example 7.2]{Baytridisc}, we know that $\phi(\DD^3) \subset \DD^3$. Doing a Taylor expansion at $e$ we obtain
 \begin{align*}
\Imm\phi_1(\theta) &= \dfrac 1 4 (\theta_1+\td+\tr) -\veps \dfrac{\tr^2}{2}+O(\m{\tu}^3 +\m{\td}^3+\m{\tr}^3)\\
\Imm\phi_2(\theta) &= \dfrac 1 4 (\theta_1+\td+\tr) +O(\m{\tu}^3 +\m{\td}^3+\m{\tr}^3)\\
\Ree\phi_1(\theta) &=1- \dfrac 1 4 (\theta_1+\td)^2 -\dfrac 1  8 \tr^2+O(\m{\tu}^3 +\m{\td}^3+\m{\tr}^3)\\
 \end{align*}
Thus, $s(\phi, I, e) = 2 $ and $R(\td,\tr) = -2\veps\tr^2$ so $r(\phi, I,e) = (0,1) $ or $(1,0)$. By Theorem \ref{general}, $C_\phi : \Ab \rightarrow \Ab$ is bounded for any $\beta \geq -1.$

\medskip

\noindent Case 7:  $s(\phi,I, \xi) = 1$ and $r(\phi, I, \xi ) = (0,2)$ or $(2,0).$ 

\smallskip

\noindent\textbf{Example.}  We consider \begin{center}
\begin{tabular}{ccccc}
&$\phi : $& $\DD^3$& $\rightarrow$& $\DD^3$ \\
&&$(z_1,z_2,z_3)$&$\mapsto$ & $(F_0(z_1)F_a(z_2)F_a(z_3),F_0(z_1)F_0(z_2)F_0(z_3),0)$

\end{tabular}
\end{center}
with $a \neq 0$ small enough. Doing the change of variables $u_1 = \tu+\td+\tr,\ u_2 =\td,$ and $ u_3 = \tr $ we get: 
\begin{align*}
    \Ree\phi_j \circ U^{-1}(u)&= 1-\dfrac{1}{8}u_1^2+O(\m{u_1}^3+\m{u_2}^3+\m{u_3}^3)\\
    \Imm\phi_1\circ U^{-1}(u) &= \dfrac 12 u_1 -2a(u_2^2+u_3^2) +O(\m{u_1}^3+\m{u_2}^3+\m{u_3}^3)\\
    \Imm\phi_2\circ U^{-1}(u) & = \dfrac12 u_1 +O(\m{u_1}^3+\m{u_2}^3+\m{u_3}^3). 
\end{align*}
Thus, $s(\phi,I,e) = 1 $ and  $R(u_2,u_3) = -4a(u_2^2+u_3^2) $ so $r(\phi,I, e) = (2,0)$ or $(2,0)$. We are in Case 7. Applying Theorem \ref{general}, we get that $C_\phi : \Ab\rightarrow \Ab$ is bounded for any $\beta \geq-1. $

\section{Applications to composition operators on the bidisc}
\label{sec:bidisc}
\subsection{On the range of composition operators}

In this final section, we use our results to get new informations on composition operators on the bidisc. Fix $\phi:\DD^2\to\DD^2$ a holomorphic map which belongs to $\mathcal C^1(\overline{\DD}^2)$. It is proved in \cite{SZ06b} that $C_\phi$ always maps $A^2_\beta(\DD^2)$ into $A^2_{2\beta+2}(\DD^2)$ (see also Theorem \ref{thm:automaticcontinuity}). Moreover, a jump phenomenon occurs at both sides of the interval $[\beta,2\beta+2]$:
\begin{itemize}
    \item in \cite{KSZ08}, it is shown that if $C_\phi$ does not map $A^2_\beta(\DD^2)$ into itself (namely if there exists $\xi\in\TT^2$ such that $\phi(\xi)\in\TT^2$ and $d\phi(\xi)$ is not invertible), then $C_\phi$ does not map $A^2_{\beta}(\DD^2)$ into $A^2_{\beta+\frac 14-\veps}(\DD^2)$ for any $\veps>0$ provided $\phi\in\mathcal C^2(\overline\DD^2)$.
    \item in \cite{KL25}, it is proved that if $C_\phi$ maps $A^2_\beta(\DD^2)$ into $A^2_{2\beta+2-\veps}(\DD^2)$ for some $\veps>0,$ then it automatically maps $A^2_{\beta}(\DD^2)$ into $A^2_{\beta+\frac 12}(\DD^2)$.    
\end{itemize}

Hence, this leads to introduce, for $\beta_1\geq -1,$ the set $\Lambda_{\beta_1}$ of real numbers $\beta_2\geq\beta_1$ such that there exists $\phi\in\mathcal O(\DD^2,\DD^2)\cap \mathcal C^1(\overline\DD^2)$ such that $C_\phi$ maps $A^2_{\beta_1}(\DD^2)$ into $A^2_{\beta}(\DD^2)$ if and only if $\beta\geq\beta_2.$ 
The above results show that $\beta_1+\frac 14\in \Lambda_{\beta_1}$ and that $((\beta_1+\frac 12,2\beta_1+2)\cup(2\beta_1+2,+\infty))\cap \Lambda_{\beta_1}=\varnothing.$
In \cite{KL25}, it is shown that $\beta_1+3/8\in \Lambda_{\beta_1}$ and it is made the following conjecture: for any $m\in\NN$, $\beta_1+\frac 12-\frac 1{4m}$ belongs to $\Lambda_{\beta_1}$.

We are able to completely describe $\Lambda_{\beta_1}$ and thus to prove this conjecture.

\begin{theorem}\label{thm:conjectureKL}
Let $\beta_1\geq -1.$ Then $\Lambda_{\beta_1}=[\beta_1,\beta_1+1/2]\cup\{2\beta+2\}.$ 
\end{theorem}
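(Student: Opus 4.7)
The upper inclusion $\Lambda_{\beta_1}\subset[\beta_1,\beta_1+1/2]\cup\{2\beta_1+2\}$ is already a consequence of the results recalled before the statement: Theorem \ref{thm:automaticcontinuity} with $d_\phi=2$ shows that $2\beta_1+2$ is a universal upper bound for every admissible critical exponent, and the jump theorem of \cite{KL25} forbids any critical exponent lying in the open interval $(\beta_1+1/2,\,2\beta_1+2)$. The whole content of the theorem is therefore the reverse inclusion: for every $\beta_2\in[\beta_1,\beta_1+1/2]\cup\{2\beta_1+2\}$ I must exhibit a symbol $\phi\in\mathcal O(\DD^2,\DD^2)\cap\mathcal C^1(\overline{\DD}^2)$ whose critical exponent is exactly $\beta_2$.

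Three of the four families of indices admit straightforward candidates. For $\beta_2=\beta_1$ the identity $\phi=\mathrm{id}$ works, since $C_\phi$ is then the inclusion $A^2_{\beta_1}(\DD^2)\hookrightarrow A^2_\beta(\DD^2)$, which is bounded iff $\beta\geq\beta_1$. For the interior $\beta_2\in(\beta_1,\beta_1+1/2)$ I would apply Theorem \ref{thm:examples} with $d=q=2$ and $k=1$ to $\phi(z_1,z_2)=(\varphi(z_1)z_2,\,\varphi(z_1)z_2)$: the resulting critical exponent equals $\beta_1+\tfrac12-\tfrac{1}{2\kappa}$, so choosing $\kappa=\bigl(2(\beta_1+\tfrac12-\beta_2)\bigr)^{-1}>1$ and constructing the required $\varphi$ by the Riemann--Kellogg--Warschawski procedure used in Section ``Two different weights'' realises precisely $\beta_2$. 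For $\beta_2=2\beta_1+2$ the symbol $\phi(z_1,z_2)=(z_1,z_1)$ collapses the bidisc to its diagonal; it satisfies $d_\phi=2$, so Theorem \ref{thm:automaticcontinuity} provides continuity into $A^2_{2\beta_1+2}$, while sharpness is obtained by testing the Carleson condition of Theorem \ref{result_gen} at $I=\{1,2\}$ (for which $|P_I|=1$, $\omega_{I,1}(\ovd)=\min(\delta_1,\delta_2)$, $\omega_{I,2}(\ovd)=1$): with $\delta_1=\delta_2=\delta$ and $\eta=(1,1)$ the preimage has $\lambda_2$-measure $\simeq\delta$ whereas the admissible bound is of order $\delta^{3+2\beta_1-\beta_2}$, so Corollary \ref{discont} forces $\beta_2\geq 2\beta_1+2$.

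The remaining case, and therefore the main obstacle, is the endpoint $\beta_2=\beta_1+1/2$, which is exactly the Kosi\'nski--Laskowski conjecture. My proposal is the very simple ``double product''
$$\phi(z_1,z_2)=(z_1z_2,\,z_1z_2).$$
For this $\phi$ the only maximal $I$ such that $\phi_I$ touches the polytorus is $I=\{1,2\}$, and $P_1=P_2=\{1,2\}$, so $\omega_{I,1}(\ovd)=\omega_{I,2}(\ovd)=\min(\delta_1,\delta_2)$. Because $\phi_1\equiv\phi_2$, the set $\{\theta\in[-\pi,\pi)^2:\ |\phi_j(\theta)-\eta_j|<\delta_j,\ j=1,2\}$ is non-empty only when $|\eta_1-\eta_2|\lesssim\delta_1+\delta_2$, and then $\theta_1+\theta_2$ is pinned to an arc of length $\simeq\min(\delta_1,\delta_2)$ while $\theta_2$ remains free, giving $\lambda_2\simeq\min(\delta_1,\delta_2)$. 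At $\beta_2=\beta_1+1/2$ the right-hand side of the Carleson condition of Theorem \ref{result_gen} reads
$$\frac{(\delta_1\delta_2)^{2+\beta_1}}{\min(\delta_1,\delta_2)^{3+2\beta_1}}=\frac{\max(\delta_1,\delta_2)^{2+\beta_1}}{\min(\delta_1,\delta_2)^{1+\beta_1}}\geq\min(\delta_1,\delta_2),$$
so the inequality is satisfied and Theorem \ref{result_gen} yields boundedness into $A^2_{\beta_1+1/2}$; conversely, at $\delta_1=\delta_2=\delta$ and $\eta=(1,1)$ the right-hand side is of order $\delta^{2+2\beta_1-2\beta_2}$, which for $\beta_2<\beta_1+1/2$ is strictly smaller than the measure $\simeq\delta$, so Corollary \ref{discont} rules out any such $\beta_2$. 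The delicate point, and the reason no smooth polynomial perturbation from Theorem \ref{thm:examples} can reach this endpoint, is the exact balance between the one-dimensional image of $\phi_I$ on the torus (giving exponent $1$ of $\min(\delta_j)$ on the left-hand side) and $|P_I|(1+\beta_2)=2(1+\beta_2)$ on the right-hand side, which coincides at $\beta_2=\beta_1+1/2$ and makes the endpoint attainable rather than merely approachable.
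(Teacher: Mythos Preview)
Your proof is correct and follows essentially the same route as the paper. You use the same four explicit symbols: the identity (the paper uses a slightly different linear map with invertible differential, but yours is simpler and works for the same reason), $(\varphi(z_1)z_2,\varphi(z_1)z_2)$ via Theorem~\ref{thm:examples} for the open interval, $(z_1z_2,z_1z_2)$ for $\beta_1+1/2$, and $(z_1,z_1)$ for $2\beta_1+2$. The only difference is one of presentation: the paper dispatches the two endpoint cases by citing \cite[Examples~7.1]{KL25}, whereas you verify them directly through Theorem~\ref{result_gen} and Corollary~\ref{discont}; your computations are correct. One small point of emphasis: the endpoint $\beta_1+1/2$ is not itself the Kosi\'nski--Laskowski conjecture (that example was already in \cite{KL25}); their conjecture concerned the specific values $\beta_1+\tfrac12-\tfrac1{4m}$, which is subsumed by your treatment of the open interval $(\beta_1,\beta_1+1/2)$.
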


\begin{proof}
For $\beta_2=2\beta_1+2$ (resp. $\beta_2=\beta_1+1/2$), the map $\phi(z)=(z_1,z_1)$ (resp. $\phi(z)=(z_1z_2,z_1z_2)$) does the job (see \cite[Examples 7.1]{KL25}). For $\beta_2=\beta_1$, the map $\phi(z)=((z_1+z_2)/2,(2z_2+z_3)/3)$ works, as any map $\phi$ such that $\mathcal E(\phi)=\{e\}$ and $d\phi(e)$ is invertible. For $\beta_2\in(\beta_1,\beta_1+1/2),$ write it $\beta_2=\beta_1+\frac 12-\frac1{2\kappa}$ with $\kappa>1.$
Let $\varphi\in\mathcal O(\DD,\DD)\cap \mathcal C^1(\overline{\DD})$ be such that for any $z\in\TT,$ $|\varphi(z)|=1$ if and only if $z=1$ and satisfying 
$$|\varphi(e^{i\theta})|=1-c|\theta|^{\kappa}+o(|\theta|^{\kappa})$$
for some $c>0.$ Define $\phi(z)=(z_1\varphi(z_2),z_1\varphi(z_2)).$ Then by Theorem \ref{thm:examples}, $C_\phi$ is continuous from $A_{\beta_1}^2(\DD^2)$ into $A_{\beta_2}^2(\DD^2)$ if and only if $\beta\geq\beta_2$.

For the converse inclusion, we use the quoted results of \cite{SZ06b} if $\beta_2>2\beta_1+2$ and of \cite{KL25} if $\beta\in(\beta_1+1/2,2\beta_1+2)$.
\end{proof}
\begin{remark}
    If $\beta_2=\beta_1+\frac 12-\frac 1{4n}$ for some $n\in\NN$, then we can choose $\phi\in\mathcal C^\infty(\overline{\DD}^2)$ by taking for $\varphi$ either the function $h_n$ of Section \ref{sec:norder}.
\end{remark}

\subsection{A characterization of the extreme cases}

A natural problem arises: for $\phi\in\mathcal C^\infty(\overline{\DD}^2)$ and $n\in\NN\cup\{\infty\}$, is it possible to characterize when $C_\phi$ maps $A^2_\beta(\DD^2)$ into $A^2_{\beta+\frac 12-\frac 1{4n}}(\DD^2)$? In view of the examples of Section \ref{sec:norder}, this seems a very difficult task. Indeed, if in the proof of Corollary \ref{coro}, we choose respectively $\varphi=g_{2n}$ and $\varphi=h_n$, and if we call $\phi$ and $\psi$ the induced self-maps of $\DD^2,$ then we observe that for any $\xi\in\partial\DD^2,$ $\phi(\xi)\in\partial \DD^2$ if and only if $\psi(\xi)\in\partial \DD^2$ and that at these points $\phi$ and $\psi$ share the same first $(n-1)$-th derivatives. However, $C_\phi$ (resp. $C_\psi$) maps $A^2_{\beta_1}(\DD^2)$ into $A^2_{\beta_2}(\DD^2)$ if and only if $\beta_2\geq \beta_1+\frac 12-\frac1{8n}$ (resp. $\beta_2\geq \beta_1+\frac 12-\frac 1{4n}$).

However, for the extreme cases, this is doable and this was done in \cite{KL25}. In this section, we shall provide other characterizations which are maybe easier to state (and for the second one, to prove). Let us start with the characterization of when $C_\phi$ maps $A_{\beta}^2(\DD^2)$ into $A_{\beta+\frac 12}^2(\DD^2)$. It was shown in \cite{KL25} that this holds if and only if $|J(\phi)(\xi)|+|D(\phi)(\xi)|>0$ for all $\xi\in \phi^{-1}(\TT^2)\cap \TT^2$
where 
$$J(\phi)(\xi)=\left|
\begin{array}{cc}
\partial_1\phi_1(\xi)&\partial_2\phi_1(\xi)\\
\partial_1\phi_2(\xi)&\partial_2\phi_2(\xi)\\
\end{array}\right|\textrm{ and }
D(\phi)(\xi)=\left|
\begin{array}{cc}
\partial_1\phi_1(\xi)&\partial_2\phi_1(\xi)\\
-\partial_1\phi_2(\xi)&\partial_2\phi_2(\xi)\\
\end{array}\right|.
$$
Let $\xi\in\phi^{-1}(\TT^2)\cap\TT^2$ such that $J(\phi)(\xi)=D(\phi)(\xi)=0$. We may assume that $\xi=(1,1)$. Then it is easy to show that either $\partial_1\phi_1(\xi)=0$ or $\partial_2\phi_1(\xi)=0.$ Assume $\partial_1\phi_1(\xi)=0.$ Then, since $J\phi(\xi)=0,$ we also have $\partial_1\phi_2(\xi)=0.$ 
By the Julia-Caratheodory theorem (namely Lemma \ref{JC}), we then deduce that $\phi_1(z)=\varphi_1(z_2)$ and $\phi_2(z)=\varphi_2(z_2) $ with $\varphi_1,\varphi_2:\DD\to\DD.$ Conversely, it is easy to see that if $\phi(z)=(\varphi_1(z_2),\varphi_2(z_2)),$ then $J(\phi)(\xi)=D(\phi)(\xi)=0$ for all $\xi\in\overline{\DD}^2.$ Hence we obtain the following result:

\begin{theorem}\label{thm:beta1/2}Let $\phi\in\mathcal O(\DD^2,\DD^2)\cap \mathcal C^1(\overline{\DD}^2)$.
Then $C_\phi$ maps $A^2_{\beta}(\DD^2)$ into $A^2_{\beta+1/2}(\DD^2)$ except if there exist $\varphi_1,\varphi_2\in\mathcal O(\DD,\DD)\cap \mathcal C^1(\overline{\DD})$ such that $\sup\limits_{\theta}|\varphi_1(e^{i\theta})\varphi_2(e^{i\theta})|=1$
and either $\phi(z)=(\varphi_1(z_1),\varphi_2(z_1))$ or $\phi(z)=(\varphi_1(z_2),\varphi_2(z_2)).$
\end{theorem}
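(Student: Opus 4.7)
The plan is to invoke the characterization of Kwon and Lee \cite{KL25}: for $\phi\in\mathcal O(\DD^2,\DD^2)\cap\mathcal C^1(\overline{\DD}^2)$, the operator $C_\phi$ maps $A_\beta^2(\DD^2)$ into $A^2_{\beta+1/2}(\DD^2)$ if and only if $|J(\phi)(\xi)|+|D(\phi)(\xi)|>0$ for every $\xi\in\phi^{-1}(\TT^2)\cap\TT^2$. The task reduces to showing that the negation of this condition is equivalent to $\phi$ being of the stated exceptional form.

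First I would observe the algebraic identities
$$J(\phi)(\xi)+D(\phi)(\xi)=2\,\partial_1\phi_1(\xi)\,\partial_2\phi_2(\xi), \qquad J(\phi)(\xi)-D(\phi)(\xi)=-2\,\partial_2\phi_1(\xi)\,\partial_1\phi_2(\xi),$$
so that $J(\phi)(\xi)=D(\phi)(\xi)=0$ means $\partial_1\phi_1(\xi)\,\partial_2\phi_2(\xi)=0$ together with $\partial_2\phi_1(\xi)\,\partial_1\phi_2(\xi)=0$. Assume such a $\xi\in\phi^{-1}(\TT^2)\cap\TT^2$ exists. Multiplying each $\phi_j$ by a suitable unimodular constant (which does not affect its partial derivatives), we may assume $\phi_1(\xi)=\phi_2(\xi)=1$ so that Lemma \ref{JC} is applicable to each $\phi_j$. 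Up to the symmetry $(z_1,z_2)\leftrightarrow(z_2,z_1)$, we may focus on the case $\partial_1\phi_1(\xi)=0$. By Lemma \ref{JC}, $\phi_1$ does not depend on $z_1$. If moreover $\partial_2\phi_1(\xi)=0$, the same lemma would force $\phi_1$ to be independent of both variables, hence constant of modulus one, contradicting $\phi_1(\DD^2)\subset\DD$. We must therefore have $\partial_1\phi_2(\xi)=0$, and a second application of Lemma \ref{JC} shows that $\phi_2$ also depends only on $z_2$. Writing $\varphi_1,\varphi_2$ for the resulting one-variable symbols (absorbing back the unimodular factors), we obtain $\phi(z)=(\varphi_1(z_2),\varphi_2(z_2))$; the symmetric subcase yields $\phi(z)=(\varphi_1(z_1),\varphi_2(z_1))$.

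For the converse, if $\phi$ has one of these forms then $J(\phi)\equiv 0\equiv D(\phi)$ identically, so the Kwon-Lee condition fails as soon as there exists $\zeta\in\TT$ with $(\varphi_1(\zeta),\varphi_2(\zeta))\in\TT^2$. Since $|\varphi_i|\leq 1$ on $\overline{\DD}$, this is exactly $\sup_\theta|\varphi_1(e^{i\theta})\varphi_2(e^{i\theta})|=1$, giving the stated exceptional clause. The only delicate step is the Julia-Caratheodory reduction eliminating the degenerate case where $\phi_j$ would be constant; this is exactly the content of Lemma \ref{JC}, and the rest of the proof is routine algebra.
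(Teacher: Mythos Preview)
Your proof is correct and follows essentially the same route as the paper: both invoke the Kwon--Lee characterization from \cite{KL25} and then use Lemma~\ref{JC} to show that $J(\phi)(\xi)=D(\phi)(\xi)=0$ forces $\phi$ to depend on a single variable. Your explicit sum/difference identities and your treatment of the degenerate constant case are a bit more detailed than the paper's sketch, but the argument is the same in substance.
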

In other words, $C_\phi$ maps $A^2_{\beta}(\DD^2)$ into $A^2_{\beta+1/2}(\DD^2)$ except if it depends on only one variable and if $\phi(\TT^2)\cap \TT^2\neq\varnothing.$

\medskip

Let us now turn to the characterization of symbols $\phi$ such that $C_\phi$ maps $A_\beta^2(\DD^2)$ into $A_{\beta+\frac 14}^2(\DD^2)$. They are characterized in \cite{KL25} but the characterization is hard to state and hard to prove. We provide a different characterization, in the spirit of the work of this paper and of \cite{Baytridisc}. Let $\phi:\DD^2\to\DD^2$ be holomorphic which extends to a $\mathcal C^3$-smooth function on $\ovdd$. Let $\xi\in\TT^2$ be such that $\phi(\xi)\in\TT^2$ and assume that $\nabla \phi_1(\xi)$ and $\nabla\phi_2(\xi)$
are linearly dependent (namely $J(\phi)(\xi)=0$). We assume that $\xi=e$, $\phi(\xi)=e$ and we write, for $j=1,2,$
\begin{align*}
 \Imm \phi_j(\theta)&=\kappa_j L(\theta)+o(|\theta_1|+|\theta_2|)  \\
\Ree \phi_j(\theta)&=1-Q_j(\theta)+O\left(|\theta_1|^3+|\theta_2|^3\right) 
\end{align*}
where $L$ is a nonzero linear form, $\kappa_j$ are nonzero real numbers and $Q_j$ are quadratic forms.  As for the tridisc, the signature of each $Q_j$ should be $(n_j,0)$ with $n_j\in\{0,1,2\}$ and there exists $s=s(\phi,\xi)\in\{1,2\}$ such that the signature of $Q_1+Q_2$ is equal to $(s,0)$. Taking $L_1,\dots,L_s$ a basis of $L_{j,k}$, where each $Q_j$ is written $Q_j=\dss_{k=1}^{n_j}L_{j,k}^2$, we know that $L$ belongs to $\textrm{span}(L_1,\dots,L_s)$. If $s=1$, we complete $(L_1)$ into a basis $(L_1,L_2)$ of $(\mathbb R^2)^*$ and we write 
$$\Imm \phi_j(\theta)=\kappa_j(L(\theta)+a_j L_1^2(\theta)+b_jL_1(\theta)L_2(\theta)+c_jL_2^2(\theta)\big)+O(|\theta_1|^3+|\theta_2|^3).$$
We define $r(\phi,\xi)=1$ if $c_1\neq c_2$, $r(\phi,\xi)=0$ if $c_1=c_2$.

\begin{theorem}\label{thm:beta1/4}
    Let $\phi\in \mathcal O(\DD^2,\DD^2) \cap \mathcal{C}^3(\overline \DD^2)$ and  let $\beta>-1.$ Then $C_\phi$ maps $A_\beta^2(\DD^2)$ into $A^2_{\beta+\frac 14}(\DD^2)$ if and only if, for all $\xi\in\TT^2$ such that $\phi(\xi)\in\TT^2$, 
    \begin{itemize}
        \item either $J(\phi)(\xi)\neq 0$;
        \item or $J(\phi)(\xi)=0$ and $s(\phi,\xi)=2$;
        \item or $J(\phi)(\xi)=0$ and $s(\phi,\xi)=r(\phi,\xi)=1$.
    \end{itemize}
\end{theorem}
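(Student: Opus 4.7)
The plan is to apply Theorem \ref{result_gen} with $\beta_1 = \beta$ and $\beta_2 = \beta + 1/4$, which reduces the boundedness of $C_\phi : A_\beta^2(\DD^2) \to A_{\beta+1/4}^2(\DD^2)$ to verifying the local inequality \eqref{eq:carlesonbord} at each $\theta_0 \in \RR^2$ for each maximal $I \subset \{1,2\}$ with $\phi_I(\theta_0) \in \TT^{|I|}$. When $|I| = 1$, Lemma \ref{lem:meanvalue} (applied as in the proof of Theorem \ref{thm:automaticcontinuity}) gives $\lambda_2 \lesssim \delta$, which is easily checked to dominate $\delta^{2+\beta}/\prod \omega_{I,k}(\ovd)^{1+\beta+1/4}$ for $\beta > -1$. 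The case $|I| = 2$ (points $\xi\in\TT^2$ with $\phi(\xi)\in\TT^2$) splits according to the geometry listed in the theorem. If $J(\phi)(\xi) \neq 0$, Lemma \ref{majolambda} with exponent $\beta + 1/4$ directly yields \eqref{eq:carlesonbord}, since $\delta_j^{2+\beta+1/4}\leq\delta_j^{2+\beta}$.

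When $J(\phi)(\xi) = 0$ we normalize $\xi = e$, $\phi(e) = e$, $L(\theta) = \theta_1$, and use the Taylor expansions preceding the theorem; one checks that $s = 2$ or $r = 1$ forces $|P_I| = 2$. In the case $s = 2$, change variables by $U(\theta) = (\Imm \phi_1(\theta)/\kappa_1, \theta_2)$ (which is a local diffeomorphism with $dU(e)=I$) and apply the parametrized Morse lemma (Lemma \ref{Morse}) to $f(u) = \Ree \phi_1\circ U^{-1}(u) + \Ree \phi_2\circ U^{-1}(u) - 2 = -(Q_1+Q_2)(u) + O(|u|^3)$ in the $u_2$-variable; its Hessian is non-singular since $Q_1+Q_2$ is positive definite. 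After a second change $V(u_1,u_2)=(u_1,v_2(u_1,u_2))$ one has $f = -v_2^2+h(v_1)$ with $\Imm \phi_1 = \kappa_1 v_1$, which bounds $v_1$ in an interval of length $\lesssim \delta_1$ and $v_2$ in a set of measure $\lesssim \delta_2^{1/2}$ when $\delta_1\leq\delta_2$. In the case $s = 1$, $r = 1$ the same strategy applied to $f = \kappa_2 \Imm \phi_1 - \kappa_1 \Imm \phi_2$ works: its $\theta_2$-Hessian is $2\kappa_1 \kappa_2 (c_1 - c_2) \neq 0$. Swapping the roles of $\phi_1,\phi_2$ when $\delta_2 \leq \delta_1$, we conclude $\lambda_2 \lesssim \min(\delta_1,\delta_2) \cdot \max(\delta_1,\delta_2)^{1/2}\leq(\delta_1\delta_2)^{3/4}$, and Corollary \ref{cont}(a) with $\alpha = 3/4$ closes the sufficiency since $3/4 + (1+\beta+1/4) = 2+\beta$.

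For the necessity, suppose some $\xi\in\TT^2$ satisfies $\phi(\xi)\in\TT^2$, $J(\phi)(\xi) = 0$, $s(\phi,\xi) = 1$ and $r(\phi,\xi) = 0$. After normalization, $\Ree \phi_j(\theta) = 1 - \gamma_j\theta_1^2 + O(|\theta|^3)$ and $\Imm \phi_j(\theta)/\kappa_j = \theta_1 + a_j\theta_1^2 + b_j\theta_1\theta_2 + c\theta_2^2 + O(|\theta|^3)$ with the \emph{same} quadratic coefficient $c$ in both components. Take $\delta\in(0,1)$, $\theta_2 \in [-\delta^{1/3}, \delta^{1/3}]$ and $\theta_1 \in [-c\theta_2^2 - \delta, -c\theta_2^2 + \delta]$; then $|\theta_1| \lesssim \delta^{2/3}$, so $a_j\theta_1^2 \lesssim \delta^{4/3}$, $b_j\theta_1\theta_2 \lesssim \delta$, $\gamma_j\theta_1^2\lesssim\delta^{4/3}$, $|\theta|^3 \lesssim \delta$, and by construction $\theta_1 + c\theta_2^2 = O(\delta)$, so $|\phi_j(\theta) - 1| \lesssim \delta$ for $j = 1, 2$. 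This set has 2-measure $\gtrsim \delta^{4/3}$, and Corollary \ref{discont} with $|I|=2$, $|P_I|\in\{1,2\}$ gives threshold $\delta^{|I|(2+\beta) - |P_I|(1+\beta+1/4)}\leq\delta^{3/2}$, hence $\delta^{4/3}/\delta^{3/2} = \delta^{-1/6} \to \infty$ and $C_\phi$ is not bounded. The degenerate sub-case $|P_I|=1$ (that is, $\phi$ effectively one-variable) is even easier, with $\lambda_2\gtrsim\delta$ beating the threshold $\delta^{11/4+\beta}$ for any $\beta>-1$.

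The hard part is producing the lower bound $\lambda_2\gtrsim\delta^{4/3}$. The naive construction $|\theta_2|\lesssim\delta^{1/2}$, $\theta_1\approx -c\theta_2^2$ only yields $\lambda_2\gtrsim\delta^{3/2}$, which \emph{exactly matches} the Carleson threshold and therefore does not trigger Corollary \ref{discont}. The improvement comes from exploiting $r(\phi,\xi) = 0$: because the $\theta_2^2$-coefficient is common to $\Imm \phi_1/\kappa_1$ and $\Imm \phi_2/\kappa_2$, the same curve $\theta_1 = -c\theta_2^2$ cancels the dominant $\theta_2^2$ contribution in \emph{both} imaginary parts simultaneously, allowing $|\theta_2|$ to be enlarged all the way to $\delta^{1/3}$ (with $|\theta_1|\lesssim\delta^{2/3}$) while the cross terms $b_j\theta_1\theta_2$ and the remainder $|\theta|^3$ stay $O(\delta)$; this widening of the admissible strip is the whole point.
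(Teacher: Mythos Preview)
Your proof is correct and follows essentially the same route as the paper: both handle $|I|=1$ via Lemma~\ref{lem:meanvalue}, $J(\phi)(\xi)\neq 0$ via the linearly-independent case, and in the degenerate cases $s=2$ (resp.\ $s=r=1$) both first straighten out $\Imm\phi_1$ by the change $U$ and then apply the parametrized Morse lemma to $\Ree\phi_1+\Ree\phi_2$ (resp.\ to the difference of imaginary parts), obtaining $\lambda_2\lesssim(\delta_1\delta_2)^{3/4}$ and concluding via Corollary~\ref{cont}(a). For necessity, both use the same $\delta^{1/3}$-thickening along the curve $\theta_1=-c\theta_2^2$ to get $\lambda_2\gtrsim\delta^{4/3}$; your final explanatory paragraph on why $r=0$ is what allows the enlargement from $\delta^{1/2}$ to $\delta^{1/3}$ is a nice addition not spelled out in the paper.
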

\begin{proof}
We first show that if there exists $\xi\in\TT^2$ such that $\phi(\xi)\in\TT^2$ with $s(\phi,\xi)=1$ and $r(\phi,\xi)=0$, then $C_\phi$ does not map $A_\beta^2(\DD^2)$ into $A_{\beta+\frac 14}^2(\DD^2)$. Indeed, assume that $\phi(e)=e$ with $s(\phi,e)=1$ and $r(\phi,e)=0$. First, let us consider the case $ \m{P_I} = 2$ with $I=\{1,2\}$. Doing a linear change of variables, we may assume that $L(\theta)=\theta_1$ and we obtain 
\begin{align*}
    \Imm \phi_j(\theta)&=\kappa_j(\theta_1+b_j\theta_1\theta_2+c\theta_2^2)+o(\theta_1^2+|\theta_2|^3)\\
    \Ree \phi_j(\theta)&=1+O(|\theta_1|^2+|\theta_2|^3).
\end{align*}
Let $\delta>0$, set $\overline\delta=(\delta,\delta)$ and let us choose $\theta_2\in[-\delta^{1/3},\delta^{1/3}]$
and $\theta_1$ such that $|\theta_1+c\theta_2^2|\leq \delta.$ It is obvious that $|\theta_1|\lesssim \delta^{2/3}$ so that, after an easy computation, 
$$|\Imm \phi_j(\theta)|\lesssim\delta\textrm{ and }|\Ree\phi_j(\theta)-1|\lesssim \delta.$$
Hence, 
$$\lambda_2\left(\left\{\theta\in[-\pi,\pi)^2:\ \phi(\theta)\in S(e,\overline\delta)\right\}\right)\gtrsim \delta^{1+\frac 13}.$$
By Corollary \ref{discont}, this prevents $C_\phi$ to map $A_\beta^2(\DD^2)$ into $A_{\beta'}^2(\DD^2)$ when $\beta'<\beta+\frac 13.$ Now, if $\m{P_I} = 1$,
then $\phi$ only depends on one variable and the result follows from Theorem \ref{thm:beta1/2}.

Assume now that for all $\xi\in\TT^2$ with $\phi(\xi)\in\TT^2$, the conditions of Theorem \ref{thm:beta1/4} are satisfied. Let $\overline\delta=(\delta_1,\delta_2)\in(0,+\infty)^2$ and consider $\xi\in\TT^2$ with $\phi(\xi)\in\partial \DD^2$.
If $\phi(\xi)\in \partial \DD^2\backslash \TT^2$, let $I=\{j\}$ be such
that $\phi_j(\xi)\in \TT.$ By Lemma \ref{lem:meanvalue}, there exist $O$ a neighbourhood of $\xi$ in $\TT^2$ and $C>0$ such that 
$$\lambda_2\big(O\cap \phi_I^{-1}(S_{I}(\eta,\ovd))\leq C\delta_j$$
and observe that $1+2\left(1+\beta+\frac 14\right)\geq 2+\beta.$

If $\phi(\xi)\in\TT^2,$ we intend to show that there exist a neighbourhood $O$ of $\xi$ in $\TT^2$ and $C>0$ such that, for all $\eta\in\TT^2$, 
\begin{equation}\label{eq:beta1/4}
\lambda_2(O\cap \phi^{-1}(S(\eta,\overline\delta)))\leq C\delta_1^{\frac 34}\delta_2^{\frac 34}.
\end{equation}
Since $\frac 34+\left(1+\beta+\frac 34\right)\geq 2+\beta$, we will
conclude with Corollary \ref{cont}. We assume that $\xi=\phi(\xi)=e$ and without loss of generality, we assume that $\delta_1\leq\delta_2$. If $J(\phi)(e)\neq 0$, then \eqref{eq:beta1/4} is well-known (see \cite{Ko22}) and we can even improve $\delta_1^{3/4}\delta_2^{3/4}$ into $\delta_1\delta_2$. Assume now that $J(\phi)(e)=0$ and $s(\phi,e)=2$. Doing a linear change of variables, we may assume that $L(\theta)=\theta_1$ and we write 
$$\Imm \phi_j(\theta)=\kappa_j\left(\theta_1+G_j(\theta)\right)$$
with $G_j(0)=0,$ $\nabla G_j(0)=0.$ We set $U(\theta)=(\theta_1+G_1(\theta),\theta_2)$ and observe that there exist two
neighbourhoods $ O_1$ and $ O_2$ of $0$ and $C_1>0$ such that
$U$ is a diffeomorphism from $ O_1$ onto $ O_2$ and 
$$\left\{
\begin{array}{rcll}
|\det(\mathrm dU)|&\leq& C_1&\textrm{on } O_1\\
|\det(\mathrm dU^{-1})|&\leq& C_1&\textrm{on } O_2.
\end{array}\right.$$
We then write
$$\Ree \phi_j(\theta)=1-Q_j(\theta)+H_j(\theta)$$
with $H_j(\theta)=O\left(|\theta_1|^3+|\theta_2|^3\right).$
Because $\mathrm dU(0)=I_2$, for $u\in  O_2,$
$$\Ree \phi_j\circ U^{-1}(u)=1-Q_j(u)+K_j(u)$$
with $K_j(u)=O\left(|u_1|^3+|u_2|^3\right).$
 Let us now define
$$E(\ovd)=\left\{\theta\in\RR^2:\ |\phi_j(\theta)-\eta_j|\leq\delta_j,\ j=1,2\right\}$$
and let us write $\eta_j=x_j+iy_j.$ Provided $u\in U(E(\ovd)\cap O_1),$
\begin{eqnarray}
\nonumber&&|-Q_1(u)-Q_2(u)+K_1(u)+K_2(u)+2-x_1-x_2|\\
&&\quad\quad\quad\quad\leq|\Ree \phi_1\circ U^{-1}(u)-x_1|+|\Ree \phi_2\circ U^{-1}(u)-x_2| \nonumber \\
&&\quad\quad\quad\quad\leq2\delta_2. \label{eq:volumelinearlydependent2}
\end{eqnarray}
We set $F(u)=-Q_1(u)-Q_2(u)+K_1(u)+K_2(u)$. Since the signature of $Q_1+Q_2$ is $(2,0)$, ${\frac{\partial^2 F}{\partial u_2^2}(0)<0}$.
Hence we may apply the parametrized Morse lemma: there exist two neighbourhoods $ O_2'\subset O_2$ 
and $ O_3$ of $0$, a constant $C_2>0$, a diffeomorphism $V: O'_2\to O_3,$ ${V(u_1,u_2)=(u_1,V_2(u_1,u_2))}$ and a $\mathcal C^1$-map
$h:\mathbb R\to\mathbb R$ such that, for all $v\in O_3,$ 
$${F\circ V^{-1}(v)=-v_2^2+h(v_1)}\textrm{ and } 
\left\{
\begin{array}{rcll}
|\det(\mathrm dV)|&\leq C_2&\textrm{on } O'_2\\
|\det(\mathrm dV^{-1})|&\leq C_2&\textrm{on } O_3.
\end{array}\right.$$
We finally set $ O=U^{-1}( O'_2)\subset O_1.$ Therefore,
provided $v\in V\circ U(E(\ovd)\cap O),$ we infer from $|\Imm \phi_1-\Imm \eta_1|\leq\delta_1$ on $E(\delta)$ and from \eqref{eq:volumelinearlydependent2} that
$$\left\{
\begin{array}{rcl}
|v_1-y_1|&\leq&\delta_1/\kappa_1\\
|-v_2^2+h(v_1)+2-x_1-x_2|&\leq&2\delta_2.
\end{array}\right.$$
It then follows from Fubini's theorem that 
$$\lambda_2\big(V\circ U(E(\ovd)\cap O)\big)\lesssim\delta_1\delta_2^{1/2}\leq \delta_1^{3/4}\delta_2^{3/4}$$
which in turn yields \eqref{eq:beta1/4}.

Assume finally $s(\phi,e)=1$ and $r(\phi,e)=1$. We begin by arguing as above and we write 
\begin{align*}
    \Imm\phi_j(\theta)=\kappa_j(\theta_1+a_j\theta_1^2+b_j\theta_1\theta_2+c_j\theta_2^2)+o(|\theta_1|^3+|\theta_2|^3)
\end{align*}
with $c_1\neq c_2$. Doing the same change of variables and using $dU(0)=I_2$, we get
\begin{align*}
\Imm \phi_1\circ U^{-1}(u)&=\kappa_1 u_1\\
\Imm \phi_2\circ U^{-1}(u)&=\kappa_2(u_1+a_2'u_1^2+(b_2-b_1)u_1u_2+(c_2-c_1)u_2^2)+K(u_1,u_2)
\end{align*}
with $K(u_1,u_2)=O(|u_1|^3+|u_2|^3)$. We apply the parametrized Morse lemma to $\Imm \phi_2\circ U^{-1}$ to get a diffeomorphism $V: O'_2\to O_3,$ $V(u_1,u_2)=(u_1,V_2(u_1,u_2))$ and a $\mathcal C^1$-map
$h:\mathbb R\to\mathbb R$ such that, for all $v\in O_3,$ 
\begin{align*}
    \Imm \phi_2\circ(V\circ U)^{-1}(v)&=\kappa_2(v_2^2+h(v_1))\\
    \Imm \phi_1\circ(V\circ U)^{-1}(v)&=\kappa_1 v_1.
\end{align*}
We then conclude as above, using only the imaginary parts $\Imm\phi_1$ and $\Imm\phi_2$, that
$$\lambda_2\big(V\circ U(E(\ovd)\cap O)\big)\lesssim\delta_1\delta_2^{1/2}\leq \delta_1^{3/4}\delta_2^{3/4}$$
\end{proof}

Let us finish this subsection with two examples that illustrate our results.

\begin{example}
Let $\phi(z)=\big((z_1+z_2)/2,(z_1+z_2)/2\big).$ Then for all $\beta\geq -1,$ $C_\phi$ maps $A_\beta^2(\DD^2)$ into $A_{\beta+\frac14}(\DD^2).$
\end{example}
\begin{proof}
    Indeed, let $\xi\in\TT^2$ be such that $\phi(\xi)\in\TT^2.$ By symmetry we may assume $\xi=e.$ Then for $j=1,2,$
\begin{align*}
    \Imm(\phi_j)&=\frac12(\theta_1+\theta_2)+O(\theta_1^2+\theta_2^2)\\
    \Ree(\phi_j)&=1-\frac 14(\theta_1^2+\theta_2^2)+O(|\theta_1|^3+|\theta_2|^3).
\end{align*}
Therefore, $s(\phi,e)=2$ and the result follows from Theorem \ref{thm:beta1/4}.
\end{proof}

For our second example, we recall that, for $\veps\geq 0,$ the map $F_\veps$ defined on $\CC$ by
$$F_\veps(z)=\frac{3+6z-z^2}8+2i\veps(z-1)^2-i\veps(z-1)^2$$
is a holomorphic self-map of $\DD$ provided $\veps\in[0,\veps_1)$ for
some $\veps_1>0.$

\begin{example}
Let $\beta\geq -1,$ $a_1,a_2\in[0,\veps_1)$ and let 
$$\phi_{a_1,a_2}(z)=(F_{a_1}(z_1)F_{a_1}(z_1),F_{a_2}(z_2)F_{a_2}(z_2)).$$
Then $C_{\phi_{a_1,a_2}}$ maps $A_\beta^2(\DD^2)$ into $A_{\beta+\frac 14}^2(\DD^2)$
if and only if $a_1\neq a_2.$
\end{example}
\begin{proof}
Observe first that for $\xi\in\TT^2,$ $\phi_{a_1,a_2}(\xi)\in\TT^2\iff \xi=e.$
We may write, for $j\in\{1,2\},$
\begin{align*}
    \Imm \phi_j(\theta)&=\frac12(\theta_1+\theta_2)-a_j(\theta_1^2+\theta_2^2)+O(|\theta_1|^3+|\theta_2|^3)\\
    \Ree \phi_j(\theta)&=1-\frac 18(\theta_1+\theta_2)^2+O(|\theta_1|^3+|\theta_2|^3).
\end{align*}
Therefore, $s(\phi_{a_1,a_2},e)=1$ and it is easy to check that $r(\phi_{a_1,a_2},e)=1$ if and only if $a_1\neq a_2.$
\end{proof}

\bibliographystyle{amsplain} 
\bibliography{ref} 

\providecommand{\bysame}{\leavevmode\hbox to3em{\hrulefill}\thinspace}
\providecommand{\MR}{\relax\ifhmode\unskip\space\fi MR }
\providecommand{\MRhref}[2]{%
  \href{http://www.ams.org/mathscinet-getitem?mr=#1}{#2}
}
\providecommand{\href}[2]{#2}
\begin{thebibliography}{10}

\bibitem{BAYPOLY}
F.~Bayart, \emph{Composition operators on the polydisk induced by affine maps},
  J. Funct. Anal. \textbf{260} (2011), 1969--2003.

\bibitem{Baytridisc}
\bysame, \emph{{Composition operators on the Hardy space of the tridisc}},
  Indiana U. Math. J. \textbf{to appear} (2024), arXiv:2312.02565.

\bibitem{Bes25b}
A.~Beslikas, \emph{{Composition operators and Rational Inner Functions on the
  bidisc}}, Proc. Am. Math. Soc. \textbf{153} (2025), 3491--3502,
  arXiv:2412.16593.

\bibitem{Bes26}
\bysame, \emph{{Composition operators and Rational Inner Functions on the
  bidisc II: boundedness between two different Bergman spaces}}, preprint
  (2025), arXiv:2509.04366.

\bibitem{Bes25a}
\bysame, \emph{A note on composition operators on the disc and bidisc},
  Canadian Math. Bull. \textbf{68} (2025), 1163--1176.

\bibitem{BG92}
J.~W. Bruce and P.~J. Giblin, \emph{Curves and {S}ingularities: a geometrical
  introduction to singularity theory}, Cambridge University press, 1992.

\bibitem{CCS25}
T.G. Clos, Ž. Čučković, and S~Şahutoğlu, \emph{{Compactness of
  composition operators on the Bergman space of the bidisc}}, Integr. Equ.
  Oper. Theory \textbf{97} (2025), 21.

\bibitem{Jaf90}
F.~Jafari, \emph{On bounded and compact composition operators in polydiscs},
  Can. J. Math \textbf{42} (1990), 869--889.

\bibitem{Jaf91}
\bysame, \emph{{Carleson measures in Hardy and weighted Bergman spaces of
  polydiscs}}, Proc. Amer. Math. Soc. \textbf{112} (1991), 771--781.

\bibitem{KL25}
H.~Koo and S-Y. Li, \emph{Composition operators on the bidisc}, J. Math. Anal.
  Appli \textbf{554} (2025), 129902.

\bibitem{KSZ08}
H.~Koo, M.~Stessin, and K.~Zhu, \emph{Composition operators on the polydisc
  induced by smooth symbols}, J. Funct. Anal. \textbf{254} (2008), 2911--2925.

\bibitem{Ko22}
L~Kosiński, \emph{{Composition operators on the polydisc}}, J. Funct. Anal.
  \textbf{284} (2023), 109801.

\bibitem{SZ06b}
M.~Stessin and K.~Zhu, \emph{Composition operators induced by symbols defined
  on a polydisk}, J. Math. Anal. Appl. \textbf{319} (2006), 815--829.

\bibitem{SZ06a}
\bysame, \emph{{Composition operators on embedded disks}}, J. Op. Th.
  \textbf{56} (2006), 423--449.

\end{thebibliography}

\end{document}